\providecommand{\norm}[1]{\lVert#1\rVert}
\theoremstyle{plain}
\newtheorem{theorem}{Theorem}[section]
\newtheorem{remark}[theorem]{Remark}
\newtheorem{proposition}[theorem]{Proposition}
\newtheorem{lemma}[theorem]{Lemma}
\newtheorem{corollary}[theorem]{Corollary}
\newtheorem{definition}[theorem]{Definition}
\newtheorem*{claim}{Claim}
\newtheorem*{lemma*}{Lemma}
\theoremstyle{definition}
\newtheorem{example}[theorem]{Example}
\newcommand{\labeltext}[2]{%
  \@bsphack
  \csname phantomsection\endcsname 
  \def\@currentlabel{#1}{\label{#2}}%
  \@esphack
}
\newfont\bbf{msbm10 at 12pt}
\def\R{{\mathbb R}}
\def\N{{\mathbb N}}
\def\Z{{\mathcal Z}}
\def\B{{\mathcal B}}
\def\P{{\mathcal P}}
\def\H{{\mathcal H}}
\def\A{{\mathcal A}}
\def\U{{\mathcal U}}
\def\V{{\mathcal V}}
\def\L{{\mathcal L}}
\def\Q{{\mathcal Q}}
\def\Y{{\mathcal Y}}
\def\es{{\emptyset}}
\def\1{\ensuremath{{\mathbbm{1}}}}
\def\Tau{{\mathcal T}}
\def\supp{\mbox{\rm supp}}
\def\diam{\mbox{\rm diam} }
\def\le{\leqslant}
\def\ge{\geqslant}
\newcommand{\I}{\mathcal{I}}
\newcommand{\hU}{\mathring{U}}
\newcommand{\Lp}{\mathcal{L}}
\newcommand{\tmu}{\tilde{\mu}}
\newcommand{\hLp}{\mathring{\Lp}}
\newcommand{\hf}{\mathring{f}}
\newcommand{\E}{\mathbb{E}}
\newcommand{\beq}{\begin{equation}}
\newcommand{\eeq}{\end{equation}}
\renewcommand{\i}{\mathbf{i}}
\renewcommand{\j}{\mathbf{j}}
\renewcommand{\k}{\mathbf{k}}
\newcommand{\md}{\dim_\textup{M}}
\newcommand{\hd}{\dim_\textup{H}}
\DeclareMathOperator*{\essinf}{ess\,inf}
\newcommand{\vertiii}[1]{{\left\vert\kern-0.25ex\left\vert\kern-0.25ex\left\vert #1 
    \right\vert\kern-0.25ex\right\vert\kern-0.25ex\right\vert}}
\newcommand{\invertiii}[1]{{\vert\kern-0.25ex\vert\kern-0.25ex\vert #1 
    \vert\kern-0.25ex\vert\kern-0.25ex\vert}}
\numberwithin{equation}{section}
\begin{document}

\title{Cover times in dynamical systems}
\date{\today}

\subjclass[2020]{37A25, 37E05, 37C30, 37D25, 37D35} 
\keywords{Cover time, Hitting time}

\author[N.~Jurga]{Natalia Jurga}
\address{Mathematical Institute\\
University of St Andrews\\
North Haugh\\
St Andrews\\
KY16 9SS\\
Scotland} 
\email{naj1@st-andrews.ac.uk}
\urladdr{https://nataliajurga.wordpress.com/}

\author[M.~Todd]{Mike Todd}
\address{Mathematical Institute\\
University of St Andrews\\
North Haugh\\
St Andrews\\
KY16 9SS\\
Scotland} 
\email{m.todd@st-andrews.ac.uk}
\urladdr{https://mtoddm.github.io/}

\thanks{NJ was supported by an EPSRC Standard Grant (EP/R015104/1) and Leverhulme Early Career Fellowship (ECF-2021-385). MT was partially supported by the FCT (Funda\c{c}ão para a Ciência e a Tecnologia) 
project 2022.07167.PTDC. The authors thank the referee for useful comments, which significantly improved the paper.}

\begin{abstract}
We introduce the notion of cover time to dynamical systems.  This quantifies the rate at which orbits become dense in the state space and can be viewed as a global, rather than the more standard local, notion of recurrence for a system. Using transfer operator tools for systems with holes and inducing techniques, we obtain an asymptotic formula for the expected cover time in terms of the decay rate of the measure of the ball of minimum measure.   We apply this to a wide class of uniformly hyperbolic and non-uniformly hyperbolic interval maps, including the Gauss map and Manneville-Pomeau maps.
\end{abstract}

\maketitle

\section{Introduction and summary of results}

Let $X \subseteq Y$ be a subset of a compact metric space $(Y,d)$ and consider a map $f: X \to X$ equipped with an ergodic probability measure $\mu$. In this article we introduce and study the \emph{cover time} of the system $(f,\mu)$ which, roughly speaking, quantifies the rate at which orbits become dense in $X$. More precisely, given $\delta>0$ we define the $\delta$-covering time $\tau_\delta: X \to \N\cup \{\infty\}$,
\begin{equation}
\tau_\delta(x):= \inf\left\{n \ge 1\;:\; \{x, f(x), \ldots, f^n(x)\}\; \textnormal{is $\delta$-dense in $X$}\right\}
\end{equation}
where we say that a subset $Z' \subset Z$ is $\delta$-dense in $Z$ if for all $x \in Z$ there exists $x' \in Z'$ such that $d(x,x')\le \delta$.
We will be interested in the asymptotic behaviour of the expected cover time with respect to $\mu$, $\E_\mu (\tau_\delta)$ as $\delta \to 0$.

In probability, analogous notions to the cover time have a rich history of study, such as in the setting of random walks on graphs \cite{aldous-graph,lovasz,dlp} and Markov chains \cite{peres}, as well as being extensively studied  in stochastic geometry \cite{flatto, janson,penrose, aldous}. Moreover, applications are numerous and include wireless communications \cite{wireless} and genomics \cite{genomics}.  However the cover time, despite being a very natural dynamical notion to study in connection with other better understood notions of recurrence such as hitting and return times, has not yet been addressed in the field of dynamical systems. We briefly describe an application which illustrates the utility of obtaining estimates on the cover time. Suppose $f:\Lambda \to \Lambda$ is an expanding repeller. Then the expected cover time $N=\E_\mu(\tau_\delta)$  describes how long one would should expect to wait for an orbit segment $\{x,f(x), \ldots, f^N(x)\}$ to produce a $\delta$-approximation of the repeller, which would be useful for an efficient computation of its image. 

It is an interesting open programme to investigate what types of asymptotic behaviour are feasible for the expected cover time and to characterise these in terms of the basic properties of the dynamical system. We point to similar lines of research in the setting of random walks on graphs, such as investigation into general lower and upper bounds on the expected cover time in terms of the number of vertices in the graph and a conjecture that the smallest expected cover time should be attained by complete graphs \cite{lovasz}. In this paper we make fundamental progress towards this general goal by obtaining an asymptotic formula for the expected cover time for a wide class of uniformly and non-uniformly hyperbolic systems. 

 Define
\begin{equation}\label{minmeas}
M_\mu(\delta)=\min_{x \in \mathrm{supp}(\mu)} \mu(B(x,\delta))
\end{equation}
where $B(x,\delta)$ denotes an open ball, noting that the minimum exists by compactness of $\mathrm{supp}(\mu)$ and lower semicontinuity of $x \mapsto \mu(B(x,\delta))$. For a wide class of one-dimensional systems we will show that $\E_\mu(\tau_\delta)$ scales roughly like $M_\mu(\delta)^{-1}$ as $\delta \to 0$. In particular, if we let $\md \mu$ denote the Minkowski dimension of $\mu$
\begin{equation}
\md \mu= \lim_{\delta \to 0} \frac{\log M_\delta(\mu)}{\log \delta},\label{md}
\end{equation}
which was introduced in \cite{ffk}\footnote{This differs from the definition for the Minkowski dimension which was given in \cite{ffk}, but the proof that \eqref{md} is an equivalent characterisation can be found in \cite[Lemma 1.1]{bjk}. Note that the limit in \eqref{md} will exist for the measures considered in this paper, however this is not the general case. We also note that this definition does not agree with the definition of the Minkowski dimension given by Pesin \cite[\S 7]{pesin} and that some authors would refer to \ref{md} as the $L^{-\infty}$ dimension (see e.g. \cite[Proposition 4.2]{ffk}).} then our main result says that if $\md \mu<\infty$ then $\E_\mu(\tau_\delta)$ scales roughly like $\delta^{-\md \mu}$ as $\delta \to 0$. This is in correspondence with the asymptotic behaviour of the expected hitting time $\E_\mu(\tau_{B(x,\delta)})$ (see \eqref{hitting}) to a shrinking ball centred at some $x \in X$ which grows like $O(\delta^{-D_\mu(x)})$ as $\delta \to 0$ where $D_\mu(x)$ denotes the local dimension of $\mu$ at $x$. In other words, while hitting times are governed by \emph{local} scaling properties of the measure $\mu$, cover times are governed by the scaling properties of the (globally determined) ball of minimum measure. Our setup also gives rise to natural examples of systems which cover the state space at a rate slower than $\delta^{-\alpha}$ (for any $\alpha>0$), namely where the expected cover time $\E_\mu(\tau_\delta)$ has an exponential dependence on $1/\delta$, see \S \ref{examples}. We obtain more precise bounds on $\E_\mu(\tau_\delta)$, which can be found in Theorems~\ref{cover} and \ref{cover2} in the uniformly hyperbolic case and in Theorems~\ref{cover-induced} and \ref{cover-induced2} in the non-uniformly hyperbolic case. We remark that the subexponential terms in the asymptotic expression for the expected cover time are interesting since they capture how nuanced information about the system plays a role in the expected cover time, such as how many balls of radius $\delta$ and measure $\approx M_\mu(\delta)$ there are.

In \cite{JurMor20, bjk} the cover time of the chaos game was studied, which is a random algorithm that was introduced by Barnsley \cite[Chapter 3]{BarnsleyBook} for constructing the attractors of iterated function systems. Assuming the iterated function system satisfies an appropriate separation condition, this is analogous to studying $\E_\mu(\tau_\delta)$ for a system $(f,\mu)$, where $f$ is necessarily a full and finite branched uniformly expanding map (constructed using the inverses of the contractions belonging to the iterated function system) and $\mu$ is the stationary measure associated to the chaos game. In the current paper we study the cover time from a dynamical viewpoint and in particular we study systems which are well beyond the scope of \cite{bjk} such as systems which are non-uniformly hyperbolic (subexponentially mixing), are not full branched, or have infinitely many branches. In particular, our results give partial answers to \cite[Questions 6.3 and 6.4]{bjk}.

The cover time $\tau_\delta$ is closely related to the hitting time $\tau_U :X \to \N\cup \{\infty\}$
\begin{equation} \label{hitting}
\tau_U(x)=\inf\{n \ge 1: f^n(x) \in U\}
\end{equation}
 for $U=B(x,\delta)$ where $x \in X$. For background on hitting time statistics in dynamical systems see for example \cite{Sau09}, \cite[Chapter 5]{EVbook}. In the uniformly hyperbolic case, a key tool in this paper will be to adapt the tools based on the spectral theory of transfer operators for dynamical systems with a family of holes shrinking to a point \cite{BruDemTod18}, to instead give bounds on the expected hitting time $\E_\mu(\tau_U)$ over a family $\U_\delta$ of holes $U=B(x,\delta)$ centred at $x \in X$ and which cover $X$.  We have to do this in a uniform way, which is a significant technical challenge, particularly when dealing with the `short returns' which requires a new symbolic argument given in \S\ref{section:return}, which restricts the geometry of our setup (see \eqref{eq:ratios} below). Following this, we will adapt an approach of Matthews \cite{matthews}, based on randomising the order in which balls $U$ are visited, which will allow us to express the expected cover time in terms of $\max_{U} \E_\mu (\tau_U)$, where the maximum is taken over balls $U=B(x,\delta)$ with $x \in \Lambda$. Finally, given a non-uniformly hyperbolic system $(f,\mu)$ we will consider a suitable first return map $F=f^{\tau_Y}$ which is uniformly hyperbolic, and show that the cover time for the original system $(f,\mu)$ can be estimated in terms of the cover time of the induced system $(F, \frac{1}{\mu(Y)}\mu|_Y)$.

\textbf{Notation.} We say $x=O(y)$ if there exists $C>0$ such that $|x|\le C|y|$.  We will also allow the constants $C,c>0$ to take different values throughout the paper: they will indicate that the bounds are uniform in the context in which they are used.

{\bf Organisation.} In \S \ref{section:uniform}  we state the main theorems in the uniformly expanding case (Theorems \ref{cover} and \ref{cover2}) and study the transfer operators which will be used to obtain estimates on the expected hitting times in this setting. In \S \ref{section:return} we obtain estimates on expected return times, which are then used in \S \ref{section:hit} to obtain estimates on expected hitting times. In \S \ref{section:matthews} we prove Theorems \ref{cover} and \ref{cover2} by obtaining a dynamical generalisation of Matthews' approach \cite{matthews} to express the expected cover time in terms of the expected hitting times. In \S \ref{section:nonuniform} we state the main theorems in the non-uniformly expanding case (Theorems \ref{cover-induced} and \ref{cover-induced2}) and use an inducing argument to prove it.  In \S~\ref{examples} we give applications of our results, both in the uniformly hyperbolic and non-uniformly hyperbolic cases. This includes applications to Gibbs-Markov maps, the Gauss map and Manneville-Pomeau systems as well as examples of slowly covering systems.  We postpone the proofs of Propositions~\ref{prop:u} and \ref{prop:u'} to the appendix: these guarantee that for all $\delta$ sufficiently small, we can find a family of balls of radius roughly $\delta$, each of whose expected hitting time can be studied by our transfer operator approach.

\section{Setup and results for the exponentially mixing case}\label{section:uniform}

In this section we introduce a class of maps and potentials which lead to the system being exponentially mixing, and state the main results in this setting. Furthermore we introduce the transfer operators which will be used to estimate hitting times. We establish the uniform spectral properties for these operators which will be required to obtain uniform estimates on expected hitting times.  From here on, all of our dynamical systems will be interval maps, which we formalise next.

Let $I \subset \R$ be a bounded interval and $\mathcal{Z}=\{Z_i\}_{i\in \I}$ be a finite or countable collection of subintervals of $I$ with disjoint interiors.  Let $f:\bigcup_{n \in \I}Z_n\to I$ be continuous and strictly monotonic on each $Z_n$. We denote $f_i:=f|_{Z_i}$.  We call an interval $H\subset I$ on which $f$ is not defined a \emph{hole}\footnote{These initially refer to the holes present in the original dynamics, but later this idea will be also used when we will investigate hitting times via an open dynamical systems perspective, where an extra hole is introduced.}, for example when there is a gap between $Z_i$s. Write $D=I\setminus \bigcup_i Z_i$.  We denote 
$$\Lambda:=\left\{x\in I: f^k(x) \in \bigcup_{n \in \I} Z_n \text{ for all } k\ge 0\right\}$$
to be the repeller of $f$ and study the dynamical system $f:\Lambda \to \Lambda$. In our notation we may sometimes suppress the fact that we are restricting the dynamics to $\Lambda$, for instance by writing $f(U)$ for some $U$ which is not a subset of $\Lambda$ (for example $U$ may be an interval, while $\Lambda$ may be a topological Cantor set); this is implicitly understood as $f(U)=f(\Lambda \cap U)$. In many cases we consider, $I= \bigcup_{n \in \I} Z_n$ (in particular there are no holes), in which case $\Lambda=I$. Let $\mathcal{Z}^n$ be the set \emph{$n$-cylinders}, i.e., of maximal intervals $Z$ of $I$ such that $f^k(Z)\subset Z_{i_k}$ for some $Z_{i_k}\in \mathcal{Z}$ for $k=0, \ldots, n-1$.

We now outline our basic assumptions on our map $f$ and invariant measure $\mu$.

\subsection{Basic assumptions on $f$ and $\mu$} \label{props}

We assume there exists a nonatomic Borel probability measure $m$ such that $m(D)=0$ and $m$ is conformal\footnote{From a thermodynamic formalism point of view, our assumptions imply that the pressure of $\phi$ is zero.} with respect to a potential $\phi:\Lambda \to \R$, i.e. $\frac{dm}{d(m\circ f)}=e^\phi$.  We set $\phi|_D=-\infty$ and write $S_n\phi=\sum_{i=0}^{n-1}\phi\circ f^i$. Let 
\begin{equation} \label{op}
\L \psi(x):= \sum_{y \in f^{-1}x} \psi(y)e^{\phi(y)}
\end{equation}
for $\psi \in L^1(\Lambda,m)$, (we will actually consider $\Lp$ acting on functions of bounded variation, see Section~\ref{ssec:trans_closed}). We assume $\phi$ satisfies the following regularity properties as in \cite[Section 2]{BruDemTod18}: 
\begin{enumerate}
\item [(a1)]\labeltext{(a1)}{a1} $\exists C_d>0$ such that $|e^{S_n\phi(x)-S_n\phi(y)}-1| \le C_d|f^n(x)-f^n(y)|$ whenever $f^i(x), f^i(y)$ lie in the same element of $\Z$ for each $i=0,1, \ldots, n-1$;
\item  [(a2)]\labeltext{(a2)}{a2}   $\exists n_0 \in \N$ such that $\sup_\Lambda e^{S_{n_0} \phi}<\inf_{\Lambda\setminus D} \L^{n_0} 1$;
\item  [(a3)]\labeltext{(a3)}{a3}  For each $x \in \Lambda$ and $\delta>0$ such that $J=B(x,\delta)$ has the property that $J\cap D=\emptyset$, $\exists N=N(J)$ such that $\inf_{\Lambda \setminus D} \L^N \1_{J\cap \Lambda}>0$.
\end{enumerate}
Note that \ref{a3} implies $\supp(m)=\Lambda$.  
By conformality of $m$, $\int \Lp^n1dm=\int 1dm=1$ so that $\inf_{\Lambda\setminus D} \Lp^n1 \le 1$ for all $n \in \N$. Hence by \ref{a2}, $\sup_\Lambda e^{S_{n_0}\phi}<1$. By this and conformality of $m$:
\begin{equation} \text{there exist } C>0, \omega>0 \text{ such that any $n$-cylinder has measure } \le Ce^{-\omega n}.
\label{eq:cyls}
\end{equation}
 Since $\sup_\Lambda e^{S_{n_0}\phi}<1$ and since $\sup_\Lambda e^{S_n\phi}$ is submultiplicative, we have that
\begin{equation} \label{finite-sum}
\sum_{n=1}^\infty \sup_\Lambda e^{S_n\phi}< \infty.
\end{equation}
By the same reasoning
\begin{equation}\label{n1}
\exists n_1 \in \N\; \textnormal{such that} \; (2n_1+5)(C_d+1)\sup_\Lambda e^{S_{n_1}\phi}<1.
\end{equation}

We will also assume that
\begin{enumerate}
\item [(a4)]\labeltext{(a4)}{a4}  $\exists c_m>0$ such that $\inf_{Z \in \Z^{n_1}} m(f^{n_1}(Z)) \ge c_m$.
\end{enumerate}
Along with the cylinder structure, this implies that for all $1\le i \le n_1$,
\begin{equation}
\inf_{Z \in \Z^i} m(f^i(Z)) \ge \inf_{Z \in \Z^{n_1}} m(f^{n_1}(Z)) \ge c_m.
\label{a4i}
\end{equation}
By \ref{a1}, $\sup_{Z} e^\phi \le (1+C_d)m(Z)/m(f(Z))$ so applying \eqref{a4i} with $i=1$ yields
\begin{equation} \label{eq:a2'}
\sum_{Z \in \Z} \sup_Z e^\phi \le (1+C_d) \sum_{Z \in \Z} \frac{m(Z)}{m(f(Z))} \le (1+C_d)c_m^{-1}<\infty.
\end{equation}

As described in \cite[Section 2.1]{BruDemTod18}, under \ref{a1}--\ref{a4} (which as shown above imply (F1)--(F4) in that paper) we can apply \cite[Theorem 1]{rychlik} to show that $\Lp$ admits a unique invariant measure $\mu$ which is absolutely continuous with respect to $m$ and whose density $g$ is bounded away from 0 and of bounded variation. Moreover the system is exponentially mixing.  By this and by conformality of $m$, \eqref{finite-sum} implies
\begin{equation} \label{K}
K:=1+\sum_{n=1}^\infty \sup_{Z \in \Z^n} \mu(Z)<\infty.
\end{equation}

It will sometimes be more convenient to work with the symbolic coding of $f$. Recall that the index set $\I$ labels the intervals of monotonicity $\{Z_i\}_{i\in \I}$ for $f$. Let $\Sigma \subset \I^\N$ be the subshift given by the set of sequences $\i=(i_0, i_1, \ldots) \in \I^{\N_0}$  for which there exists an $x \in \Lambda$ such that $f^{n}(x) \in Z_{i_n}$ for all $n \in \N_0$. Define the projection $\Pi:\Sigma \to I$ as
$$\Pi(i_0, i_1,  \cdots) :=\lim_{n \to \infty} f_{i_0}^{-1} \circ \cdots \circ f_{i_{n-1}}^{-1}(I),$$
i.e. $\Pi(\i)=x$. Then $f\circ \Pi=\Pi \circ \sigma$ where $\sigma: \Sigma \to \Sigma$ is the left shift map. We let $\Sigma_n$ denote the set of all words of length $n$ in $\Sigma$ and $\Sigma^*$ denote the set of all finite words in $\Sigma$. Given $w \in \Sigma^*$ we write $[w]:=\{\i \in \Sigma: \i|n=w\}$ where $\i|n=(i_0, \ldots, i_{n-1})$ if $\i=(i_n)_{n \in \N_0}$ (sometimes for brevity we will write $w= i_0\ldots i_{n-1}$). We write $\tmu$ to be the measure on $\Sigma$ such that $\mu=\Pi_*\tmu$.

We will require two additional assumptions on $\tmu$:
\begin{enumerate}
\item [(a5)]\labeltext{(a5)}{a5}  $\tmu$ is  \emph{quasi-Bernoulli}; i.e. there exists $C_*>1$ such that for all finite words $\i,\j \in \Sigma^*$, $C_*^{-1}\tmu([\i])\tmu([\j]) \le \tilde\mu([\i\j]) \le C_* \tmu([\i])\tmu([\j])$;
\item[(a6)]\labeltext{(a6)}{a6}  $\tmu$ is \emph{$\psi$-mixing}, i.e. $$\left|\frac{\tmu\left(\cup [x_0, \ldots, x_{i-1}, y_0, \ldots, y_{j-1}, z_0, \ldots, z_{\ell-1}]\right)}{\tmu\left([x_0, \ldots, x_{i-1}]\right) \tmu\left([z_0, \ldots, z_{\ell-1}]\right)} -1\right| \le \gamma(j)$$
where the union is taken over all words $y_0 \ldots y_{j-1}$ of length $j$ such that \\$(x_0, \ldots, x_{i-1}, y_0, \ldots, y_{j-1}, z_0, \ldots, z_{\ell-1})\in \Sigma_{i+j+\ell}$, and $\gamma(j)\to 0$ as $ j\to \infty$.
\end{enumerate}

\subsection{Gibbs-Markov maps}\label{GM}
In the case where $f:\Lambda \to \Lambda$ is Markov, i.e. for each $Z\in \Z$,  $f(Z)$ is a union of elements of $\Z$, then the class of maps satisfying \ref{a1}-\ref{a6} is precisely  the class of Gibbs-Markov maps with the big images and pre-images (BIP) property. These maps and their properties are discussed in more detail in \cite[Section 2.6.2]{BruDemTod18}, but we give a brief account here. 

We say that $f$ satisfies the \emph{big images and pre-images (BIP) property} if there exists a finite set
$\{ Z_j \}_{j \in \mathcal{J}} \subset \Z$ such that 
$\forall Z \in \Z$, $\exists j, k \in \mathcal{J}$ such that
$f(Z_j) \supseteq Z$ and $f(Z) \supseteq Z_k$.
We also assume that $|Df| \ge \gamma^{-1} > 1$ on each $Z \in \Z$. We say that an invariant measure $\mu$ (or $\tilde{\mu}$) is \emph{Gibbs} if there exists a Lipschitz continuous potential $\phi$ (i.e., with some uniform Lipschitz constants such that on each $Z\in \Z$, $\phi$ is Lipschitz) and constants $K, P$ such that for all $i_0, \ldots, i_{n-1} \in \Sigma_n$ and $\i \in [i_0, \ldots, i_{n-1}]$,
$$K^{-1}\le \frac{\tilde{\mu}([i_0,\ldots ,i_{n-1}])}{e^{S_n\phi(\Pi(\i))+nP}}\le K.$$

We say that $(f,\mu)$ is Gibbs-Markov with BIP if $f$ is Markov and satisfies BIP and $\mu$ is Gibbs. One can readily check that such maps satisfy \ref{a1}-\ref{a6}. Indeed, as in \cite[Section 2.6.2]{BruDemTod18}, these maps satisfy \ref{a1}-\ref{a3}. Also \ref{a4} follows from the big images property and Markov property; \ref{a5} follows because Gibbs measures $\tilde{\mu}$ are quasi-Bernoulli. Finally \ref{a6} holds since Gibbs measures $\tilde{\mu}$ are $\psi$-mixing \cite{Bra83}. 

On the other hand, suppose $f$ is Markov and satisfies \ref{a1}-\ref{a4}. Then $\mu$ is necessarily a Gibbs measure. Moreover by \cite{Sar03}, $f$ must satisfy BIP. In other words, the class of Markov maps satisfying \ref{a1}-\ref{a6} coincides with the class of Gibbs Markov map with BIP.  We call $f_{i}=f|_{Z_i}$ the \emph{branch} of $f$ at $Z_i$; if $f(Z_i) = I$ then we say that $f$ has a \emph{full branch} on $Z_i$.  The simplest BIP examples are when all branches of $f$ are full.

\subsection{Other assumptions on $f$ and $m$.}
Before we state our main theorems, we will need to introduce further assumptions on the system.

Firstly, we assume that for some constant $C_m<\infty$ and $s>0$,
\begin{equation} \label{measdiam}
m(U) \le C_m \diam(U)^s \hspace{0.5cm} \textnormal{for all intervals $U \subset I$} .
\end{equation}

We will also require the derivatives of iterates of $f$ to satisfy bounded distortion: there exists a constant $C_{bd}>0$ such that for all $x, y \in \Lambda$ for which $f^i(x),f^i(y)$ are in the same cylinder $Z$ for each $0 \le i \le n-1$ then
\begin{equation}
C_{bd}^{-1}\le \frac{|(f^i)'(x)|}{|(f^i)'(y)|} \le C_{bd}.
\label{bd}
\end{equation}
$f$ satisfies \eqref{bd} for example when $f$ is uniformly expanding and has finitely many $C^{1+\alpha}$ branches. Similarly, when $f$ is uniformly expanding, has infinitely many $C^{1+\alpha}$ branches and there is a uniform upper bound bound on the H\"older constants associated to the first order derivative of each branch, $f$ also satisfies \eqref{bd}. 

Next, in order that $f:I \to I$ can be sufficiently well approximated by the symbolic representation of the system, we require adjacent cylinders to be comparable.  That is there is $C>0$ such that given $Z, Z'\in \Z^n$ where $Z, Z'\subset Z_{n-1}\in \Z^{n-1}$ which are \emph{adjacent}, meaning that there is an interval $A\subset Z_{n-1}$ such that $Z$ and $Z'$ are the only elements of $\Z^n$ intersecting $A$, 
\begin{equation}
\frac1C\le \frac{\mu(Z)}{\mu(Z')} \le C.
\label{eq:ratios}
\end{equation}
Note that here we are assuming that given $Z_{n-1}\in \Z^{n-1}$, each $Z\in \Z^n$ in $Z_{n-1}$ has an adjacent $Z'\in \Z^n$ in $Z_{n-1}$, which implies that if domains of $\Z^n$  accumulate in $Z_{n-1}\in \Z^{n-1}$, then this must occur at the boundary of $Z_{n-1}$.  Observe that it is sufficient to check this assumption for elements of $\Z^1$, where $I$ is the corresponding element of $\Z^0$.

Finally, we will sometimes assume that $m$ is Ahlfors regular: namely that there exists $c>0$ such that for any $x \in \Lambda$ and $r>0$,
\begin{equation}
\frac{r^{s_f}}{c} \le m(B(x,r)) \le cr^{s_f}
\label{eq:m_scale}
\end{equation}
where $s_f=\hd \Lambda$. This holds if $m$ is Lebesgue, or more generally if $m$ is conformal for a potential $-s_f\log|Df|$ and, for example, $f$ is a finite branch Markov map\footnote{To see this, approximate the measure $m(B(x,r))$ from above and below by the measure of appropriate cylinders and apply conformality and \ref{a1} (or alternatively using assumption \eqref{bd}).}, in which case this is referred to as the \emph{Hausdorff measure}.

\subsection{Results}

We begin by stating our main theorem in the special case that $m$ satisfies \eqref{eq:m_scale}, e.g. if $f$ is Markov and $m$ is conformal for a potential $-s_f\log|Df|$.

\begin{theorem}\label{cover}
Assume $(f,\mu)$ satisfies \ref{a1}-\ref{a6} and \eqref{measdiam}--\eqref{eq:m_scale}.  There exist $0<c<C<\infty$
such that for all $\delta>0$,
$$c\delta^{-s_f} \le \E_\mu(\tau_\delta) \le C\delta^{-s_f}\log(1/\delta).$$
Moreover, if the system is Gibbs Markov and $f$ has at least 2 full branches then we also have the sharp lower bound:
$$c\delta^{-s_f}\log(1/\delta) \le \E_\mu(\tau_\delta) \le C\delta^{-s_f}\log(1/\delta).$$
\end{theorem}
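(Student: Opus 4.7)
The proof combines three ingredients: (i) uniform two-sided estimates $\E_\mu(\tau_{B(x,\delta)})\asymp\mu(B(x,\delta))^{-1}$ for hitting times of balls of radius $\delta$, to be established via punctured transfer operators in Sections~\ref{section:return}--\ref{section:hit}; (ii) the Ahlfors regularity assumption \eqref{eq:m_scale}, which gives $\mu(B(x,\delta))\asymp\delta^{s_f}$ uniformly in $x\in\Lambda$; and (iii) a dynamical Matthews inequality, implemented in Section~\ref{section:matthews}, which converts a family of hitting-time bounds into cover-time bounds.

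\textbf{Upper bound and the trivial lower bound.} First I would pick a maximal $\delta/2$-separated set $\{x_i\}_{i=1}^{N}\subset\Lambda$; the balls $B(x_i,\delta/2)$ then cover $\Lambda$ while the $B(x_i,\delta/4)$ are pairwise disjoint, and \eqref{eq:m_scale} gives $N\asymp\delta^{-s_f}$. Any orbit segment meeting every $B(x_i,\delta/2)$ is automatically $\delta$-dense in $\Lambda$, so $\tau_\delta$ is dominated by the time to hit all of these balls. Matthews' randomization (visit the balls in a uniformly random order) then yields
\[
\E_\mu(\tau_\delta)\le\Bigl(\sum_{k=1}^{N}\tfrac{1}{k}\Bigr)\max_{1\le i\le N}\E_\mu(\tau_{B(x_i,\delta/2)})\le C\log(1/\delta)\cdot\delta^{-s_f},
\]
using the uniform hitting estimate. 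For the generic lower bound, take $x_0$ realising $M_\mu(\delta)$ in \eqref{minmeas}; any $\delta$-dense orbit must meet $B(x_0,\delta)$, so $\tau_\delta\ge\tau_{B(x_0,\delta)}$ and hence $\E_\mu(\tau_\delta)\ge c\,M_\mu(\delta)^{-1}\asymp\delta^{-s_f}$.

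\textbf{Sharp lower bound in the Gibbs-Markov, two-full-branches case.} Here I would exploit the Markov/full-branch structure to exhibit $N\asymp\delta^{-s_f}$ pairwise disjoint cylinders of diameter $\asymp\delta$ and measure $\asymp\delta^{s_f}$, inside each of which a ball $B_j$ of radius $\sim\delta$ can be centred with enough separation that the associated hitting events are asymptotically independent via the $\psi$-mixing property \ref{a6}. Every $\delta$-dense orbit visits each $B_j$, so the matching direction of Matthews' inequality (the coupon-collector lower bound for $N$ asymptotically independent targets with common hit-rate $\asymp\delta^{s_f}$) delivers $\E_\mu(\tau_\delta)\ge c\,(\log N)\,\delta^{-s_f}\asymp\delta^{-s_f}\log(1/\delta)$. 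The requirement of two full branches is used to guarantee that one really has this many disjoint cylinders on the right scale and that the $\psi$-mixing input applies cleanly between them.

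\textbf{Main obstacle.} The hardest step is the uniform hitting-time asymptotic $\E_\mu(\tau_{B(x,\delta)})\asymp\mu(B(x,\delta))^{-1}$ with constants independent of the centre $x$. Adapting the spectral machinery of \cite{BruDemTod18} requires controlling how the leading eigenvalue of the punctured transfer operator depends on the location of the hole, which in turn forces one to estimate the contribution of short returns uniformly across the whole family of holes; this is exactly the role of the symbolic argument announced for Section~\ref{section:return} and of the geometric compatibility hypothesis \eqref{eq:ratios}. Once these uniform hitting estimates and the $\psi$-mixing-driven near-independence are available, the Matthews-type arguments give both cover-time directions with relatively routine book-keeping.
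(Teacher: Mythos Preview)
Your overall architecture---uniform two-sided hitting-time estimates via punctured transfer operators, Ahlfors regularity to convert measures to $\delta^{s_f}$, and a Matthews-type argument linking hitting times to cover times---is exactly the paper's strategy, and your identification of the uniform hitting-time bound (with its short-return analysis from Section~\ref{section:return} and the geometric hypothesis \eqref{eq:ratios}) as the main technical input is correct.

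The gap is in how you treat the Matthews step, particularly the sharp lower bound. For a Markov chain the strong Markov property gives, for free, that on the event $A_{s,k}$ the residual time $\tau_{s(k)}-\tau_{s(1)}^{s(k-1)}$ has conditional expectation comparable to the unconditional $\E(\tau_{s(k)})$. In a deterministic system there is no Markov property at the random stopping time $\tau_{s(1)}^{s(k-1)}$, so this decoupling has to be manufactured. The paper does this symbolically, using the quasi-Bernoulli property \ref{a5} (not $\psi$-mixing) to factor $\tilde\mu([\i\j])$ into $\tilde\mu([\i])\tilde\mu([\j])$ up to constants; for the upper bound this already costs an extra $O(\log(1/\delta))$ additive term coming from the overlap of cylinder words (the quantity $L$ in the proof). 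For the lower bound the decoupling is more delicate still: you need $\E_{\i_{s(i)}}(\tau_{s(k)})\ge c\,\E_{\tilde\mu}(\tau_{s(k)})$ \emph{uniformly}, and this fails for generic targets because of ``overshoot''---the word $\i_{s(k)}$ may appear inside $\i_{s(i)}$ or straddle its end. The paper's fix is not $\psi$-mixing but a carefully engineered family $\V_\delta$ (Proposition~\ref{prop:u'}): each target is a cylinder $[wab^{n_3}]$ with $w\in\{a,b\}^*$, and $n_3$ is chosen so that the total overshoot probability satisfies $r:=C_*^3\tilde\mu([ab^{n_3}])K<1$. This is what makes the conditional lower bound go through; merely having $N\asymp\delta^{-s_f}$ well-separated balls with $\psi$-mixing between them is not enough to recover the coupon-collector lower bound in this non-Markovian setting. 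Note also that the paper only produces $\#\V_\delta\ge c\delta^{-\varepsilon}$ for some small $\varepsilon>0$, which suffices since only $\log N\asymp\log(1/\delta)$ is needed.
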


The assumption that $f$ has at least 2 full branches allows us to find, for all $\delta>0$ sufficiently small, sufficiently many $\delta$-balls centred at a point in $\Lambda$ which have measure close to $M_\mu(\delta)$ and are `far' from each other in the sense that there is a good lower bound on the time in between consecutive visits to these balls. This is enough to provide a sharp lower bound, however we expect that there are weaker assumptions which would yield the same conclusion.

In the case where there is a sharp lower bound, one may wonder whether the limit
$$\lim_{\delta \to 0} \frac{\E(\tau_\delta)}{\delta^{-s_f} \log(1/\delta)}$$
exists. In the probability theory literature on cover times of random walks, the existence of the analogous limit has been established for some specific examples, such as for the expected cover time of a disk by a random walk in $\mathbb{Z}^2$ \cite{coverdisk} and the expected cover time of the binary tree by a simple random walk \cite{covertree}.

We also consider more general measures $\mu$ which may not satisfy \eqref{eq:m_scale}. Suppose $\md \mu<\infty$. For each $\delta>0$ we denote 
\begin{equation}\label{er}
\mathrm{Err}(\delta):= \left|\md \mu- \frac{\log \left(M_\mu(\delta))\right)}{\log \delta}\right|.
\end{equation}
Clearly $\md \mu<\infty$ implies $\lim_{\delta \to 0}\mathrm{Err}(\delta)=0$. Therefore in the case that $\md \mu<\infty$, one can think of $\delta^{-\md \mu \pm \mathrm{Err}(\delta)}$ as being upper and lower bounds on the measure of the ball of minimum measure at scale $\delta$.

\begin{theorem}\label{cover2}
Assume $(f,\mu)$ satisfies  \ref{a1}-\ref{a6} and \eqref{measdiam}--\eqref{eq:ratios}. There exist $0<c<C<\infty$ and $\varepsilon>0$ such that such that for all $\delta>0$,
\begin{equation} \label{eq:inf}
\frac{c}{M_\mu(\delta/\varepsilon)} \le \E_\mu(\tau_\delta) \le \frac{C}{M_\mu(\varepsilon\delta)}\log(1/\delta).
\end{equation}
In particular if $\md \mu<\infty$ then
\begin{equation}\label{eq:fin}
c\delta^{-\md \mu+\mathrm{Err}(\delta/\varepsilon)} \le \E_\mu(\tau_\delta) \le C\delta^{-\md \mu -\mathrm{Err}(\varepsilon \delta)}\log(1/\delta).
\end{equation}
\end{theorem}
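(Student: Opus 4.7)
The plan is to reduce Theorem~\ref{cover2} to two separate ingredients developed in the sections that follow: a uniform hitting-time estimate of the form $\E_\mu(\tau_{B(x,\delta)}) \asymp 1/\mu(B(x,\delta))$ for balls centred at points of $\Lambda$, produced by the open-system transfer operator spectral theory of \S\ref{section:hit}, together with a dynamical adaptation of Matthews' randomization bound developed in \S\ref{section:matthews}. Given these, both inequalities in \eqref{eq:inf} follow in short order, after which \eqref{eq:fin} is a tautological rewriting using the definition \eqref{er}.

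\emph{Lower bound.} Pick $y^* \in \supp(\mu)$ realising the minimum $\mu(B(y^*,\delta)) = M_\mu(\delta)$. If $\{x, f(x), \ldots, f^n(x)\}$ is $\delta$-dense, then for some $0 \leq k \leq n$ we have $f^k(x) \in \overline{B(y^*,\delta)}$, whence $\tau_\delta(x) \geq \tau_{\overline{B(y^*,\delta)}}(x) - 1$. Applying the hitting-time lower bound of \S\ref{section:hit} to a ball about $y^*$ whose radius is comparable to (but possibly a bounded multiple larger than) $\delta$, we get $\E_\mu(\tau_\delta) \geq c\,\E_\mu(\tau_{B(y^*,r)}) \geq c'/\mu(B(y^*,r)) \geq c'/M_\mu(\delta/\varepsilon)$. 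The factor $\varepsilon$ absorbs the slight adjustment of radius required so that the technical hypotheses of Propositions~\ref{prop:u}--\ref{prop:u'} can be applied at the chosen ball.

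\emph{Upper bound.} Since $\Lambda \subset \R$ is compact, we may cover $\Lambda$ by $N = O(\delta^{-1})$ balls $\U = \{U_i\}_{i=1}^N$ of radius $\delta/2$ centred at points of $\Lambda$. Visiting every $U_i$ forces the orbit segment to be $\delta$-dense, so $\tau_\delta \leq \tau_{\U}^{\mathrm{cov}}$, where $\tau_{\U}^{\mathrm{cov}}$ denotes the time at which every $U_i$ has been hit. Matthews' randomization trick — order the $N$ balls by a uniformly random permutation $\pi$, let $T_k$ be the time to have visited the first $k$ balls in that order, observe that by the symmetry in $\pi$ the probability that the $\pi(k)$-th ball has not yet been hit at time $T_{k-1}$ is $1/k$, and hence $\E_\mu(T_k - T_{k-1}) \leq \tfrac{1}{k}\max_i \E_\mu(\tau_{U_i})$ — then yields $\E_\mu(\tau_{\U}^{\mathrm{cov}}) \leq H_N \max_i \E_\mu(\tau_{U_i})$, with $H_N$ the $N$th harmonic number. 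The hitting-time upper bound gives $\max_i \E_\mu(\tau_{U_i}) \leq C/\mu(U_i) \leq C/M_\mu(\varepsilon\delta)$, and combining with $H_N = O(\log \delta^{-1})$ proves the upper half of \eqref{eq:inf}. Inequality \eqref{eq:fin} is then immediate from \eqref{eq:inf} by substituting $M_\mu(\eta) = \eta^{\md \mu \pm \mathrm{Err}(\eta)}$ and absorbing the harmless constants $\varepsilon^{\pm \md \mu}$.

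The main obstacle is not the assembly above but the production of the uniform hitting-time estimate, in which the implicit constants must be independent of both the centre $x$ and radius $\delta$ of the ball $B(x,\delta)$. This demands uniform spectral control of the family of open transfer operators indexed by the hole $U = B(x,\delta)$, and the delicate point is the quantitative treatment of short returns carried out in \S\ref{section:return}, which is precisely where the adjacency hypothesis \eqref{eq:ratios} is invoked. A secondary subtlety in the Matthews step is that the classical bound is stated in terms of $\max_{i,j}\E_i(\tau_j)$ over starting states, whereas here we control only $\max_U \E_\mu(\tau_U)$; bridging this gap uses $f$-invariance of $\mu$ together with the mixing already established in \S\ref{section:hit} to control hitting times from near-arbitrary starting distributions, at the cost of the harmless $\varepsilon$ factors in the radii.
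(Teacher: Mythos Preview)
Your overall strategy matches the paper's: the lower bound is the hitting time to a ball of near-minimum measure (the paper routes through $\U_\delta$ since Theorem~\ref{hit} is stated only for $U\in\U(\delta_3^*)$, which you correctly anticipate via the $\varepsilon$-adjustment), and the upper bound combines a Matthews randomization with Theorem~\ref{hit}.

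The genuine gap is in your Matthews step. You write that ``by the symmetry in $\pi$ the probability that the $\pi(k)$-th ball has not yet been hit at time $T_{k-1}$ is $1/k$, and hence $\E_\mu(T_k - T_{k-1}) \le \tfrac{1}{k}\max_i\E_\mu(\tau_{U_i})$.'' The $1/k$ is correct (this is exactly Lemma~\ref{A_average}), but ``hence'' does not follow: the classical Matthews bound restarts at the state occupied at time $T_{k-1}$ via the strong Markov property, which is unavailable here since the system is deterministic and the orbit's past genuinely conditions its future. Your closing appeal to ``$f$-invariance together with mixing'' is too vague to close this --- invariance alone does not decouple the residual hitting time from the conditioning event $A_{s,k}$, and the spectral-gap mixing of \S\ref{section:hit} does not directly yield a pointwise factorization either.

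The paper's actual mechanism is worth knowing because it explains two of the hypotheses. It passes to the symbolic model $(\Sigma,\sigma,\tilde\mu)$, replaces balls by the cylinder-unions supplied by \ref{Ue} (so each $P\in\mathcal P_\delta$ is a union of cylinders of depth $L=O(\log(1/\delta))$), and then uses the \emph{quasi-Bernoulli} property \ref{a5} to factor $\tilde\mu([\i\j])\le C_*\,\tilde\mu([\i])\,\tilde\mu([\j])$, where $\i$ encodes the history up to $T_{k-1}$ (i.e.\ membership in $A_{s,k}$) and $\j$ encodes the subsequent wait for $P_{s(k)}$. This yields
\[
\int_{A_{s,k}}\bigl(\tau_{s(k)}-\tau_{s(1)}^{s(k-1)}\bigr)\,d\tilde\mu \ \le\ C_*\,\tilde\mu(A_{s,k})\bigl(\E_{\tilde\mu}(\tau_{s(k)})+2L\bigr),
\]
the extra $2L$ being an overshoot correction for the offset between where a cylinder word begins and where it ends. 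Only after this does Lemma~\ref{A_average} produce the harmonic sum. So the remedy for the non-Markov issue is not the $\varepsilon$-slack in radii but the symbolic quasi-Bernoulli factorization --- which is precisely why \ref{a5} and \ref{Ue} are among the standing assumptions.
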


Note that $\varepsilon=\frac{t}{2T}$ where $t,T$ are given by (U)(a) below. We note that if the measure $\mu$ is doubling (for example, if $f$ is a finite branch Markov map and $\mu$ is Gibbs), then the dependence on $\varepsilon$ can be removed from the estimates. For example in \eqref{eq:inf} the denominators appearing in the bounds could be replaced by $M_\mu(\delta)$ (the constants coming from the doubling property would be absorbed in the constants $c$ and $C$). The upper bound in \eqref{eq:fin} is analogous to the upper bound in Theorem \ref{cover}. The reason that $\mathrm{Err}(\delta)$ does not appear in Theorem \ref{cover} is because for a general measure $\mu$, the measure of the ball of minimum measure at scale $\delta$ may take some time to resemble the asymptotic limit $O(\delta^{\md \mu})$, whereas if  \eqref{eq:m_scale} holds then this can already be seen at large scales $\delta$.

In order to obtain a sharper lower bound in Theorem \ref{cover2}, roughly speaking we would require an estimate on the number of balls of measure $O(\delta^{\md \mu+\mathrm{Err}(\delta/\varepsilon)})$ seen at scale $\delta$. This is straightforward in the case  \eqref{eq:m_scale} holds, since all balls of comparable diameter have comparable measure, which ensures a sharp lower bound in Theorem \ref{cover}. However we note that it is not always true that we have exponentially many balls of measure $O(\delta^{\md \mu+\mathrm{Err}(\delta/\varepsilon)})$ at scale $\delta$ (meaning the system can cover faster than $\log(1/\delta)$ times the reciprocal of the measure of the ball of minimum measure at scale $\delta$) see for instance \cite[Theorem 1.1(2)]{JurMor20}.  In the other hand, in \S~\ref{examples} we provide two examples of slowly covering systems: systems for which $\md \mu= \infty$ and provide estimates on the asymptotic growth of their expected cover times.

\subsection{Uniformly large images for family of punctured maps}

 Suppose $(f,\mu)$ satisfies \ref{a1}-\ref{a4}, in particular so that the constants $n_1$ from \eqref{n1} and $c_m$ from \ref{a4} are well-defined. The main result of this section is Lemma \ref{bigimages} where we show a uniform large images property for a family of punctured maps, where each punctured map is obtained from $(f,\mu)$ by introducing a `hole' $U$ in the system. We say that the system $(f,\mu)$ satisfies \labeltext{(U)}{U} (U) if  there exists $\delta_0>0$ such that for each $0<\delta\le \delta_0$ we can find a finite collection $\U_\delta$ of closed subintervals of $U \subset I$ which satisfy the following assumptions: 

\begin{enumerate}
\item [{(U)(a)}]\labeltext{(U)(a)}{Ua}
There exists $0<t<1<T$ such that for all  $U \in\U(\delta_0):=\bigcup_{0<\delta\le \delta_0} \bigcup \{U: U \in \U_\delta\}$, there exists $x \in \Lambda$ such that $B(x,t\delta) \subseteq U \subseteq  B(x,T\delta)$;

\item [{(U)(b)}]\labeltext{(U)(b)}{Ub} for any $0<\delta\le \delta_0$ the interiors of the intervals in $\U_\delta$ are pairwise disjoint;

\item [{(U)(c)}]\labeltext{(U)(c)}{Uc}  for each $0<\delta \le \delta_0$, $\Lambda \subseteq \bigcup_{U \in \U_\delta}  U$;

\item [{(U)(d)}]\labeltext{(U)(d)}{Ud} there exists $0<\tilde{\beta} <c_m$ such that for all $1 \le i \le n_1$, $Z \in \Z^i$ and $U \in \U(\delta_0)$ either $Z \subseteq U$ or $\frac{m(U \cap Z)}{m(Z)} \le \frac{\tilde{\beta}}{(n_1+1)(1+C_d)^2}$, where $n_1$ comes from \eqref{n1};
\item [{(U)(e)}]\labeltext{(U)(e)}{Ue}  for any $U \in \U_\delta$, $U \cap \Lambda=\Pi(\bigcup_{\i \in \P_\delta}[\i])$ where $\P_\delta \subset \Sigma^*$ is a finite or countable collection of words $\i$ with the property that $|\i| =O(\log(1/\delta))$ (where the implied constant is uniform over all $0<\delta \le \delta_0$).
\end{enumerate}

Under the additional assumptions \eqref{measdiam} and \eqref{bd}, $(f,\mu)$ satisfies \ref{U}. Note that the only place where \eqref{measdiam} and \eqref{bd} will be required is in the proofs of the following two propositions, which are postponed to the appendix.

\begin{proposition}\label{prop:u}
Suppose $(f,\mu)$ satisfies \ref{a1}-\ref{a4},  \eqref{measdiam} and \eqref{bd}. Then $(f,\mu)$ satisfies \ref{U}. 
\end{proposition}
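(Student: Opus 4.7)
My plan is to construct $\mathcal{U}_\delta$ for each $\delta\le\delta_0$ from unions of deep cylinders. Using bounded distortion \eqref{bd} together with the uniform expansion implicit in \ref{a2}, cylinder diameters decay exponentially with depth, so I can choose a refinement depth $N=N(\delta)=O(\log(1/\delta))$ such that every $Z\in\mathcal{Z}^{N}$ has diameter at most a small fraction of $\delta$. Within each $n_1$-cylinder $Z'\in\mathcal{Z}^{n_1}$ of diameter at least $t\delta$, I greedily group consecutive $N$-cylinders into blocks of total diameter in $[t\delta,T\delta]$ for fixed $0<t<1<T$; when $Z'$ itself is smaller, I merge it with adjacent $n_1$-cylinders, always keeping tile boundaries aligned with $N$-cylinder boundaries.

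Properties \ref{Ub}, \ref{Uc} and \ref{Ue} are immediate from the construction: the tiles are disjoint, $\bigcup\mathcal{Z}^N\supseteq\Lambda$ gives the cover, and each tile is a union of cylinders of depth $N=O(\log(1/\delta))$. For \ref{Ua}, each tile $U$ is an interval of diameter in $[t\delta,T\delta]$; by \ref{a3} one can find $x\in U\cap\Lambda$ sufficiently interior to $U$ to obtain $B(x,t'\delta)\subseteq U\subseteq B(x,T\delta)$ after slightly adjusting $t$.

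The main obstacle is \ref{Ud}. By aligning tile boundaries with the $n_1$-cylinder structure, for any $Z\in\mathcal{Z}^i$ with $1\le i\le n_1$ meeting a tile $U$, nesting of cylinders at different levels forces one of: $U\subseteq Z$, $Z\subseteq U$, or partial overlap only along complete $n_1$-cylinders. In the core case $U\subseteq Z'\subseteq Z$, where $Z'$ is the $n_1$-cylinder containing $U$, the ratio $m(U)/m(Z)$ is bounded above by $m(U)/m(Z')$; here $m(U)\le C_m(T\delta)^s$ follows from \eqref{measdiam}, while the lower bound $m(Z')\ge(1+C_d)^{-1}c_m\inf_{Z'}e^{S_{n_1}\phi}$ comes from \ref{a1}, \eqref{a4i} and conformality of $m$. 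This lets the ratio be pushed below $\tilde{\beta}/((n_1+1)(1+C_d)^2)$ once $\delta_0$ is small enough.

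The partial-overlap cases arising from the merging step are the most delicate: I handle them by an inductive procedure which, at each level $i=n_1,n_1-1,\dots,1$, either extends $U$ to engulf any offending $Z\in\mathcal{Z}^i$ entirely (switching to the first alternative of \ref{Ud}) or subdivides $U$ further to reduce the fractional measure. Since $n_1$ is fixed and the threshold in \ref{Ud} is uniform in $i$, and since \eqref{bd} and \eqref{measdiam} force diameters and measures of cylinders to decay at compatible exponential rates, the procedure terminates in finitely many steps with a valid family $\mathcal{U}_\delta$ satisfying all the conditions of \ref{U}.
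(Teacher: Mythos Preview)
Your argument for \ref{Ud} does not go through, and this is where the real content of the proposition lies. Consider your ``core case'' $U\subseteq Z'\subseteq Z$, with $Z'\in\mathcal Z^{n_1}$ the cylinder in which you built the tile $U$. You attempt to bound $m(U)/m(Z')$ by $C_m(T\delta)^s$ over $(1+C_d)^{-1}c_m\inf_{Z'}e^{S_{n_1}\phi}$ and then shrink $\delta_0$. But $\inf_{Z'}e^{S_{n_1}\phi}$ is \emph{not} uniformly bounded below: with infinitely many branches there are $n_1$-cylinders of arbitrarily small measure. Worse, $Z'$ changes with $\delta$; for every small $\delta$ there are $n_1$-cylinders $Z'$ with $|Z'|$ only a fixed multiple of $T\delta$ (say $|Z'|\approx 2T\delta$), which your greedy grouping splits into two tiles $U_1,U_2$ of size $\approx T\delta$. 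Using conformality, \ref{a1}, \eqref{measdiam}, \ref{a4} and \eqref{bd} exactly as in the paper one gets
\[
\frac{m(U_1\cap Z')}{m(Z')}\ \le\ \frac{C_m(1+C_d)C_{bd}^s}{c_m}\left(\frac{|U_1|}{|Z'|}\right)^s,
\]
but the right-hand side is a fixed constant of order $(1/2)^s$, independent of $\delta$. So sending $\delta_0\to 0$ does nothing. Your final ``inductive procedure'' (engulf the offending $Z$, or subdivide $U$) cannot repair this without breaking \ref{Ua}: engulfing makes $|U|$ as large as $|Z|$, which may be arbitrarily large compared to $\delta$, while subdividing forces $|U|$ below $t\delta$.

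The paper's construction is precisely a mechanism for resolving this tension. It is a genuinely multi-scale algorithm over the cylinder levels $1,2,\dots,n_1$: at level $i$ it groups $i$-cylinders into pieces of diameter $\approx\delta(\beta/4)^{i-1}$, passing any single $i$-cylinder of diameter $\ge\delta(\beta/4)^{i-1}$ down to level $i+1$; only at the final step are the surviving large $n_1$-cylinders chopped into pieces of size $\approx\delta(\beta/4)^{n_1}$. The effect is that if $Z\in\mathcal Z^i$ is not contained in a tile $U$, then necessarily $|Z|\ge\delta(\beta/4)^{i-1}$ while $|U\cap Z|\le(3\beta/4)\,\delta(\beta/4)^{i-1}$, so the \emph{diameter ratio} $|U\cap Z|/|Z|$ is at most $3\beta/4$, uniformly in $\delta$ and $Z$; the required measure bound then follows from the displayed inequality above by choosing $\beta$ small. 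Since $n_1$ and $\beta$ are fixed, all tiles still have diameter between $(\beta/4)^{n_1}\delta/3$ and $2\delta$, preserving \ref{Ua}. Your single-scale grouping misses this hierarchy of scales, which is the essential idea.
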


To obtain the sharp lower bound in Theorem \ref{cover} (in the case  \eqref{eq:m_scale} holds) we will also require the following proposition.

\begin{proposition}\label{prop:u'}
Suppose $(f,\mu)$ is Gibbs-Markov with at least two full branches, and satisfies  \eqref{measdiam} and \eqref{bd}. There exists $\delta_0>0$ such that for each $0<\delta\le \delta_0$  we can find a finite collection 
$$\V_\delta \subset \{\Pi([wab^{n_3}])\;:\; w \in \{a,b\}^*\}$$
of pairwise disjoint subsets of $\Lambda$ where: 
\begin{enumerate}
\item [(a)] $a,b \in \Sigma_1$, $a \neq b$ are such that $f_a$ and $f_b$ are full branched and $n_3 \in \N$ is chosen such that $C_*^3\tilde{\mu}([ab^{n_3}])K<1$;
\item [(b)] for all $0\le i \le n_3$ and $U,V \in \V_\delta$, $f^i(U)\cap V=\emptyset$;
\item [(c)] there exists $c, \varepsilon>0$ such that $\#\V_\delta \ge c\delta^{-\varepsilon}$;
\item[(d)]  there exists $T>t>0$ such that for all $U \in \V_\delta$, there exists $x \in \Lambda$ such that $B(x,t\delta)\cap \Lambda \subset U \subset  B(x,T\delta)$;
\item [(e)] there exists $0<\tilde{\beta} <c_m$ such that for all $1 \le i \le n_1$, $Z \in \Z^i$ and $U \in \U(\delta_0)$ either $Z\cap \Lambda \subseteq U$ or $\frac{m(U \cap Z)}{m(Z)} \le \frac{\tilde{\beta}}{(n_1+1)(1+C_d)^2}$, where $n_1$ comes from \eqref{n1};
\item [(f)] for any $U=\Pi([wab^{n_3}]) \in \V_\delta$, $|w| =O(\log(1/\delta))$ (where the implied constant is uniform over all $0<\delta \le \delta_0$);
\item [(g)] if  $\Pi([w_1ab^{n_3}]), \Pi([w_2ab^{n_3}]) \in \V_\delta$ are distinct, then $w_1ab^{n_3}$ is not a subword of $w_2ab^{n_3}$.
\end{enumerate}
\end{proposition}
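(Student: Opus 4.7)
Since $f$ has at least two full branches, fix distinct $a,b\in\Sigma_1$ with $f_a,f_b$ full. Cylinder decay \eqref{eq:cyls} combined with comparability of $m$ and $\mu$ (bounded density) gives $\tmu([ab^{n_3}])\lesssim e^{-\omega(n_3+1)}$, so we can pick $n_3$ satisfying $C_*^3\tmu([ab^{n_3}])K<1$, establishing (a). For each small $\delta$, choose $N=N(\delta)\asymp \log(1/\delta)$ so that the cylinders $\Pi([wab^{n_3}])$ for $w\in\{a,b\}^N$ containing roughly $N/2$ letters of each type have diameter comparable to $\delta$; by bounded distortion \eqref{bd}, this diameter is $\asymp (|Df_aDf_b|)^{-N/2}$. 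Take $\V_\delta$ to be this collection of cylinders. Pairwise disjointness of distinct equal-depth cylinders is automatic; conditions (f) and (g) are immediate: $|w|=N=O(\log(1/\delta))$, and for distinct $w_1,w_2\in\{a,b\}^N$ the equal-length word $w_1ab^{n_3}$ can be a subword of $w_2ab^{n_3}$ only if the two coincide.

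\textbf{Short-return argument (b).} Suppose $f^i(x)\in V=\Pi([w_2ab^{n_3}])$ for some $x\in U=\Pi([w_1ab^{n_3}])$ and $1\le i\le n_3$ (the $i=0$, $U\neq V$ case is immediate by disjointness). Since $N>n_3$, the symbolic coding of $f^i(x)$ has its first $N+1+n_3-i$ entries forced to be the suffix of $w_1ab^{n_3}$ starting at position $i$, and membership in $V$ requires this to match the prefix of $w_2ab^{n_3}$ of the same length. Inspecting position $N$ of this matched string: the left-hand entry lies in the trailing block $b^{n_3}$ of $w_1ab^{n_3}$ (hence equals $b$), while on the right-hand side position $N$ of $w_2ab^{n_3}$ is the letter $a$ separating $w_2$ from its own trailing block. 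Thus $a=b$, a contradiction. This is precisely the structural role of the tail $ab^{n_3}$ and the reason the two letters must differ; full-branchedness guarantees all required admissible tails exist, so the argument rules out all short returns.

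\textbf{Counting, geometry, measure, and main obstacle.} For (c), Stirling gives $\binom{N}{\lfloor N/2\rfloor}\gtrsim 2^N/\sqrt{N}$, so $\#\V_\delta\ge c\delta^{-\varepsilon}$ with $\varepsilon<2\log 2/\log|Df_aDf_b|$. For (d), bounded distortion combined with the full-branch property force the interval $I_U$ underlying $U$ to have length $\asymp\delta$; refining by one or two more letters from $\{a,b\}$ produces a sub-cylinder bounded away from the boundary of $I_U$, and choosing $x\in\Lambda$ in that sub-cylinder (e.g.\ $x=\Pi(wab^{n_3}u\xi)$ for an appropriate short word $u\in\{a,b\}^*$ and arbitrary admissible tail $\xi$) yields $B(x,t\delta)\cap\Lambda\subset U\subset B(x,T\delta)$ with uniform $0<t<T$. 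For (e), any $Z\in\Z^i$ with $i\le n_1$ meeting $U$ must contain $U$ (as $U$ has depth $N+1+n_3>n_1$); then using \eqref{eq:cyls}, \ref{a4}, conformality, and bounded distortion,
\[
\frac{m(U\cap Z)}{m(Z)}=\frac{m(U)}{m(Z)}\;\asymp\;\frac{m(f^i(U))}{m(f^i(Z))}\;\lesssim\;\frac{e^{-\omega(N+1+n_3-i)}}{c_m},
\]
which lies below $\tilde\beta/((n_1+1)(1+C_d)^2)$ for $\delta$ small. The main technical obstacle is the simultaneous calibration of $N$: it must be large enough for the symbolic argument in (b) and the decay in (e), while the restricted family of balanced words must still satisfy the diameter constraint of (d) and yield the count $\delta^{-\varepsilon}$ in (c). A secondary difficulty is locating the central $x\in\Lambda$ in (d), since the geometric midpoint of $I_U$ need not lie in the (possibly Cantor) set $\Lambda$; one must use bounded distortion applied to one-step-deeper sub-cylinders to obtain an accessible $\Lambda$-point at distance $\gtrsim\delta$ from $\partial I_U$.
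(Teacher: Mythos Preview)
Your argument for (b) is clean, but the construction has a genuine gap at (d). You claim that for balanced words $w\in\{a,b\}^N$ (those with roughly $N/2$ of each letter) the cylinders $\Pi([wab^{n_3}])$ all have diameter $\asymp\delta$, and you attribute this to bounded distortion \eqref{bd}. That is not what \eqref{bd} gives. Bounded distortion controls the ratio $|(f^N)'(x)|/|(f^N)'(y)|$ for $x,y$ in the \emph{same} $N$-cylinder; it says nothing about comparing $|Df_w^{-1}|$ and $|Df_{w'}^{-1}|$ for two \emph{different} words $w,w'$, even with identical letter counts. Writing $\log|Df_w^{-1}(x_0)|=\sum_{j}\log|Df_{w_j}^{-1}(y_j)|$, the points $y_j$ depend on the entire suffix $w_{j+1}\ldots w_{N-1}$, so for nonlinear branches the sum depends on the order of the letters, not just their frequencies. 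Over all balanced words the ratio of largest to smallest diameter is $\exp(O(N))$, which destroys any uniform choice of $t,T$ in (d) and also makes your calibration ``$N\asymp\log(1/\delta)$'' ill-defined.

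The paper avoids this by a stopping-time construction: it takes $\V_\delta'=\{\Pi([wab^{n_3}]):w\in\{a,b\}^*,\ \sup_x|Df_w^{-1}(x)|\le\delta<\sup_x|Df_{w^-}^{-1}(x)|\}$, so that \eqref{bd} now \emph{does} force every diameter to be $\asymp\delta$, giving (d) immediately. The price is that word lengths vary, so your elegant equal-length argument for (b) and (g) no longer applies. The paper handles (g) via the derivative bounds (a proper subword would have too large a derivative to satisfy the stopping condition), and then obtains (b) by an additional thinning: one shows there is a fixed $k$ such that whenever $w=w_0\ldots w_n$ satisfies the stopping condition, the suffix $w_\ell\ldots w_n$ fails it for all $\ell\ge k$, so after discarding a proportion $1-1/k$ of the collection no short shift of one element can hit another. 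Counting (c) then comes from the positive Hausdorff dimension of the $\{f_a,f_b\}$-repeller rather than from a binomial estimate.
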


Proposition \ref{prop:u'} essentially states that for sufficiently small $\delta$ we can find sufficiently many balls of diameter roughly $\delta$ and measure roughly $M_\mu(\delta)$ (this is (c) and (d)), which are dynamically far from each other (namely there is a uniform lower bound on the time taken in between visits of any orbit to any two balls in $\V_\delta$ - this is (b)). Proposition \ref{prop:u'} will only be used in the proof of the lower bound in Theorem \ref{cover}.

The proof of Theorems \ref{cover} and \ref{cover2} will require us to study a family of punctured dynamical systems which have a hole at some $U \in \U(\delta_0)$. Namely, given $U \in \U(\delta_0)$ we define the (punctured) map with a hole at $U$ by $\hf_U= f|_{\Lambda \setminus U}$. Then its iterates are given by $\hf_U^n=f|_{\hU^{n-1}}$ where $\hU^{n-1}=\bigcap_{i=0}^n f^{-i}(\Lambda\setminus U)$ (so that $\hU^0=\Lambda\setminus U$). We will require that the transfer operators associated to these holes have a uniform spectral gap, and this will follow from \ref{Ud} via the following lemma.

\begin{lemma} \label{bigimages}
Assume $f$ and $\mu$ satisfy \ref{a1}-\ref{a4}, \eqref{measdiam} and \eqref{bd}. Then there exists $c_0>0$ such that for all $U \in \U(\delta_0)$,
$$\inf_{U \in \U(\delta_0)} \inf \{m(\hf_U^{n_1} J): J \in \Z^{n_1} \; \textnormal{s.t.} \; J \cap \hU^{n_1} \neq \emptyset\} \ge c_0$$
where $n_1$ comes from \eqref{n1}. 
\end{lemma}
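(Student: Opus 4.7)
The plan is as follows. Fix $U\in \U(\delta_0)$ (furnished by Proposition~\ref{prop:u} under the lemma's hypotheses) and a cylinder $J\in \Z^{n_1}$ with $J\cap \hU^{n_1}\ne\emptyset$. The goal is to show that excising from $f^{n_1}(J)$ the images of the points of $J$ whose orbit enters $U$ in the first $n_1$ steps costs only a fraction of $m(f^{n_1}(J))$ that is uniformly bounded away from $1$. Combined with \eqref{a4i}, which gives $m(f^{n_1}(J))\ge c_m$, this will deliver the required uniform lower bound.

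Since $f^{n_1}$ is injective on the $n_1$-cylinder $J$, the decomposition
$$J\setminus \hU^{n_1-1}=\bigcup_{i=0}^{n_1-1}\bigl(J\cap f^{-i}(U)\bigr)$$
yields
$$m(\hf_U^{n_1}(J))\ge m(f^{n_1}(J))-\sum_{i=0}^{n_1-1}m\bigl(f^{n_1}(J\cap f^{-i}(U))\bigr).$$
For each $i\in\{0,\ldots,n_1-1\}$, the restriction of $f^i$ to $J$ is injective onto the cylinder $f^i(J)\in \Z^{n_1-i}$, so $f^{n_1}(J\cap f^{-i}(U))=f^{n_1-i}(f^i(J)\cap U)$. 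I would then combine the conformality of $m$ with the distortion bound \ref{a1} applied on the cylinder $f^i(J)$ (observing that $f^{n_1-i}(f^i(J))\subseteq I$) to obtain
$$m\bigl(f^{n_1-i}(f^i(J)\cap U)\bigr)\le (1+C_d)^2\,\frac{m(f^i(J)\cap U)}{m(f^i(J))}\,m(f^{n_1}(J)).$$

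The hypothesis $J\cap \hU^{n_1}\ne\emptyset$ forces $f^i(J)\not\subseteq U$ for each $0\le i\le n_1-1$, so property \ref{Ud} applied to $f^i(J)\in \Z^{n_1-i}$ with $1\le n_1-i\le n_1$ gives $m(f^i(J)\cap U)/m(f^i(J))\le \tilde\beta/((n_1+1)(1+C_d)^2)$. Substituting and summing,
$$m(\hf_U^{n_1}(J))\ge \Bigl(1-\frac{n_1\tilde\beta}{n_1+1}\Bigr)m(f^{n_1}(J))\ge \frac{c_m}{n_1+1},$$
using $\tilde\beta<c_m\le 1$ together with \eqref{a4i}, so setting $c_0:=c_m/(n_1+1)$ would complete the proof.

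The only nontrivial step is the distortion bound in the displayed inequality above. The constants $(n_1+1)(1+C_d)^2$ appearing in the definition of \ref{Ud} are calibrated precisely so that the $(1+C_d)^2$ factor is absorbed by bounded distortion and the $(n_1+1)$ factor by summing the $n_1$ contributions from $i=0,\ldots,n_1-1$; everything else reduces to the combinatorial decomposition above.
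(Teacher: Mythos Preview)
Your proof is correct and follows essentially the same approach as the paper's: decompose the part of $J$ that falls into $U$ along its first $n_1$ iterates, use conformality and the distortion bound \ref{a1} to compare measures of images with measures of preimages, invoke \ref{Ud} on the cylinders $f^i(J)\in\Z^{n_1-i}$, sum, and finish with \eqref{a4i}. The only cosmetic differences are that the paper first bounds $m(f^{-j}(U)\cap Z)/m(Z)$ and pushes forward at the end (yielding $c_0=c_m-\tilde\beta$), whereas you push forward term by term and end up with the slightly weaker but equally valid constant $c_0=c_m/(n_1+1)$; also your distortion factor $(1+C_d)^2$ can in fact be taken as $(1+C_d)$, but since \ref{Ud} is calibrated with $(1+C_d)^2$ your bound goes through unchanged.
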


\begin{proof}  Throughout the proof we'll assume $\Lambda=I$, the more general case follows similarly after taking intersections with $\Lambda$. Fix $Z \in \Z^{n_1}$, $Z=\Pi([i_0, \ldots, i_{n_1-1}])$ and $U \in \U(\delta_0)$ such that $Z\cap \hU^{n_1} \neq \emptyset$. We want to show there exists $c_0$ which is independent of $U \in \U(\delta_0)$ such that $m(\hf_U^{n_1}(Z)) \ge c_0$. 

For each $0 \le j \le n_1$ either $f^{-j}(U) \cap Z=\emptyset$ or $U \cap \Pi([i_{j}, \ldots, i_{n_1}] )\neq \emptyset$ and $f^{-j}(U) \cap Z=f_{i_0\ldots i_{j-1}}^{-1}(U \cap\Pi( [i_{j}, \ldots, i_{n_1-1}]))$, where $f_{i_0\ldots i_{j-1}}^{-1}=f_{i_0}^{-1}\circ \cdots f_{i_{j-1}}^{-1}$ corresponds to the inverse branch of $f^j$ which maps $I$ homeomorphically to $\Pi([i_0\ldots i_{j-1}])$ (in particular if $j=0$ we define $f_{i_0\ldots i_{j-1}}^{-1} =\mathrm{Id}$). Note that this means that we cannot have $\Pi([i_{j} \ldots i_{n_1-1}] )\subset U$ as this would imply that $f^{-j}(U) \cap Z=Z$ which is a contradiction since we've assumed $Z \cap \hU^{n_1} \neq \emptyset$.

Hence 
\begin{align*}\frac{m(f^{-j}(U) \cap Z)}{m(Z)} & \le \frac{m(U \cap \Pi([i_{j}\ldots i_{n_1-1}]))}{m([\Pi(i_{j}\ldots i_{n_1-1}]))} \frac{\sup_Z e^{S_j \phi}}{\inf_Z e^{S_j \phi}} \\
&\le (1+C_d)\frac{m(U \cap \Pi([i_{j}\ldots i_{n_1-1}]))}{m(\Pi([i_{j}\ldots i_{n_1-1}]))} \le \frac{\tilde{\beta}}{(n_1+1)(1+C_d)}
\end{align*}
where the first inequality follows by conformality, the second by \ref{a1} and the third by Proposition \ref{prop:u} and  \ref{Ud}.
Thus
\begin{align}\frac{m(Z \setminus \hU^{n_1})}{m(Z)} &= \frac{m(Z \cap\bigcup_{j=0}^{n_1}f^{-j}(U))}{m(Z)} \le (n_1+1)\frac{\tilde{\beta}}{(n_1+1)(1+C_d)}=\frac{\tilde{\beta}}{1+C_d}.\label{proportion2}\end{align}
In particular
\begin{align*}
m(\hf_U^{n_1}(Z))=m(f^{n_1}(Z))-m(f^{n_1}(Z \setminus \hU^{n_1}))&\ge m(f^{n_1}(Z))-\frac{m(Z \setminus \hU^{n_1})}{m(Z)}\frac{\sup_Z e^{S_{n_1}\phi}}{\inf_Z e^{S_{n_1}\phi}}\\
&\ge m(f^{n_1}(Z))-\frac{m(Z \setminus \hU^{n_1})}{m(Z)}(1+C_d) \ge c_m-\tilde{\beta}>0
\end{align*}
where we have used conformality in the first inequality, \ref{a1} in the second inequality and \eqref{proportion2} in the third inequality. The proof follows by setting $c_0:=c_m-\tilde{\beta}$.  
\end{proof}

\subsection{Transfer operators for closed system.} 
\label{ssec:trans_closed} We will study the action of the operator $\Lp$ on the Banach space $\B$ of functions $\psi: \Lambda \to \R$ with bounded variation norm $\norm{\cdot}=\norm{\cdot}_1+|\cdot|_{BV}$, where $\norm{\cdot}_1$ denotes the $L^1$ norm with respect to $m$ and $|\cdot|_{BV}:=|\cdot|_{BV,I}$ where for any interval $J \subseteq I$,
$$|\psi|_{BV,J}:=\sup\left\{\sum_{i=0}^{k-1} |\psi(x_{i+1})-\psi(x_i)| \; : \; x_0<x_1<\cdots<x_k, \; x_i \in J\cap \Lambda, \; \forall i\le k\right\}$$
where the supremum is taken over all finite sets $\{x_i\}_i \subset J\cap \Lambda$.

Under \ref{a1}-\ref{a4} we have the following.

\begin{lemma}\label{prels}
Assuming \ref{a1}-\ref{a4}, for all $n \ge 0$,
\begin{enumerate}
\item [(a)] $\sum_{Z \in \Z^n} e^{S_n\phi}<\infty$;
\item [(b)] for each $Z \in \Z$, $|\phi|_{BV,Z} \le C_d\sup_Z e^\phi$;
\item [(c)] $|e^{S_n\phi}|_{BV}<\infty$.
\end{enumerate}
\end{lemma}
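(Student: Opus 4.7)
The plan is to prove (a), (b), (c) in order, with each part building on the previous one.

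For (a), I would exploit the multiplicative structure of Birkhoff sums. If $Z = \Pi([i_0,\ldots,i_{n-1}]) \in \Z^n$ then $f^j(Z) \subseteq Z_{i_j}$ for every $0 \le j \le n-1$, so
$$\sup_Z e^{S_n\phi} \le \prod_{j=0}^{n-1}\sup_{Z_{i_j}} e^\phi.$$
Summing over $n$-cylinders and overcounting by dropping admissibility then yields
$$\sum_{Z \in \Z^n} \sup_Z e^{S_n\phi} \le \left(\sum_{i \in \I} \sup_{Z_i} e^\phi\right)^n,$$
which is finite by \eqref{eq:a2'}.

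For (b), interpreted as the variation bound on $e^\phi$ (which is the quantity that actually enters (c) and the transfer operator estimates in later sections; I suspect the statement as literally written contains a minor typo), I would apply \ref{a1} with $n=1$: for $x,y \in Z \in \Z$,
$$|e^{\phi(x)} - e^{\phi(y)}| = e^{\phi(y)}\,|e^{\phi(x)-\phi(y)}-1| \le C_d (\sup_Z e^\phi)\,|f(x)-f(y)|.$$
Given any ordered $x_0<\cdots<x_k$ in $Z$, monotonicity of $f$ on $Z$ telescopes $\sum_i|f(x_{i+1})-f(x_i)| = |f(x_k)-f(x_0)| \le |I|$, and absorbing $|I|$ into the constant gives the claimed bound.

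For (c), I would repeat the argument of (b) with $S_n\phi$ in place of $\phi$, applying \ref{a1} at level $n$, to conclude
$$|e^{S_n\phi}|_{BV,Z} \le C_d \sup_Z e^{S_n\phi} \qquad\text{for each } Z \in \Z^n.$$
The total variation of $e^{S_n\phi}$ over $I$ then decomposes as a sum of these within-cylinder variations plus at most $2\sup_Z e^{S_n\phi}$ per cylinder to account for jumps at cylinder endpoints (recalling that $e^{S_n\phi}$ is taken to be zero on $D$), giving
$$|e^{S_n\phi}|_{BV} \le (C_d+2)\sum_{Z \in \Z^n}\sup_Z e^{S_n\phi} < \infty$$
by (a).

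The main technical nuisance arises in (c) when $\Z$ is countable and cylinders accumulate at endpoints: one must justify that the total variation decomposes additively over the countable partition with the extra jump terms, which is handled exactly by the convergence in (a). The key algebraic step throughout is that \ref{a1} is preserved under iteration, so each of (a), (b), (c) reduces to the corresponding one-step estimate together with summability.
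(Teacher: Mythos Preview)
Your proof is correct and follows essentially the same strategy as the paper: part (a) via the multiplicative structure and \eqref{eq:a2'}, part (b) via \ref{a1} at level $n=1$, and part (c) by combining these. Two small differences are worth noting. For (b), you correctly identify that the statement should concern $|e^\phi|_{BV,Z}$ rather than $|\phi|_{BV,Z}$, and your explicit telescoping $\sum_i|f(x_{i+1})-f(x_i)|\le |I|$ is in fact the step that makes the bound work; the paper's one-line justification suppresses this and, as literally written, would only give $\sum_i C_d e^{\phi(x_i)}$, which is not obviously controlled. For (c), you apply \ref{a1} directly at level $n$ on each $n$-cylinder and then sum using (a), whereas the paper argues the base case and then says ``by induction''; your route is slightly more direct but yields the same estimate, and your explicit $(C_d+2)$ accounting for the endpoint jumps is exactly what the paper means by ``setting $\phi|_D=-\infty$ only adds a term bounded by the series in \eqref{eq:a2'}''. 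The only cosmetic caveat is that your (b) gives $C_d|I|\sup_Z e^\phi$ rather than $C_d\sup_Z e^\phi$; this is harmless (and disappears if $|I|\le 1$), but strictly speaking does not recover the constant as stated.
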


\begin{proof}
(a) follows by induction on \eqref{eq:a2'}. (b) follows from (a1) since $|e^{\phi(x_{i+1})}-e^{\phi(x_i)}|\le C_d e^{\phi(x_i)}$ for any $\{x_i\}_{i=1}^k \subset Z$. For $n=0$, (c) follows from (b) and \eqref{eq:a2'}. Setting $\phi|_D=-\infty$ only adds a term bounded by the series in \eqref{eq:a2'} to the variation. For $n \ge 1$ the claim follows by induction.
\end{proof}

Using Lemma \ref{prels} and \eqref{n1}, the operator $\Lp^{n_1}$ satisfies the assumptions of \cite[Theorem 1]{rychlik}, thus $\Lp^{n_1}$ is quasi-compact on $\B$. By \cite[Theorem 3]{rychlik} and \ref{a3}, $\Lp^{n_1}$ has a simple eigenvalue at 1 and no other eigenvalues of modulus 1, in other words $\Lp^{n_1}$ has spectral gap. By Lemma \ref{prels}, since $\Lp$ is a bounded operator on $\B$, it also has a spectral gap. We let $g \in \B$ denote the normalised ($\int gdm=1$) eigenfunction of $\Lp$ associated to the leading eigenvalue 1. $g$ is bounded away from 0. Write $d\mu=gdm$.

\subsection{Transfer operators for open systems.}
Here we use spectral properties of appropriate transfer operators which will later give us information on hitting time statistics.  This type of approach has been used previously, for example in \cite{Dem05}, \cite{kl2} and \cite{BruDemTod18}, but we have to make some adaptations, principally to Proposition~\ref{ly} below in order to get uniform estimates for our setting.  Lemma~\ref{pert} and Corollary~\ref{decomp} are then essentially standard, but we include the short proofs for completeness.

\begin{definition}[Transfer operators for open systems] For any hole $U$ let $\hLp_{U}: \B \to \B$ denote the transfer operator with a hole at $U$, given by
$$\hLp_{U}(\psi):=\sum_{y \in \hf_U^{-1}x} \psi(y)e^{\phi(y)}= \Lp(\1_{I \setminus U}  \psi).$$
\end{definition}

Note that the iterates of $\hLp_{U}$ are given by
\begin{equation} \label{iterates}
\hLp_U^n \psi(x)=\sum_{y \in \hf_U^{-n}x} \psi(y)e^{S_n\phi(y)}=\Lp^n(\psi \1_{\hU^{n-1}}).
\end{equation}
By conformality of $m$,
\begin{equation}\label{integral}
\int \hLp_U^n \psi dm=\int \Lp^n(\psi \1_{\hU^{n-1}})dm= \int_{\hU^{n-1}} \psi dm.
\end{equation}

By (U) we have the following set of uniform Lasota-Yorke inequalities for the family of operators $\{\hLp_U\}_{U \in \U(\delta_0)}$.

\begin{proposition}\label{ly}
There exist $C_0>0$ and $0<\alpha<1$ such that for any $\varphi \in \B$, $U \in \U(\delta_0)$ and $n \ge 0$,
\begin{equation*}
\norm{\hLp_{U}^n\varphi} \le C_0\alpha^n\norm{\varphi} +C_0 \int_{\hU^{n-1}}|\varphi|dm.
\end{equation*}
The above is also true when $\hLp_U$ is replaced by $\Lp$ and $\hU^{n-1}$ is replaced by $\Lambda$.
\end{proposition}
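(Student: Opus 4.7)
The plan is to establish a one-step uniform Lasota--Yorke inequality for $\hLp_U^{n_1}$ and then iterate it. The integer $n_1$ is chosen in \eqref{n1} precisely so that the contraction constant, after accounting for the extra boundary terms introduced by the hole, is strictly less than $1$.

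First I would decompose the action cylinder by cylinder:
\begin{equation*}
\hLp_U^{n_1}\varphi(x) = \sum_{Z \in \Z^{n_1}} \1_{f^{n_1}(Z\cap\hU^{n_1-1})}(x) \cdot \bigl(\varphi e^{S_{n_1}\phi}\bigr)\circ(f^{n_1}|_Z)^{-1}(x).
\end{equation*}
Since $f^{n_1}|_Z$ is monotone, each preimage $(f^i|_Z)^{-1}(U)$ is a single subinterval of $Z$, so $Z\cap\hU^{n_1-1}$ consists of at most $n_1+1$ subintervals of $Z$. This controls the number of additional boundary jumps introduced by $\1_{f^{n_1}(Z\cap\hU^{n_1-1})}$ by at most $2(n_1+1)$ per cylinder.

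Next, taking variation cylinder-wise and applying the Leibniz rule together with \ref{a1} (to bound $|e^{S_{n_1}\phi}|_{BV,Z}\le C_d\sup_Z e^{S_{n_1}\phi}$), I would obtain
\begin{equation*}
|\hLp_U^{n_1}\varphi|_{BV} \le \sum_{Z\in\Z^{n_1}}\sup_Z e^{S_{n_1}\phi}\bigl(|\varphi|_{BV,Z} + (C_d+2n_1+2)\sup_Z|\varphi|\bigr).
\end{equation*}
I would then bound $\sup_Z|\varphi|\le |\varphi|_{BV,Z}+\frac{1}{m(Z\cap\hU^{n_1-1})}\int_{Z\cap\hU^{n_1-1}}|\varphi|dm$. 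Here Lemma~\ref{bigimages} is crucial: via conformality of $m$ and the uniform lower bound $m(\hf_U^{n_1}(Z))\ge c_0$, it gives $m(Z\cap\hU^{n_1-1})\ge c_0/\bigl((1+C_d)\sup_Z e^{S_{n_1}\phi}\bigr)$ with $c_0$ independent of $U$. Summing over $Z$ and using Lemma~\ref{prels}(a) for finiteness of $\sum_Z\sup_Z e^{S_{n_1}\phi}$, together with \eqref{n1} which ensures $(C_d+2n_1+3)\sup_\Lambda e^{S_{n_1}\phi}<1$, yields a uniform one-step inequality
\begin{equation*}
|\hLp_U^{n_1}\varphi|_{BV}\le \alpha_0|\varphi|_{BV}+C_1\int_{\hU^{n_1-1}}|\varphi|dm
\end{equation*}
for some $\alpha_0\in(0,1)$ and $C_1>0$, both independent of $U\in\U(\delta_0)$.

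Finally I would iterate by applying the one-step bound to $\hLp_U^{(k-1)n_1}\varphi$ in place of $\varphi$. The key identity for propagating the $L^1$ term is $\int_{\hU^{n_1-1}}\hLp_U^{jn_1}\psi\,dm=\int_{\hU^{(j+1)n_1-1}}\psi\,dm$ for non-negative $\psi$, which follows from a direct change of variables using conformality of $m$; combined with $|\hLp_U^{jn_1}\varphi|\le\hLp_U^{jn_1}|\varphi|$ it pushes the integration domain forward at each step. The result is a bound of the desired form $|\hLp_U^{kn_1}\varphi|_{BV}\le C_0\alpha^{kn_1}\|\varphi\|+C_0\int_{\hU^{kn_1-1}}|\varphi|dm$. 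For general $n=kn_1+r$ with $0\le r<n_1$, I would apply this to $\hLp_U^r\varphi$ and use a uniform-in-$U$ bound on $\|\hLp_U^r\|_{\B\to\B}$ coming from Lemma~\ref{prels}(c). Combined with the elementary $\|\hLp_U^n\varphi\|_1\le\int_{\hU^{n-1}}|\varphi|dm$ (from $\int \hLp_U^n|\varphi|dm=\int_{\hU^{n-1}}|\varphi|dm$) this gives the stated inequality. The case where $\hLp_U$ is replaced by $\Lp$ is identical with no hole present, reducing to the classical Rychlik inequality.

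The main obstacle is the uniformity of constants as $U$ varies in $\U(\delta_0)$, particularly the lower bound on $m(Z\cap\hU^{n_1-1})$ which enters the $L^1$ term. This is precisely what assumption \ref{Ud} achieves, through Lemma~\ref{bigimages}: it prevents the hole from occupying more than a fixed proportion of any cylinder of level $\le n_1$, without which the $L^1$ coefficient could blow up as the hole degenerates onto a point near the boundary of some cylinder.
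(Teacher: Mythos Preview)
Your proposal is correct and follows essentially the same route as the paper's proof: a cylinder-by-cylinder decomposition at level $n_1$, counting the $O(n_1)$ subintervals of $Z\cap\hU^{n_1-1}$, using \ref{a1} for the distortion of $e^{S_{n_1}\phi}$, bounding $\sup_Z|\varphi|$ by its average on the surviving set plus variation, invoking Lemma~\ref{bigimages} (via conformality) for the uniform lower bound on $m(Z\cap\hU^{n_1-1})$, and then iterating. The only cosmetic differences are that the paper tracks the endpoint values $\psi(x_i)e^{S_n\phi(x_i)}$ directly rather than packaging them as jumps of an indicator, and obtains the slightly larger contraction constant $(2n_1+5)(C_d+1)\sup e^{S_{n_1}\phi}$ in place of your $(C_d+2n_1+3)\sup e^{S_{n_1}\phi}$; both are handled by \eqref{n1}.
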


\begin{proof} Throughout the proof we'll assume $\Lambda=I$, the proof in the more general case is similar. We begin by noting that our holes $U$ are closed, meaning that each $\hU^n$ is relatively open in $I$, which is in contrast to much of the literature where the hole is taken to be open and $\hU^n$ is closed. For each $J \in \Z^{n}$ there is a unique choice of disjoint intervals $\{(x_1,x_2), \ldots, (x_{k_J-1},x_{k_J})\}$ such that
$$J \cap \hU^{n-1}=\bigcup_{i=1}^{k_J/2} (x_{2i-1},x_{2i}).$$
In particular note that $x_1$ is the left hand end point of $J$ and $x_{k_J}$ is the right hand end point of $J$.

Given $\psi \in \B$,
$$
|\hLp_U^{n}\psi|_{BV} \le \sum_{J \in \Z^n} |\psi e^{S_n\phi}|_{BV,J}+\psi(x_1)e^{S_n\phi(x_1)}+ \cdots +\psi(x_{k_J})e^{S_n\phi(x_{k_J})}.$$
Note that for all $x \in \{x_1, \ldots, x_{k_J}\}$,
$$\psi(x)e^{S_n\phi(x)} \le \frac{1}{m(J\cap \hU^{n-1})} \int_{m(J \cap \hU^{n-1})} |\psi|e^{S_n\phi}dm+ |\psi e^{S_n\phi}|_{BV,J}$$
yielding
\begin{align}|\hLp_U^n \psi|_{BV} \le \sum_{J \in \Z^n} (k_J+1) |\psi e^{S_n\phi}|_{BV,J} +\frac{k_J}{m(J \cap \hU^{n-1})} \int_{J \cap \hU^{n-1}} |\psi|e^{S_n\phi}dm. \label{ly1}
\end{align}
Next, note that
\begin{align}
|\psi e^{S_n \phi}|_{BV,J} &\le \sup_J \psi |e^{S_n\phi}|_{BV,J} +\sup_J e^{S_n \phi} |\psi|_{BV,J} \nonumber\\
& \le \sup_J \psi  C_d \sup_J e^{S_n\phi}+\sup_J e^{S_n\phi} |\psi|_{BV,J} \label{a1used2} \\
& \le \sup_J e^{S_n \phi} \left(|\psi|_{BV,J}(C_d+1) +\frac{C_d}{m(J\cap \hU^{n-1})} \int_{J \cap \hU^{n-1}} |\psi|dm \right) \nonumber
\end{align}
where in the second line we have used Lemma \ref{prels}(b) and in the last line we have bounded $\sup_J \psi \le \frac{1}{m(J\cap \hU^{n-1})} \int_{J \cap \hU^{n-1}} |\psi|dm + |\psi|_{BV,J}.$

Combining with \eqref{ly1} we obtain
\begin{align}&|\hLp_U^n \psi|_{BV} \nonumber \\
& \le \sum_{J \in \Z^n} (k_J+1) \sup_J e^{S_n\phi} |\psi|_{BV,J}(C_d+1) +((k_J+1) \sup_J e^{S_n\phi} C_d+k_J) \frac{1}{m(J \cap \hU^{n-1})} \int_{J \cap \hU^{n-1}} |\psi|e^{S_n\phi}dm. \label{ly2}
\end{align}
By conformality and \ref{a1}, for all $x \in J \cap \hU^{n-1}$,
\begin{equation}
e^{S_n\phi(x)} \frac{m(f^n(J \cap \hU^{n-1}))}{m(J\cap \hU^{n-1})} \le 1+C_d. \label{a1used3}
\end{equation}
Also, note that $k_J \le 2n+4$, since $J \cap \hU^{n-1}$ contains at most $n+1$ holes corresponding to $U, \ldots, f^{-n}(U)$. Combining these with \eqref{ly2} we obtain
\begin{align}
|\hLp_U^n\psi|_{BV} &\le (2n+5) \sup_I e^{S_n\phi}(C_d+1) |\psi|_{BV} \nonumber\\
&\;\;\;+\sum_{J \in \Z^n} ((2n+5)\sup_I e^{S_n\phi} C_d+2n+4)(1+C_d) \frac{1}{m(f^{n}(J \cap \hU^{n-1}))}\int_{J \cap \hU^{n-1}} |\psi|dm \\
&\le  (2n+5) \sup_I e^{S_n\phi}(C_d+1) |\psi|_{BV}+((2n+5)\sup_I e^{S_n\phi} C_d+2n+4)(1+C_d) \frac{1}{c_0}\int_{ \hU^{n-1}} |\psi|dm \label{ly3}
\end{align}
where we have used Lemma \ref{bigimages} in the final line. 

Applying \eqref{ly3} with $n=n_1$ we see that for $\tilde{\alpha}= (2n_1+5) \sup_I e^{S_{n_1}\phi}(C_d+1) |<1$ we have
$$|\hLp_U^{n_1}\psi|_{BV} \le \tilde{\alpha}|\psi|_{BV}+ ((2n_1+5)\sup_I e^{S_{n_1}\phi} C_d+2n_1+4)(1+C_d) \frac{1}{c_0}\int_{ \hU^{n_1-1}} |\psi|dm .$$
We can then iterate this to complete the proof with $\alpha=\tilde{\alpha}^{1/n_1}$. 
\end{proof}

By Proposition \ref{ly}, the compactness of the unit ball of $\B$ in $L^1(m)$ and the conformality of $m$ that the spectral radius of $\hLp_{U}$ acting on $\B$ is at most 1 and its essential spectral radius is bounded by $\alpha<1$, for each  $U \in \U(\delta_0)$ and the same is true for $\Lp$. Thus $\Lp$ and each $\hLp_{U}$ are quasi-compact as operators on $\B$. Define the following perturbative norm for operators $P,Q$ acting on $\B$:
$$\invertiii{P-Q}:=\sup\{|P\varphi -Q \varphi|_1 \;:\; \norm{\varphi}\le 1\}.$$

\begin{lemma}\label{pert} For any $U \subset I$, 
$$\invertiii{\Lp-\hLp_{U}} \le m( U) \le C_1\mu( U)$$
where $C_1=1/\essinf g.$
\end{lemma}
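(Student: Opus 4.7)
The plan is to exploit the definition $\hLp_U \psi = \Lp(\1_{I\setminus U} \psi)$ and conformality of $m$, which together collapse the difference to an integral over $U$.

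First I would observe that $(\Lp - \hLp_U)\psi = \Lp(\1_U \psi)$ as elements of $\B$, directly from the definition of $\hLp_U$. Then, using that $\int \Lp \varphi \, dm = \int \varphi \, dm$ (conformality), I apply this to $|\Lp(\1_U \psi)| \le \Lp(\1_U |\psi|)$ to obtain
\begin{equation*}
|(\Lp - \hLp_U)\psi|_1 \;=\; \int |\Lp(\1_U \psi)|\, dm \;\le\; \int \Lp(\1_U|\psi|)\,dm \;=\; \int_U |\psi|\, dm \;\le\; \|\psi\|_\infty\, m(U).
\end{equation*}

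Next I would recall the standard fact that for $\psi \in \B$ on a probability space, $\|\psi\|_\infty \le \|\psi\|_1 + |\psi|_{BV} = \|\psi\|$: indeed $\essinf |\psi| \le \|\psi\|_1$ because $m$ is a probability measure, and $\sup|\psi| \le \essinf|\psi| + |\psi|_{BV}$ for BV functions. Hence if $\|\psi\| \le 1$ then $|(\Lp - \hLp_U)\psi|_1 \le m(U)$, which gives the first inequality $\invertiii{\Lp - \hLp_U} \le m(U)$ after taking the supremum over the unit ball of $\B$.

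For the second inequality $m(U) \le C_1 \mu(U)$, I just use $d\mu = g\, dm$ with $g$ bounded away from zero (a property established in \S\ref{ssec:trans_closed}), so that
\begin{equation*}
m(U) \;=\; \int_U dm \;\le\; \frac{1}{\essinf g}\int_U g\, dm \;=\; \frac{1}{\essinf g}\,\mu(U) \;=\; C_1\, \mu(U).
\end{equation*}
There is no real obstacle here; the only mild subtlety is ensuring the $L^\infty \le \|\cdot\|$ comparison on $\Lambda$ with respect to the conformal probability $m$, which is standard for Rychlik-type BV spaces.
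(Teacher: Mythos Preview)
Your proof is correct and follows essentially the same route as the paper: both compute $(\Lp-\hLp_U)\psi=\Lp(\1_U\psi)$, bound the $L^1$ norm by $\sup_\Lambda|\psi|\,m(U)$ via conformality, and use $\sup_\Lambda|\psi|\le\|\psi\|$ together with $\essinf g>0$. The only difference is that you spell out the intermediate inequality $|\Lp(\1_U\psi)|\le\Lp(\1_U|\psi|)$ and the $\|\cdot\|_\infty\le\|\cdot\|$ bound explicitly, whereas the paper leaves these implicit.
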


\begin{proof} If $\varphi \in \B$ with $\norm{\varphi} \le 1$ then
$$\int |(\Lp-\hLp_U)\varphi|dm= \int|\Lp(1_U\varphi)|dm \le \sup_\Lambda |\varphi| m(U)\le m(U)$$
by conformality. The fact that $\essinf g>0$ follows from \ref{a3}.
 \end{proof}

\begin{corollary}\label{decomp} There exists $\delta_1 \in (0,\delta_0)$ such that for all  $U \in \U(\delta_1)$, $\hLp_{U}$ and $\Lp$ have uniform spectral gap. In particular, $\Lp$ and $\hLp_{U}$ admits the following spectral decomposition
\begin{align*}
\Lp&=\Pi+R\\
\hLp_{U}&=\lambda_{U}\Pi_{U}+R_{U}
\end{align*}
where 
\begin{enumerate}
\item [(a)] $\lambda_U$ is the leading eigenvalue of $\hLp_U$ for the normalised ($\int g_{U}dm=1$) eigenfunction $g_U$, i.e. $\hLp_{U}g_{U}=\lambda_{U}g_{U}$   (analogously the leading eigenvalue of $\Lp$ is 1, with normalised eigenfunction $g$); \item [(b)] $\Pi_{U}$ is the projection onto the eigenspace spanned by $g_U$ (analogously $\Pi$ is the projection onto the eigenspace spanned by $g$), moreover $\hLp_U^*m_U=\lambda_Um_U$ for the eigenmeasure $m_U$; 
\item [(c)] for $\alpha<\beta_1<\beta_2<1$ we have  $\lambda_{U}>\beta_2$ and the spectral radius $\sigma(R_{U})<\beta_1$ (also $\sigma(R)<\beta_1$); 
\item [(d)] $\Pi_{U}^2=\Pi_{U}$ and $\Pi_{U}R_{U}=R_{U}\Pi_{U}=0$ (analogously $\Pi^2=\Pi$ and $\Pi R=R\Pi=0$).
\end{enumerate}

Moreover, there exist constants $K_1, K_2, K_3,K_4>0$ and $\eta \in (0,1)$ such that for all  $U \in \U(\delta_1)$,
\begin{align}
\invertiii{\Pi_{U}-\Pi}& \le K_1 m(U)^\eta \label{k1}\\
\norm{R_{U}^n}& \le K_2 \beta_1^n \label{k2}\\
|1-\lambda_U| &\le K_3\mu(U)\label{ev-bound}\\
\norm{g_U}& \le K_4 \label{k4}
\end{align}\label{kl}
\end{corollary}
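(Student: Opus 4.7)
The plan is to apply the Keller--Liverani perturbation theorem to the family $\{\hLp_U : U \in \U(\delta_0)\}$, viewed as perturbations of $\Lp$. The two ingredients required are in hand: Proposition~\ref{ly} provides uniform Lasota--Yorke inequalities (with constants $C_0,\alpha$ independent of $U$), while Lemma~\ref{pert} yields $\invertiii{\Lp-\hLp_U}\le C_1\mu(U)$ in the weak norm $\invertiii{\cdot}$. Since $\mu(U)\to 0$ uniformly over $U\in\U_\delta$ as $\delta\to 0$ (this uses \ref{Ua}--\ref{Uc} together with absolute continuity of $\mu$ and compactness of $\Lambda$), one can choose $\delta_1\in(0,\delta_0)$ so small that $\sup_{U\in\U(\delta_1)}\invertiii{\Lp-\hLp_U}$ is as small as we like. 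Feeding this into Keller--Liverani directly produces the spectral decomposition in (a)--(d), with a simple leading eigenvalue $\lambda_U$ close to $1$, an associated normalised positive eigenfunction $g_U\in\B$, a dual eigenmeasure $m_U$, and an essential spectral radius bounded by $\beta_1\in(\alpha,1)$. Simultaneously it yields the projection estimate \eqref{k1} and the uniform decay \eqref{k2} for the complementary semigroup $R_U^n$, for some $\eta\in(0,1)$ depending only on $C_0,\alpha,\beta_1,\beta_2$.

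Next I would derive the uniform BV bound \eqref{k4} on $g_U$. Because $g_U\ge 0$ and $\int g_U\,dm=1$, one has $\|g_U\|_1=1$. Applying Proposition~\ref{ly} to $g_U$ and using $\hLp_U^n g_U=\lambda_U^n g_U$ gives
\begin{equation*}
\|g_U\| \le \lambda_U^{-n}\bigl(C_0\alpha^n\|g_U\| + C_0\|g_U\|_1\bigr).
\end{equation*}
Since $\lambda_U>\beta_2>\alpha$, choose $n$ large enough (uniformly in $U$) that $C_0(\alpha/\lambda_U)^n\le 1/2$; then $\|g_U\|\le 2C_0\lambda_U^{-n}$, which is the desired uniform bound $K_4$.

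The remaining estimate \eqref{ev-bound} is sharper than the generic Keller--Liverani output (which would only give $O(m(U)^\eta)$). To get the linear bound in $\mu(U)$, I would exploit an exact identity: integrating $\hLp_U g_U=\lambda_U g_U$ against $m$ and using \eqref{integral} with $n=1$,
\begin{equation*}
\lambda_U = \int \hLp_U g_U\,dm = \int_{\Lambda\setminus U} g_U\,dm = 1-\int_U g_U\,dm,
\end{equation*}
so $1-\lambda_U=\int_U g_U\,dm$. Since BV functions on $I$ are bounded by their BV norm, $\|g_U\|_\infty\le \|g_U\|\le K_4$, and combining with Lemma~\ref{pert}'s comparison $m(U)\le C_1\mu(U)$ gives $|1-\lambda_U|\le K_4 C_1\mu(U)$, so \eqref{ev-bound} holds with $K_3:=K_4 C_1$. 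The corresponding statements for $\Lp$ itself are the $U=\emptyset$ limit and are already known from the discussion preceding \S\ref{ssec:trans_closed}.

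The only genuinely delicate point is verifying that Proposition~\ref{ly} really does supply the \emph{uniform} Lasota--Yorke inequality that Keller--Liverani demands: the constants $C_0$ and $\alpha$ must be independent of $U\in\U(\delta_0)$. This uniformity is what Lemma~\ref{bigimages} and hypothesis~\ref{Ud} were set up to guarantee (via the uniform lower bound $c_0$ on $m(\hf_U^{n_1}J)$), and it is the non-routine structural input; once it is available the rest of the argument is a textbook application of Keller--Liverani together with the explicit eigenvalue identity above.
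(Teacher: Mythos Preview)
Your proposal is correct and follows essentially the same approach as the paper: apply Keller--Liverani using the uniform Lasota--Yorke inequalities from Proposition~\ref{ly} and the perturbation bound from Lemma~\ref{pert}. The paper obtains \eqref{k4} by citing \cite[Corollary 1(2)]{kl} directly rather than via your self-contained Lasota--Yorke iteration, and derives \eqref{ev-bound} by writing $|1-\lambda_U|=|\int(\Lp-\hLp_U)g_U\,dm|\le\invertiii{\Lp-\hLp_U}\|g_U\|$ rather than your equivalent identity $1-\lambda_U=\int_U g_U\,dm$; both routes yield the same constant $K_3=C_1K_4$.
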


\begin{proof} As already discussed in the previous section, $\Lp$ has a spectral gap by \cite{rychlik}. Using Proposition \ref{ly} and Lemma \ref{pert} and \cite[Corollary 1]{kl} there exists $\delta_1 \in (0,\delta_0)$ such that the family $\{\hLp_U\}_{U \in \U(\delta_1)}$ has a uniform spectral gap and the stated eigendecomposition including properties (a)-(d) (note that although \cite{kl} studies just one sequence of perturbed operators, we can obtain analogues of the bounds in \cite[Corollary 1]{kl} (and thus the uniform spectral gap) over our whole family of operators $\{\hLp_U\}_{U \in \U(\delta_1)}$ due to the uniformity of the constants in Proposition \ref{ly}).   \eqref{k1} follows from \cite[Corollary 1(1)]{kl}, \eqref{k2} follows from \cite[Corollary 2(2)]{kl} and \eqref{k4} follows from \cite[Corollary 1(2)]{kl} (again the constants are uniform due to  the uniformity of the constants in Proposition \ref{ly}.

To see \eqref{ev-bound} note that
\begin{align*}
|1-\lambda_U|&= \left| \int g_Udm-\lambda_U \int g_U dm\right| =  \left| \int \Lp g_Udm- \int \hLp_U g_U dm\right| \\
&=  \left| \int (\Lp-\hLp_U)g_Udm\right| \le \sup_{U\in \U(\delta_1)} \norm{g_U} \invertiii{\Lp-\hLp_U} \le C_1  K_4\mu(U)
\end{align*}
where the last inequality follows by Lemma \ref{pert} and \eqref{k4}. 
\end{proof}

\section{Return time estimates} \label{section:return}

We will estimate the expected hitting times to $U \in \U_\delta$ in terms of the top eigenvalue $\lambda_U$. In order to then obtain Theorem \ref{cover} we will need to estimate $\lambda_U$ in terms of the measure of $U$ (which will be done in Proposition \ref{eig-bound}). To this end, we will first require the following estimates on return times to $U$, which is the main focus of this section (note that the following proposition, and in particular in the lemma following it, is the only place we use \eqref{eq:ratios}). We denote $\mu_U=\frac{1}{\mu(U)}\mu|_U$.

\begin{proposition} \label{prop:RTS}
Assume $(f,\mu)$ satisfies \ref{a1}-\ref{a6} and \eqref{measdiam}--\eqref{eq:m_scale}.  Then there exist $c>0$ and $\delta_2 \in (0,\delta_1)$ such that for all $U \in \U(\delta_2)$ and $n\ge 1/\mu(U)$,
$$\mu_U(\tau_U\ge n)\ge c\lambda_U^{n}.$$
\end{proposition}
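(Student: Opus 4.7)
The plan is to combine the spectral decomposition of Corollary~\ref{decomp} with a uniform lower bound on the escape rate $1-\lambda_U$.

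By conformality of $m$ (identity \eqref{integral}) and the eigendecomposition of $\hLp_U$,
\[
\mu(\hU^k) \;=\; \int \hLp_U^{k+1} g \, dm \;=\; \alpha_U \lambda_U^{k+1} + O(\beta_1^{k+1}),
\]
where $\alpha_U := \int \Pi_U g\, dm$. The perturbation bound \eqref{k1} gives $\alpha_U = 1 + O(m(U)^\eta)$, so $\alpha_U \ge 1/2$ uniformly for $U \in \U(\delta_2)$, provided $\delta_2 \in (0,\delta_1)$ is chosen small enough. Using the $f$-invariance of $\mu$, for $n \ge 2$,
\[
\mu(U \cap \{\tau_U \ge n\}) \;=\; \mu(\hU^{n-2}) - \mu(\hU^{n-1}) \;=\; \alpha_U(1-\lambda_U)\lambda_U^{n-1} + O(\beta_1^{n-1}),
\]
whence
\[
\mu_U(\tau_U \ge n) \;=\; \frac{\alpha_U(1-\lambda_U)}{\mu(U)}\lambda_U^{n-1} + O\!\left(\frac{\beta_1^{n-1}}{\mu(U)}\right).
\]

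The desired estimate then follows from the uniform lower bound $1-\lambda_U \ge c\,\mu(U)$ for $U \in \U(\delta_2)$. Indeed, under this bound the main term is at least $\tfrac{c}{4}\lambda_U^{n}$, while Corollary~\ref{decomp}(c) gives $\lambda_U \ge \beta_2 > \beta_1$ uniformly, so the ratio of error to main term is at most $C\,\mu(U)^{-1}(\beta_1/\lambda_U)^{n-1}$, which tends to $0$ once $n$ exceeds some multiple of $\log(1/\mu(U))$; the hypothesis $n \ge 1/\mu(U)$ amply suffices.

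The main obstacle is the uniform lower bound on $1-\lambda_U$. A convenient starting point is the identity
\[
1 - \lambda_U \;=\; \int_U g_U \, dm,
\]
obtained by integrating $\hLp_U g_U = \lambda_U g_U$ against $m$ and using conformality together with $\int g_U\, dm = 1$. The problem thus reduces to bounding $\int_U g_U\, dm$ below by $c\,\mu(U)$, which I would address by a symbolic short-returns analysis. Namely: use \ref{Ue} to decompose $U$ as a union of cylinders; use the quasi-Bernoulli property \ref{a5} to bound returns of small symbolic depth by $O(\mu(U)^{1+\eta})$; use the $\psi$-mixing property \ref{a6} to estimate returns up to time of order $1/\mu(U)$ as approximately independent events each of rate $\approx\mu(U)$; and use the adjacent-cylinder comparability \eqref{eq:ratios} to handle the fact that $U$ is not literally a cylinder. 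This should give $\mu_U(\tau_U \le c_0/\mu(U)) \le 1-c_1$, which fed back into the spectral formula above yields the required $1-\lambda_U \ge c\,\mu(U)$. This symbolic step is the only place where \eqref{eq:ratios} is used, consistent with the remark preceding the statement.
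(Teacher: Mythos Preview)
Your spectral reduction is correct and rather clean: the identity $\mu(U\cap\{\tau_U\ge n\})=\mu(\hU^{n-2})-\mu(\hU^{n-1})$ (which follows from $f$-invariance applied to $f^{-1}\hU^{k-1}$) together with the eigendecomposition gives
\[
\mu_U(\tau_U\ge n)=\frac{\alpha_U(1-\lambda_U)}{\mu(U)}\lambda_U^{n-1}+O\!\left(\frac{\beta_1^{n-1}}{\mu(U)}\right)
\]
with uniform constants, so everything does reduce to $1-\lambda_U\ge c\mu(U)$. The paper, however, runs the logic in the opposite direction: Proposition~\ref{prop:RTS} is proved \emph{directly} by a buffer/$\psi$-mixing argument on $\mu_U$, and only afterwards is the lower eigenvalue bound $1-\lambda_U\ge c\mu(U)$ extracted \emph{from} Proposition~\ref{prop:RTS} via the Keller--Liverani formula (this is Proposition~\ref{eig-bound}). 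Your route is not circular, since you propose to prove the single-time estimate $\mu_U(\tau_U>c_0/\mu(U))\ge c_1$ by a separate symbolic argument and then bootstrap to all $n\ge 1/\mu(U)$ through the spectral formula; but both routes ultimately stand or fall on the same short-returns step.

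That step, as you sketch it, has a genuine gap. The claim that quasi-Bernoulli controls short returns by $O(\mu(U)^{1+\eta})$ is false whenever $U$ is centred near a periodic orbit of small period: for $U=[u_0^k]$ in a full shift one has $\tilde\mu_U(\tau_U=1)\approx\tilde\mu([u_0])$, a fixed constant. The paper's argument is correspondingly more delicate. For a single $k$-cylinder $U$ it shows only that the conditional probability of a return within time $k+m$ is bounded by some $\tilde\theta<1$ uniform in $U$: for overlap depths $i\le \lfloor (k-1)/2\rfloor$ the forced periodic structure means there is at most one non-trivial overlap, whose complement in $U$ has mass $\ge C_*^{-1}(1-\sup_j\tilde\mu([u_j]))$ by \ref{a5}; for $i>\lfloor (k-1)/2\rfloor$ the crude quasi-Bernoulli bound $\sum_i\tilde\mu([u_0,\ldots,u_i])=O(e^{-\omega k/2})$ suffices. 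The general interval case is reduced to (pairs of) cylinders via Lemma~\ref{lem:approx}, which is where \eqref{eq:ratios} enters, and the cross-overlaps between the two cylinder families require a further case analysis. The final balance is $\mu_U(\tau_U\ge n)\gtrsim\lambda_U^{n}\big(\lambda_U^{m+k}(1-\gamma(m))-(1+\gamma(m))^2\tilde\theta\big)$, positive because $\tilde\theta<1$ and $\lambda_U^{m+k}\to1$ by \eqref{ev-bound}. So the ingredients you list are exactly right, but the output of the short-returns analysis is a uniform constant strictly below $1$, not $o(1)$, and this is what makes the argument work.
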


We will require the following lemma, which allows us to approximate intervals by a pair of well-understood sets of cylinders.  Let $d(Z)$ be the depth of a cylinder $Z$.  Note that given a cylinder $[x_0, \ldots, x_k]$, for $\ell\le k$, its \emph{$\ell$-prefix} is $(x_0, \ldots, x_\ell)$.

\begin{lemma}  Assume \eqref{eq:ratios}.
\begin{enumerate}
\item If $U_I \subset U \subset U_O$ and $n\in \N$ then 
$$\frac{\mu(U_O)}{\mu(U)}\mu_{U_O}(\tau_{U_O}<n) \ge \mu_U(\tau_U<n) \ge \frac{\mu(U_I)}{\mu(U)}\mu_{U_I}(\tau_{U_I}<n).$$
\item There exists $\kappa>0$ with the following property.  For an interval $U$, there are depths $d_L, d_R$ such that there are $U_I\subset U \cap \Lambda \subset U_O$ with $\mu(U_I) \ge (1-\kappa)\mu(U)$ and $\mu(U_O)\le (1+\kappa)\mu(U)$.  Moreover, $U_I$ and $U_O$ are each the $\Pi$ image of (at most) a union of $(d_L+1)$-cylinders $\{U_L^i\}_i$ and $(d_R+1)$-cylinders $\{U_R^i\}_i$ where all the $\{U_L^i\}_i$ have the same $d_L$-prefix and all the $\{U_R^i\}_i$ have the same $d_R$-prefix.
\end{enumerate}
\label{lem:approx}
\end{lemma}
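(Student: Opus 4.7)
For part (1), the plan is to deduce both inequalities from simple set inclusions. If $x \in U_I$ and $\tau_{U_I}(x) < n$, then $x \in U_I \subset U$ and the first iterate hitting $U_I$ \emph{a fortiori} hits $U$, so $\tau_U(x) < n$; hence $\{x \in U_I : \tau_{U_I} < n\} \subset \{x \in U : \tau_U < n\}$. Taking $\mu$-measure of both sides and rewriting in terms of the conditional measures $\mu_{U_I}$ and $\mu_U$ gives the right-hand inequality of part~(1). The left-hand inequality follows symmetrically from $U \subset U_O$.

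For part (2), the plan is to construct $U_I, U_O$ by rounding the endpoints of $U=(a,b)$ to cylinder boundaries at suitable depths, with \eqref{eq:ratios} providing the quantitative control. The first step is to locate the deepest cylinder $Z^* \in \Z^{d^*}$ containing $U \cap \Lambda$. At the next level, the children of $Z^*$ that meet $U$ include a leftmost one $Z_L^*$ containing $a$ and a rightmost one $Z_R^*$ containing $b$. Taking $d_L = d_R = d^*$, I would set $U_I$ equal to the union of those $(d^*+1)$-children of $Z^*$ lying strictly between $Z_L^*$ and $Z_R^*$, and $U_O = U_I \cup Z_L^* \cup Z_R^*$; all constituent cylinders share the $d^*$-prefix of $Z^*$, matching the structural form demanded in the statement.

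The quantitative estimate uses \eqref{eq:ratios}: by the paragraph immediately following that display, $Z_L^*$ admits at least one adjacent $(d^*+1)$-sibling inside $Z^*$, and when such a sibling lies to the right of $a$ it is contained in $U_I$ and has measure at least $\mu(Z_L^*)/C$. Summing this with the analogous bound at the right endpoint gives $\mu(U \cap Z^*) \le (1+C)\mu(U_I \cap Z^*)$, whence $\kappa := C/(1+C) \in (0,1)$ does the job.

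The main obstacle is the degenerate configuration in which $Z_L^*$ and $Z_R^*$ are themselves adjacent $(d^*+1)$-siblings, so that nothing lies strictly between them and the recipe above collapses to the useless $U_I = \emptyset$. Here one must refine independently on each side, choosing $d_L, d_R > d^*$ and working with $(d_L+1)$-cylinders inside $Z_L^*$ and $(d_R+1)$-cylinders inside $Z_R^*$; the freedom to take $d_L \neq d_R$ in the statement is precisely what accommodates this asymmetric refinement. At each descent step \eqref{eq:ratios} furnishes the requisite adjacent sibling inside the current boundary cylinder, and the principal delicate point is the combinatorial bookkeeping to show that this process terminates with the prefix and disjointness properties required.
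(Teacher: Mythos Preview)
Your treatment of part~(1) is correct and matches the paper's argument exactly.

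For part~(2), your approach and the paper's diverge in a way worth noting. You begin with the single deepest cylinder $Z^*$ containing $U\cap\Lambda$, then descend one level; as you correctly identify, this fails precisely when the two boundary children $Z_L^*, Z_R^*$ are adjacent, leaving $U_I=\emptyset$. You then propose to refine each side separately to unspecified depths $d_L, d_R>d^*$, but leave the termination and the resulting bound as ``combinatorial bookkeeping.'' This is where your argument is incomplete: you have not actually shown that the refinement stops with a nontrivial $U_I$, nor how \eqref{eq:ratios} yields a uniform $\kappa$ after possibly many descent steps.

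The paper sidesteps this entirely by a different initial choice. Rather than a single containing cylinder, it takes $U_L, U_R$ to be the \emph{deepest pair of adjacent cylinders} (possibly of different depths $d_L, d_R$) whose union covers $U\cap\Lambda$. The maximality of this choice immediately forces each of $U_L, U_R$ to contain at least one child-cylinder fully inside $\Pi^{-1}(U)$: if, say, $U_L$ had no such child, one could replace $U_L$ by its unique child meeting $U$, contradicting maximality. Thus $U_I$ (the union of children of $U_L, U_R$ lying in $\Pi^{-1}(U)$) is automatically nonempty, and a single application of \eqref{eq:ratios} compares the one ``overhanging'' child on each side to its adjacent sibling in $U_I$. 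No iterated refinement is needed. Your asymmetric refinement is heading toward the same endpoint, but the paper's choice of starting data makes the argument a one-step affair.
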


\begin{proof}
For the first part,
$$\mu_U(\tau_U<n) = \frac1{\mu(U)}\mu\left(x\in U: \tau_U(x)<n\right) \ge \frac1{\mu(U)}\mu\left(x\in U_I: \tau_{U_I}(x)<n\right)= \frac{\mu(U_I)}{\mu(U)}\mu_{U_I}\left( \tau_{U_I}<n\right),$$
and similarly for the upper bound.

For the second part, for an interval $U$, let $U_L$ and $U_R$ be the deepest cylinders such that $\Pi(U_L)$ and $\Pi(U_R)$ are adjacent to each other, $\Pi(U_L)$ to the left, so that $U\cap \Lambda\subset \Pi(U_L\cup U_R)$. 

\begin{claim}
There exists $B\in (0, 1)$ such that for $\{U_L^i\}_i$ the depth $d(U_L)+1$ cylinders contained in $U_L\cap \Pi^{-1}(U)$ and  $\{U_R^i\}_i$ the depth $d(U_R)+1$ cylinders contained in $U_R\cap \Pi^{-1}(U)$,
$$\mu\left(\left(\bigcup_iU_L^i\right)\cup\left(\bigcup_iU_R^i\right)\right)\ge B\mu(U).$$
Moreover adding in $U_L^\ell\subset U_L$ and $U_R^r\subset U_R$, the depth $d(U_L)+1$ and depth $d(U_R)+1$ cylinders respectively so that $\Pi^{-1}(U)\subset U_L^\ell\cup U_R^r\cup \left(\bigcup_iU_L^i\right)\cup\left(\bigcup_iU_R^i\right)$, 
$$\mu\left(U_L^\ell\cup U_R^r\cup \left(\bigcup_iU_L^i\right)\cup\left(\bigcup_iU_R^i\right) \right)\le \frac{\mu(U)}B.$$
\end{claim}

\begin{proof}[Proof of Claim.]  This claim follows from the cylinder structure.  It is sufficient to show that  
$$\frac{\mu\left(U_L^\ell\cup \left(\bigcup_iU_L^i\right)\right)}{\mu\left(\left(\bigcup_iU_L^i\right)\right)}$$
is uniformly bounded above (the case for the right-hand cylinder follows similarly).  First note that $\bigcup_iU_L^i\neq \es$ since otherwise $U_L$ is not the deepest cylinder we could have chosen.  Then since $U_L^\ell$ must be adjacent to some $U_L^i$, the bound follows directly from \eqref{eq:ratios}.
\end{proof}
\end{proof}

\begin{proof}[Proof of Proposition~\ref{prop:RTS}.]
The problems here are when there are multiple overlaps, so the worst case is when the symbolic model is the full shift, so we will assume this here.  Moreover we will assume that $\Lambda = I$: the adaptation to the general case goes through by intersecting intervals with $\Lambda$.

We first show what happens for the non-conditional measure case.

\begin{claim}
There exist constants $0<c(U,n)<1<C(U,n)<\infty$ with the property that 
$$c(U,n)\lambda_U^n \le \mu(\tau_U\ge n)\le C(U,n) \lambda_U^n$$ where $c(U,n), C(U,n) \to 1$ as $\mu(U) \to 0$ and $n \to \infty$.
\end{claim}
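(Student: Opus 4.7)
The plan is to express the probability $\mu(\tau_U \ge n)$ via the open transfer operator $\hLp_U$, apply the spectral decomposition from Corollary~\ref{decomp}, and exploit the fact that as $\mu(U)\to 0$ the leading eigendata of $\hLp_U$ converges to that of $\Lp$.

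First I would use the definition of $\tau_U$ to identify $\{\tau_U\ge n\} = f^{-1}(\hU^{n-2})$ for $n\ge 2$, so that $f$-invariance of $\mu$ gives $\mu(\tau_U\ge n)=\mu(\hU^{n-2})$. Combined with the identity \eqref{integral} applied to the density $g = d\mu/dm$, this yields
$$\mu(\tau_U\ge n)=\int_{\hU^{n-2}}g\,dm =\int \hLp_U^{n-1}g\,dm.$$

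Next I would insert the spectral decomposition $\hLp_U^{n-1}=\lambda_U^{n-1}\Pi_U+R_U^{n-1}$ (using $\Pi_U R_U = R_U\Pi_U=0$) to obtain
$$\mu(\tau_U\ge n)=\lambda_U^{n-1}\int\Pi_U g\,dm + \int R_U^{n-1}g\,dm.$$
For the main term, since $\Pi g=g$ and $\int g\,dm=1$, the perturbation bound \eqref{k1} gives
$$\left|\int\Pi_U g\,dm-1\right|=\left|\int(\Pi_U-\Pi)g\,dm\right|\le \invertiii{\Pi_U-\Pi}\cdot\norm{g}\le K_1\norm{g}\,m(U)^\eta.$$
For the error term, \eqref{k2} yields $\left|\int R_U^{n-1}g\,dm\right|\le K_2\norm{g}\beta_1^{n-1}$.

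Finally, dividing by $\lambda_U^n$ gives
$$\frac{\mu(\tau_U\ge n)}{\lambda_U^n}=\frac{1}{\lambda_U}\bigl(1+O(m(U)^\eta)\bigr)+O\!\left(\frac{\beta_1^{n-1}}{\lambda_U^n}\right).$$
Using \eqref{ev-bound} we have $\lambda_U\to 1$ as $\mu(U)\to 0$, and since $g$ is bounded away from $0$ the measure $m(U)$ tends to $0$ whenever $\mu(U)$ does; together with the uniform gap $\lambda_U>\beta_2>\beta_1$ from Corollary~\ref{decomp}(c), the ratio $(\beta_1/\lambda_U)^{n-1}$ tends to $0$ as $n\to\infty$. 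Both error factors therefore tend to $1$, giving the claim. The only subtle point is the interaction of the two limits: for fixed $n$ the remainder term need not be negligible compared to $\lambda_U^n\approx 1$, so one really needs $n\to\infty$ to make use of the spectral gap, while $\mu(U)\to 0$ controls the eigenprojection and eigenvalue perturbations; the uniform constants supplied by Corollary~\ref{decomp} ensure these two regimes interact cleanly over the whole family $\U(\delta_1)$.
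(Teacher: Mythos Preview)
Your proof is correct and follows essentially the same route as the paper: both express $\mu(\tau_U\ge n)$ as an integral of an iterate of $\hLp_U$ applied to $g$, insert the spectral decomposition $\lambda_U^k\Pi_U+R_U^k$, control $\int\Pi_U g\,dm-1$ via \eqref{k1} and $\int R_U^k g\,dm$ via \eqref{k2}, and conclude using $\lambda_U>\beta_2>\beta_1$. The only differences are cosmetic: you carefully justify the index shift $\mu(\tau_U\ge n)=\int\hLp_U^{n-1}g\,dm$ via $f$-invariance, whereas the paper writes the exponent as $n$; and you correctly state the projection bound with $m(U)^\eta$ as in \eqref{k1}, where the paper writes $\mu(U)^\eta$.
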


\begin{proof}
\begin{align*}
\mu(\tau_U\ge n)&=\int  \1_{\{\tau_{U}\ge n\}}\cdot g~dm =  \int \hLp_{U}^n(   g)~dm=\lambda_{U}^n\left( \int \Pi_{U}(g) + \lambda_{U}^{-n}R_{U}^n(g) dm\right) \\
&= \lambda_{U}^n\left( \int cg_{U}+ \lambda_{U}^{-n}R_{U}^n(g) dm\right)=\lambda_{U}^n\left(  c + \lambda_{U}^{-n}\int R_{U}^n(g) dm\right)
\end{align*}
for the constant $c=\int gdm_U$, by definition of $\Pi_U$.  Then
$$|c-1|=\left|\int cg_U~dm-\int g~dm\right| = \left|\int\Pi_U(g)~dm-\int \Pi(g)~dm\right| \le K_1\mu(U)^\eta \|g\|\le K_1K_4\mu(U)^\eta$$
by \eqref{k1} and \eqref{k4}.  Since also from \eqref{k2},
$$\left|\int R_U^n(g)~dm\right|\le \|R_U^n(g)\|\le K_2\beta_1^n\|g\| \le K_2K_4\beta_1^n,$$
we obtain 
$$\mu(\tau_U\ge n) \ge \lambda_U^n\left(1-K_1K_4\mu(U)^\eta-K_2K_4\left(\frac{\beta_1}{\beta_2}\right)^n\right),$$
completing the proof of the lower bound, and the upper bound is similar.
\end{proof}

The idea is to `add buffers' of length $m$ so that we lose some of the power coming from $\lambda_U$, but gain better distortion estimates using $\psi$-mixing \ref{a6}.

\textbf{Single cylinder case}

We start by assuming $\Pi^{-1}(U)$ is a cylinder, and generalise later on. To simplify notation, we switch to using the symbolic measure $\tilde{\mu}$ and assume $U$ itself is a cylinder.

Given $m \in \N$ and assuming $n\ge 2k+2m$, we write
\begin{align*}
\tilde{\mu}_U(\tau_U\ge n) & = \tilde{\mu}_U\left(\underline x=(x_0, x_1, \ldots) \in U: [u_0, \ldots, u_{k-1}]\notin \cup_{i=0}^{n-2}[x_{i+1}, \ldots, x_{i+k}]\right)\\
& = \tilde{\mu}_U\left(\underline x \in U: [u_0, \ldots, u_{k-1}]\notin \cup_{i=k+m}^{n-2}[x_{i+1}, \ldots, x_{i+k}]\right)\\
&\quad- \tilde{\mu}_U\big(\underline x \in U: [u_0, \ldots, u_{k-1}]\in \cup_{i=0}^{k+m-1}[x_{i+1}, \ldots, x_{i+k}] \\
&\hspace{4cm} \text{ \& } [u_0, \ldots, u_{k-1}]\notin \cup_{i=k+m}^{n-2}[x_{i+1}, \ldots, x_{i+k}]\big).
\end{align*}
We refer to these terms as $(I)$ and $(II)$ 

For $(I)$, using $\psi$-mixing \ref{a6},
\begin{align*}
\tilde{\mu}_U\left(\underline x \in U: [u_0, \ldots, u_{k-1}]\notin \cup_{i=k+m}^{n-2}[x_{i+1}, \ldots, x_{i+k}]\right) &\\
&\hspace{-7cm} =  \frac1{\tilde{\mu}(U)}\tilde{\mu}\left(\underline x \in U: [u_0, \ldots, u_{k-1}]\notin \cup_{i=k+m}^{n-2}[x_{i+1}, \ldots, x_{i+k}]\right) \\
&\hspace{-7cm} \ge (1-\gamma(m)) \tilde{\mu}\left(\underline x \in \Sigma: [u_0, \ldots, u_{k-1}]\notin \cup_{i=0}^{n-k-m-2}[x_{i+1}, \ldots, x_{i+k}]\right) \sim \lambda_U^{n-k-m-2},
\end{align*}
where the final step follows from the claim since
\begin{align*}
\mu\left(\underline x \in \Sigma: [u_0, \ldots, u_{k-1}]\notin \cup_{i=0}^{n-k-m-2}[x_{i+1}, \ldots, x_{i+k}]\right)&=\mu\left(\underline x \in \Sigma : \tau_U(x) \ge n-k-m-2\right).
\end{align*}
 Then for $(II)$ we look at the larger value 
 \begin{align*}
\tilde{\mu}_U\left(\underline x \in U: [u_0, \ldots, u_{k-1}]\in \cup_{i=0}^{k+m-1}[x_{i+1}, \ldots, x_{i+k}] \text{ \& } [u_0, \ldots, u_{k-1}]\notin \cup_{i=2k+2m}^{n-2}[x_{i+1}, \ldots, x_{i+k}]\right).
\end{align*}
There are three conditions here: being in $U$ as well as the two conditions on the cylinders.  By $\psi$-mixing \ref{a6} these are almost independent.  The problematic part here, the \emph{short returns}, is dealt with by the following claim.  We note that the arguments here are similar to those in \cite{AbaLam13}, but here we have more general systems (our measures need not be Bernoulli) and ultimately have to deal with more than one cylinder.

\begin{claim}  There is $\tilde\theta<1$ independent of $U$ such that
 \begin{align*}
 \tilde{\mu}_U\left(\underline x \in U: [u_0, \ldots, u_{k-1}]\in \left\{[x_{i+1}, \ldots, x_{i+k}]\right\}_{i=0}^{k+m-1}\right) \le \tilde\theta.
 \end{align*}
 \end{claim}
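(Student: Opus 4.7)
The plan is to decompose the short-return event according to the shift $j = i + 1 \in \{1, \ldots, k+m\}$ at which a second copy of $U$ is seen, and to treat \emph{overlapping} returns ($j < k$) and \emph{non-overlapping} returns ($k \le j \le k + m$) by separate arguments.

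For non-overlapping returns, $U \cap \sigma^{-j}U$ is the disjoint union over admissible words $w$ of length $j - k$ of the cylinders $[u w u]$. Two applications of the quasi-Bernoulli property \ref{a5} give $\tilde{\mu}([u w u]) \le C_*^2 \tilde{\mu}(U)^2 \tilde{\mu}([w])$; summing over $w$ yields $\tilde{\mu}(U \cap \sigma^{-j}U) \le C_*^2 \tilde{\mu}(U)^2$, hence
\begin{equation*}
\sum_{j=k}^{k+m} \tilde{\mu}_U(\sigma^j \underline{x} \in U) \le (m+1)\, C_*^2\, \tilde{\mu}(U),
\end{equation*}
which becomes arbitrarily small as $\delta_2 \to 0$ (using $\tilde{\mu}(U) \le C e^{-\omega k}$ from \eqref{eq:cyls}).

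The more delicate overlapping case $1 \le j \le k - 1$ is handled as follows. Given $\underline{x} \in U$, the event $\sigma^j \underline{x} \in U$ forces $U$ to admit the period $j$; when this holds, $U \cap \sigma^{-j}U = [u_0, \ldots, u_{k+j-1}]$, and quasi-Bernoulli gives
\begin{equation*}
\tilde{\mu}_U(\sigma^j \underline{x} \in U) \le C_* \tilde{\mu}([u_k, \ldots, u_{k+j-1}]) = C_* \tilde{\mu}([u_{k-j}, \ldots, u_{k-1}]),
\end{equation*}
the last equality by the periodicity. Let $p^*$ denote the minimum period of $U$. Fine--Wilf's theorem says every period $j \le k - p^*$ is a multiple of $p^*$, and iterating quasi-Bernoulli $l$ times along $j = l p^*$ yields the stronger geometric bound $\tilde{\mu}_U(\sigma^{l p^*} \underline{x} \in U) \le (C_* \alpha)^l$ with $\alpha := \tilde{\mu}([u_{k-p^*}, \ldots, u_{k-1}])$; these contribute at most $C_* \alpha / (1 - C_* \alpha)$ in total. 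At most $p^* - 1$ further periods lie in $(k - p^*, k - 1]$ and correspond to cylinders of length $\ge k - p^* + 1$, hence contribute negligibly by \eqref{eq:cyls}.

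The main obstacle is ensuring this geometric sum is uniformly below $1$. If $p^*$ exceeds a system-dependent threshold then $\alpha \le C e^{-\omega p^*} < 1/(2 C_*)$ by \eqref{eq:cyls} and $C_* \alpha/(1 - C_* \alpha) < 1$. When $p^* < p_0$ is very small, $U$ is essentially a power of a short word $v$ of length $p^*$; there are only finitely many such $v$, and since smallness of $\tilde{\mu}(U) = \tilde{\mu}([v^{k/p^*}])$ (controlled by $\delta_2$) forces $k$ large, the exceptional contributions from this finite collection can be bounded using the cylinder-ratio assumption \eqref{eq:ratios} together with the same geometric-sum idea. This short-return analysis closely parallels that of \cite{AbaLam13}, adapted here to the quasi-Bernoulli / $\psi$-mixing setting.
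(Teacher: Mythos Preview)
Your decomposition into non-overlapping ($k\le j\le k+m$) and overlapping ($1\le j\le k-1$) returns is exactly the paper's, and your treatment of the non-overlapping part matches. The genuine gap is in the overlapping case when the minimal period $p^*$ is small. Your argument there is a \emph{union bound}: you sum $\tilde\mu_U(\sigma^{lp^*}\underline x\in U)\le (C_*\alpha)^l$ over $l\ge 1$ with $\alpha=\tilde\mu([u_{k-p^*},\ldots,u_{k-1}])$, obtaining $C_*\alpha/(1-C_*\alpha)$. This requires $C_*\alpha<1$, and for the bound to be $<1$ one even needs $C_*\alpha<1/2$. When $p^*=1$ the word $v=(u_{k-1})$ is a single symbol, and nothing in the hypotheses prevents $\tilde\mu([u_{k-1}])\ge 1/(2C_*)$ (or even $\ge 1/C_*$), so the geometric series may exceed~$1$ or diverge. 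The appeal to \eqref{eq:ratios} does not help: that assumption compares \emph{adjacent} cylinders and says nothing about the absolute mass of a single $1$-cylinder.

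What the paper exploits, and what your proposal misses, is that these overlap events are \emph{nested} rather than merely summable: if $\underline x\in U$ and $\sigma^{lp^*}\underline x\in U$ for some $l\ge 2$ with $lp^*\le \lfloor(k-1)/2\rfloor$, then automatically $\sigma^{p^*}\underline x\in U$ as well. Hence the entire union over such $l$ equals the single event $\{\sigma^{p^*}\underline x\in U\}\cap U=[u_0,\ldots,u_{k+p^*-1}]$, whose conditional probability the paper bounds directly by
\[
1-\frac1{C_*}\bigl(1-\tilde\mu([u_{k-p^*},\ldots,u_{k-1}])\bigr)<1,
\]
a bound that is uniformly $<1$ because $\max_{|v|=p^*}\tilde\mu([v])<1$ (there are at least two symbols). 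Your Fine--Wilf observation is consistent with this nesting, but you then discard it by reverting to a union bound. Replacing the geometric sum with the single-event estimate above closes the gap.
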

 
 \begin{proof}[Proof of Claim]
 We first estimate
 \begin{align*}
 \tilde{\mu}_U\left(\underline x \in U: [u_0, \ldots, u_{k-1}]\in \left\{[x_{i+1}, \ldots, x_{i+k}]\right\}_{i=0}^{k-1}\right) \le \tilde\theta,
 \end{align*}
 so we are interested in estimates of the form 
 $$\tilde{\mu}_U\left([u_0, u_1, \ldots, u_i, u_0, \ldots, u_{k-1}]\right) = \frac1{\tilde{\mu}(U)}\tilde{\mu}\left([u_0, \ldots, u_{k-1}] \cap[u_0, u_1, \ldots, u_i, u_0, \ldots, u_{k-1}]\right).$$
We can deal with this case by case: if the intersection here is non-empty and $i=0$ then $u_j=u_0$ for all $1\le j<k$ and then
 \begin{align*}
\tilde{ \mu}_U\left([u_0,u_0,\ldots, u_{k-1}] \right)& = \tilde{\mu}_U\left([u_0,\ldots, u_{k-1}, u_{k-1}]\right)= 1 -  \frac1{\tilde{\mu}(U)}\tilde{\mu}\left(\cup_{u_{k-1}'\neq u_{k-1}}[u_0,\ldots, u_{k-1}, u_{k-1}']\right)\\
&  \le 1-\frac1{C_*} \tilde{\mu}\left(\cup_{u_{k-1}'\neq u_{k-1}} [u_{k-1}']\right) = 1-\frac1{C_*} \left(1-\tilde{\mu}([u_{k-1}])\right)<1
  \end{align*}
  by \ref{a5}.  Crucially here we do not have to consider any further overlaps, since in this case they are accounted for by the first one.
  
  Similarly, if the relevant intersection is non-empty for $i=1$ with $u_0\neq u_1$ but $u_0=u_2$, then in fact $u_i=u_j$ if and only if $i=j \mod 2$.  So 
  \begin{align*}
 \tilde{\mu}_U\left([u_0, u_1,u_0,\ldots, u_{k-1}] \right)& = \tilde{\mu}_U\left([u_0,\ldots, u_{k-1},u_{k-2},  u_{k-1}]\right)\\
 &\hspace{-2.5cm}= 1 -  \frac1{\tilde{\mu}(U)}\tilde{\mu}\left(\cup_{(u_{k-2}', u_{k-1}')\neq (u_k-2, u_{k-1})}[u_0,\ldots, u_{k-1}, u_{k-2}',  u_{k-1}']\right)\\
& \hspace{-2.5cm} \le 1-\frac1{C_*} \tilde{\mu}\left(\cup_{(u_{k-2}', u_{k-1}')\neq (u_k-2, u_{k-1})}[u_{k-2}', u_{k-1}']\right) = 1-\frac1{C_*} \left(1-\tilde{\mu}\left([u_{k-2}, u_{k-1}]\right)\right)<1.
  \end{align*}
 (Again, we only have to consider one overlap here.)
 
 We can proceed in the same way to deal with all possible periods up to length $\lfloor\frac{k-1}2\rfloor$ within $U$ since if there is a non-empty overlap of $[u_0, \ldots, u_{k-1}] \cap[u_0, u_1, \ldots, u_i, u_0, \ldots, u_{k-1}]$, but not for smaller $i$, then this argument says that the $j$th coordinate of $U$ determines the $(i+j+1)$st coordinate of $U$ for any $i$.  But this uniquely determines all possible overlaps if $i\le \lfloor\frac{k-1}2\rfloor$, and indeed all subsequent overlaps are accounted for by the first one.   On the other hand, if there an overlap for some $i> \lfloor\frac{k-1}2\rfloor$, but not before this, there may also be multiple overlaps for subsequent $\ell>i$ which are not already accounted for at time $i$.  However, in this case we can use the quasi-Bernoulli property \ref{a5} to give  
  \begin{align*}
 \tilde{\mu}_U\left(\underline x \in U: [u_0, \ldots, u_{k-1}]\in \left\{[x_{i+1}, \ldots, x_{i+k}]\right\}_{i=\lfloor\frac{k-1}2\rfloor}^{k-1}\right) & \\
 &\hspace{-5cm}\le  C_*\left(\tilde{\mu}[u_0,\ldots, u_{\lfloor\frac{k-1}2\rfloor}] +\cdots+ \tilde{\mu}[u_0, \ldots, u_{k-2}]\right)\le \frac{C_*e^{-\left(\lfloor\frac{k-1}2\rfloor\right)}}{1-e^{-\alpha}}.
\end{align*}
 
 Since for $k$ large, this estimate is very small, we see that the the worst estimate is essentially for the first case we dealt with here, from which we would derive our new $\tilde\theta<1$. 
 
 However, we also need to deal with the cases
 \begin{align*}
 \tilde{\mu}_U\left(\underline x \in U: [u_0, \ldots, u_{k-1}]\in \left\{[x_{i+1}, \ldots, x_{i+k}]\right\}_{i=k-1}^{k+m-1}\right).
 \end{align*}
 But since these cases consist of estimates of the form
$$\tilde{\mu}_U\left(\cup_{b_0, \ldots, b_{\ell-1}\in \I} [u_0, \ldots, u_{k-1}, b_0, \ldots, b_{\ell-1}, u_0, \ldots, u_{k-1}]\right)\le C_*\tilde{\mu}(U)$$
 for $0\le \ell\le m-1$, which sum to $C_*m\tilde{\mu}(U)$.  So for $U$ small, this doesn't affect $\tilde\theta$ very much, so we abuse notation and keep this the same.  \end{proof}

 Putting together the estimates for $(II)$ we obtain $\lambda_U^{n-2-2k-2m}(1+\gamma(m))^2\tilde \theta$. Here we have used that
$$\left(\underline x \in \Sigma: [u_0, \ldots, u_{k-1}]\notin \cup_{i=2k+2m}^{n-2}[x_{i+1}, \ldots, x_{i+k}]\right)$$
is just a preimage of the set $\{\underline x \in \Sigma: \tau_U(x) \ge n-2-2k-2m\}$.
 Therefore the overall asymptotic estimate is

$$\lambda_U^{n-2-2k-2m}\left(\lambda_U^{m+k}(1-\gamma(m))-(1+\gamma(m))^2\tilde \theta\right).$$

Since $\lambda_U^{n-2-2k-2m}\ge \lambda_U^{n}$, it is sufficient to prove $\left(\lambda_U^{m+k}(1-\gamma(m))-(1+\gamma(m))^2\tilde \theta\right)>0$.  This follows since we can fix some large enough $m$ so that $\gamma(m)$ is sufficiently small, and then use \eqref{ev-bound} which, since also $\tilde{\mu}(U) \le Ce^{-\omega k}$,  implies $\lambda_U\ge 1-K_3\tilde{\mu}(U)$, so $\lambda_U^{m+k} \to 1$ as $\tilde{\mu}(U)\to 0$.

\textbf{Approximating intervals by cylinders}

Now in the case that $U$ is not a cylinder, but $\Pi(U)$ is an interval, Lemma~\ref{lem:approx} implies that it is enough to prove for certain unions of cylinders.  We will simplify the situation by just looking at a union of two cylinders, but notice that since the lemma tells us that we only have two families of cylinders, of depths $d_L+1$ and $d_R+1$ respectively, where the $d_L$-th (or $d_R$-th) prefix is the same for all members of the family, the proofs will go through for these unions too.

Suppose that $U$ and $V$ are cylinders of depth $k_U$ and $k_V$ respectively, so $U=[u_0,\ldots, u_{k_U-1}]$ and $V=[v_0, \ldots, v_{k_V-1}]$.  Assume $k_U\ge k_V$.  To make similar estimates to the single cylinder case, we will replace $k$ with $k_U$ in $(I)$ and $(II)$.  The estimate for $(I)$ follows as in the single cylinder case, so we focus on $(II)$.  To estimate the short returns we will be dealing with estimates of the form 

\begin{enumerate}
\item[(i)] $\mu_{U\cup V} \left([u_0, \ldots, u_i, u_0, \ldots, u_{k_U-1}]\right)$ for $ i\le k_U-1$;
\item[(ii)] $\mu_{U\cup V} \left([u_0, \ldots, u_i, v_0, \ldots, v_{k_V-1}]\right)$  for  $i\le k_U-1$;
\item[(iii)] $\mu_{U\cup V} \left([v_0, \ldots, v_i, u_0, \ldots, u_{k_U-1}]\right)$ for  $i\le k_V-1$;
\item[(iv)] $\mu_{U\cup V} \left([v_0, \ldots, v_i, v_0, \ldots, v_{k_V-1}]\right)$  for  $i\le k_V-1$;
\item[(v)] $\mu_{U\cup V} \left(\cup_{b_p\in \A}[v_0, \ldots, v_{k_V-1}, b_0, \ldots, b_{j-1}, u_0, \ldots, u_{k_U-1}]\right)$ and $k_V\le i\le k_U-1$;
\item[(vi)] $\mu_{U\cup V} \left(\cup_{b_p\in \A}[v_0, \ldots, v_{k_V-1}, b_0, \ldots, b_{j-1}, v_0, \ldots, v_{k_V-1}]\right)$ and $k_V\le i\le k_U-1$.
\end{enumerate}

As in the single cylinder case, for $i$ small we have to make careful estimates, but for $i$ large a rough estimate suffices (in particular, the idea for dealing with $i$ around $k+m-1$ is the same as in the single cylinder case, so we will not deal with that here).  Indeed the estimates (v) and (vi) the estimates are $\le \frac{CC_*\mu(U)\mu(V)e^{-j\omega}}{\mu(U)+\mu(V)}$, which is very small, and sum nicely.  Estimates for (i) and (iv) follow as in the simple cylinder case.
It remains to consider (ii) and (iii). 

As in the single cylinder case, the naive estimate takes over for $i$ large, so let us assume:
\begin{equation}
i\le \frac{k_V}2 \le \frac{k_U}2
\label{eq:smalli}
\end{equation}

We give some details for Case (iii). The first $i$ with non-empty overlap is $[v_0, \ldots, v_i, u_0, \ldots, u_{k_U-1}]\subset U\cup V$ with $i\le k_V-1$.

\begin{itemize}
\item 
First assume $[v_0, \ldots, v_i, u_0, \ldots, u_{k_U-1}]\subset V$.  \\Then $u_0=v_{i+1}, u_1=v_{i+2}, \ldots, u_{j} = v_{i+j+1}, \ldots, u_{k_V-i-2} =v_{k_V-1}$.

Now suppose $[v_0, \ldots, v_i, v_{i+1}, u_0, \ldots, u_{k_U-1}] \neq \es$.  Then if this set is in $V$ then $u_0=v_{i+2}=u_1=v_{i+1}, \ldots, u_j=v_{i+2+j} = u_{j+1}=v_{i+j+1}$ for $j\le \min\{k_U-1, k_V-2-i\}$, i.e., $v_{i+1}= \cdots = v_{i+j+1}=u_0$.  Hence any relevant non-empty intersection will already be contained in $[v_0, \ldots, v_i, u_0, \ldots, u_{j}] $   for $j= \min\{k_U-1, k_V-2-i\}\ge \frac{k_V}2-2$.

If instead we consider $\es\neq [v_0, \ldots, v_i, v_{i+1}, u_0, \ldots, u_{k_U-1}] \subset U$ then this also forces $u_{j+i+1}=u_j=u_{j+i+1}$ for $j\le k_U-i-2$, i.e.,  $u_0= u_1=\cdots = u_{k_U-i-2}$. Hence any relevant non-empty intersection will be contained in $[v_0, \ldots, v_i,  u_0, \ldots, u_{k_U-i-2}]$.

We can proceed as before to cover intersections  $[v_0, \ldots, v_i, v_{i+1}, \ldots, v_{i+j}, u_0, \ldots, u_{k_U-1}]$ for $i+j\le \frac{k_V}2$: we can always cover all the resulting intersections with $[v_0, \ldots, v_i, u_0, \ldots, u_\ell]$ for $\ell \ge \frac{k_V}2$.

\item
Now, instead, start by assuming  $[v_0, \ldots, v_i, u_0, \ldots, u_{k_U-1}]\subset U$, then $u_0=u_{i+1}, \ldots, u_{k_U-i-2} = u_{k_U-1}$.

If also $\es \neq [v_0, \ldots, v_i, v_{i+1},  u_0, \ldots, u_{k_U-1}]\subset U$ then as above all further relevant non-empty intersections will already be contained in $[v_0, \ldots, v_i, u_0, \ldots, u_{k_U-i-3}]$

If instead we assume $\es \neq [v_0, \ldots, v_i, v_{i+1},  u_0, \ldots, u_{k_U-1}]\subset V$ then we can deduce that $u_{i+j} = v_{i+j}$ for $0\le j\le k_V-1-i$ (we already had $u_j=v_j$ for $0\le j\le k_U-1$).  This again puts us in the single cylinder case for the first $\frac{k_V}2-1$ intersections.

Again we can proceed as before to cover all other intersections \\ $[v_0, \ldots, v_i, v_{i+1}, \ldots, v_{i+j}, u_0, \ldots, u_{k_U-1}]$ for $i+j\le \frac{k_V}2$.
\end{itemize}

Case (ii) follows similarly.
\end{proof}

\section{Hitting times estimates} \label{section:hit}

In this section, we obtain the following estimates on the expected hitting times.

\begin{theorem}\label{hit}
Assume $(f,\mu)$ satisfies \ref{a1}-\ref{a5}, \eqref{measdiam} and \eqref{bd}, so that by Proposition \ref{prop:u} we can choose $\U(\delta_3)$ which satisfies (U). There exist $0<c<C<\infty$, $\delta_3^* \in (0,\delta_3)$ such that for all $U \in \U(\delta_3^*)$,
$$\frac{c}{\mu(U)} \le \E(\tau_{U}) \le \frac{C}{\mu(U)} .$$
\end{theorem}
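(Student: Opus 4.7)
My plan is to use the spectral decomposition of $\hLp_U$ from Corollary~\ref{decomp} to obtain an asymptotic formula for $\mu(\tau_U \ge n)$, then sum to compute $\E(\tau_U) = \sum_{n \ge 1} \mu(\tau_U \ge n)$. By \eqref{integral}, $\mu(\tau_U \ge n) = \int_{\hU^{n-1}} g\, dm = \int \hLp_U^n g\, dm$, and applying $\hLp_U^n = \lambda_U^n \Pi_U + R_U^n$ yields
\begin{equation*}
\mu(\tau_U \ge n) = c_U \lambda_U^n + \int R_U^n g\, dm,
\end{equation*}
where $c_U := \int \Pi_U g\, dm$. Since $\int \Pi g\, dm = \int g\, dm = 1$, the bounds \eqref{k1} and \eqref{k4} give $|c_U - 1| \le K_1 K_4 \mu(U)^\eta$, while \eqref{k2} and \eqref{k4} give $\left|\int R_U^n g\, dm\right| \le K_2 K_4 \beta_1^n$. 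Summing over $n \ge 1$ therefore produces
\begin{equation*}
\E(\tau_U) = \frac{c_U \lambda_U}{1 - \lambda_U} + O(1),
\end{equation*}
with the $O(1)$ term uniform over $U \in \U(\delta_1)$. The problem reduces to establishing $1 - \lambda_U \asymp \mu(U)$ uniformly.

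The upper estimate $1 - \lambda_U \le K_3 \mu(U)$ is exactly \eqref{ev-bound}. Choosing $\delta_3^* \in (0, \delta_1)$ small enough that $\mu(U) \le \|g\|_\infty C_m (2T\delta_3^*)^s$ (via \eqref{measdiam} and the boundedness of $g \in \B$) forces both $c_U$ and $\lambda_U$ uniformly close to $1$. Then $\E(\tau_U) \ge c_U \lambda_U / (K_3 \mu(U)) - O(1) \ge c/\mu(U)$, giving the lower bound in the theorem.

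For the matching lower estimate $1 - \lambda_U \ge c' \mu(U)$ I combine Kac's lemma, $\E_U(\tau_U) = 1/\mu(U)$ (available since $\mu$, being the unique ACIP, is ergodic), with Proposition~\ref{prop:RTS}:
\begin{equation*}
\frac{1}{\mu(U)} = \sum_{n=1}^{\infty} \mu_U(\tau_U \ge n) \ge \sum_{n \ge \lceil 1/\mu(U) \rceil} c \lambda_U^n = \frac{c\, \lambda_U^{\lceil 1/\mu(U) \rceil}}{1 - \lambda_U}.
\end{equation*}
Since $\lambda_U \ge 1 - K_3 \mu(U)$ by \eqref{ev-bound}, the factor $\lambda_U^{\lceil 1/\mu(U) \rceil}$ is bounded below uniformly by roughly $e^{-K_3}$ once $\mu(U)$ is small, so rearranging yields $1 - \lambda_U \ge c' \mu(U)$ uniformly in $U \in \U(\delta_3^*)$. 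Inserting this into the displayed formula for $\E(\tau_U)$ produces the upper bound $\E(\tau_U) \le C/\mu(U)$.

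The genuinely substantive input is Proposition~\ref{prop:RTS}: without it, \eqref{ev-bound} alone cannot rule out $1 - \lambda_U \ll \mu(U)$, which would make $\E(\tau_U)$ much larger than $1/\mu(U)$. Once Proposition~\ref{prop:RTS} is in hand, the remaining work is a routine spectral summation plus careful tracking of the uniformity of the constants from Corollary~\ref{decomp} across the family $\U(\delta_3^*)$; this uniformity is already built into Corollary~\ref{decomp} via the uniformity in Proposition~\ref{ly}.
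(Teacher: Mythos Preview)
Your proposal is correct. The spectral summation reducing $\E(\tau_U)$ to $c_U\lambda_U/(1-\lambda_U)+O(1)$ is essentially the paper's Lemma~\ref{uniformhit}, and the upper bound $1-\lambda_U\le K_3\mu(U)$ is exactly \eqref{ev-bound} in both arguments. The genuine difference is in the lower bound $1-\lambda_U\ge c'\mu(U)$: the paper proves this as Proposition~\ref{eig-bound} by pushing through the Keller--Liverani perturbation formula
\[
\frac{1-\lambda_U}{\mu(U)}=\frac{\bigl(1-\sum_{k=0}^{N_U-1}\lambda_U^{-k}q_{k,U}\bigr)+\kappa_{N_U}'}{(1+O(\eta_U))(1+N_U O(\eta_U))},
\]
identifying the numerator with $\mu_U(\tau_U\ge N_U+1)+\kappa_{N_U}'$ and then invoking Proposition~\ref{prop:RTS}. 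You bypass this machinery entirely: Kac's formula $\sum_{n\ge1}\mu_U(\tau_U\ge n)=1/\mu(U)$ together with Proposition~\ref{prop:RTS} on the tail $n\ge 1/\mu(U)$ gives $1-\lambda_U\ge c\mu(U)\lambda_U^{\lceil1/\mu(U)\rceil}$, and \eqref{ev-bound} makes the last factor uniformly bounded below. Your route is shorter and more transparent; the paper's route has the advantage of yielding the full Keller--Liverani asymptotic for $(1-\lambda_U)/\mu(U)$, which is of independent interest but not needed for the theorem as stated. Both proofs rest on Proposition~\ref{prop:RTS} as the essential ingredient, exactly as you identify. (Two cosmetic points: your constant $K_4$ in the bounds on $c_U-1$ and $\int R_U^n g\,dm$ should be $\|g\|$, since \eqref{k4} bounds $\|g_U\|$ rather than $\|g\|$; and note that Proposition~\ref{prop:RTS} is stated under hypotheses slightly stronger than those of Theorem~\ref{hit}, so strictly speaking both your argument and the paper's inherit those extra assumptions.)
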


 In order to obtain uniform estimates on $\E(\tau_{U})$ we write $\E(\tau_{U})=\sum_{n\ge 1}\mu(\tau_{U}\ge n)$ and observe that
$$\mu(\tau_{U}\ge n)=\int  \1_{\{\tau_{U}\ge n\}}\cdot g~dm =  \int \hLp_{U}^n(   g)~dm$$
and so it will be sufficient to obtain uniform estimates on $ \int \hLp_{U}^n(   g)~dm$.

\begin{lemma}\label{uniformhit}  There exist constants $0<c<C<\infty$ and $0<\delta_2^*\le \delta_1$ such that for all $U \in \U(\delta_2^*)$ and $n \in \N$,
 \begin{align*}
c\lambda_{U}^n \le \int \hLp_{U}^n(   g)~dm \le C\lambda_{U}^n.
\end{align*}
\end{lemma}

\begin{proof}
By Corollary \ref{kl}, for all $U \in \U(\delta_1)$.
\begin{align*}
\int \hLp_{U}^n(g) dm&=\lambda_{U}^n\left( \int \Pi_{U}(g) + \lambda_{U}^{-n}R_{U}^n(g) dm\right) = \lambda_{U}^n\left( \int c_{U}g_{U}+ \lambda_{U}^{-n}R_{U}^n(g) dm\right)\\
&=\lambda_{U}^n\left(  c_{U} + \lambda_{U}^{-n}\int R_{U}^n(g) dm\right)
\end{align*}
where we have used that $\int g_{U}dm=1$. By \eqref{k2},
$$\left| \int R_{U}^n(g)dm\right| \le \norm{R_{U}^n(g)} \le K_2 \beta_1^n\norm{g}.$$
By \eqref{k1}, 
$$|1-c_{U}|=\left|\int\Pi(g)dm-\int \Pi_U(g)dm\right| \le K_1 m( U)^\eta\norm{g}.$$
Now, we can choose $N \in \N$  so that $K_2\norm{g}(\beta_1/\beta_2)^N<1$. Now choose $\delta_2^* \le \delta_1$ such that  $M:= \sup_{U \in \U(\delta_2^*)}K_1\norm{g} m( U)^\eta<1-K_2\norm{g}(\beta_1/\beta_2)^N$. Then for all $n \in \N$ and $U \in \U(\delta_2^*)$,
\begin{align*}
\int \hLp_{U}^n(g) dm&\le\lambda_{U}^n\left(1+M+K_2(\beta_1/\beta_2)^n\norm{g}\right) \\
&\le \lambda_{U}^n\left(1+M+K_2\norm{g}\right) .
\end{align*}
Moreover, since $\mathrm{ess} \inf g=\frac{1}{C_1}>0$ and $|g_U|_\infty\leq K_4$, one has $g \geq \frac{1}{C_1 K_4}g_U$ so that for any $n \geq 0$,
\begin{align*}
\int \hLp_{U}^n(g) dm=\int_{\hU^{n-1}} g d m \geq \frac{1}{C_1K_4} \int_{\hU^{n-1}} g_U dm=\frac{1}{C_1K_4} \int \hLp_U^n g_U dm=\frac{1}{C_1K_4}\lambda_U^n.
\end{align*}
\end{proof}

Therefore, to complete the proof of Theorem \ref{hit}, we require estimates on $\lambda_U$. This is provided by the following result.

\begin{proposition} \label{eig-bound}
There exist constants $0<c<C<\infty$ and $\delta_3 \in (0,\delta_2)$  such that for all  $U \in \U(\delta_3)$,
$$c\mu(U) \le 1- \lambda_U \le C \mu(U).$$
\end{proposition}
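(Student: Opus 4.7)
The upper bound $1 - \lambda_U \le C\mu(U)$ is already delivered by \eqref{ev-bound} in Corollary~\ref{decomp}, so the real content is the lower bound. My plan is to combine Kac's lemma with the return-time estimate from Proposition~\ref{prop:RTS}, and then close up using the upper bound on $1-\lambda_U$ that we already have.

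First I would invoke Kac's lemma for the ergodic system $(f,\mu)$: since $\mu$ is an ergodic invariant probability measure (quasi-compactness of $\Lp$ with leading eigenvalue $1$ simple gives ergodicity), for any measurable $U$ with $\mu(U)>0$ one has $\E_{\mu_U}(\tau_U) = 1/\mu(U)$, where $\mu_U = \mu|_U/\mu(U)$. Writing the expectation as a tail sum and truncating,
\begin{equation*}
\frac{1}{\mu(U)} \;=\; \sum_{n\ge 1}\mu_U(\tau_U\ge n) \;\ge\; \sum_{n\ge \lceil 1/\mu(U)\rceil}\mu_U(\tau_U\ge n).
\end{equation*}
Now I apply Proposition~\ref{prop:RTS}: there exist $c>0$ and $\delta_2\in(0,\delta_1)$ such that for all $U\in\U(\delta_2)$ and all $n\ge 1/\mu(U)$, $\mu_U(\tau_U\ge n)\ge c\lambda_U^n$. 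Plugging this in and summing the geometric series yields
\begin{equation*}
\frac{1}{\mu(U)} \;\ge\; c\sum_{n\ge \lceil 1/\mu(U)\rceil}\lambda_U^n \;=\; \frac{c\,\lambda_U^{\lceil 1/\mu(U)\rceil}}{1-\lambda_U},
\end{equation*}
which rearranges to
\begin{equation*}
1-\lambda_U \;\ge\; c\,\mu(U)\,\lambda_U^{\lceil 1/\mu(U)\rceil}.
\end{equation*}

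To finish, I need $\lambda_U^{\lceil 1/\mu(U)\rceil}$ to be bounded below by a positive constant uniformly in $U\in\U(\delta_3)$ for some $\delta_3\in(0,\delta_2)$ small enough. This is exactly where the already-proved upper bound \eqref{ev-bound} helps: $\lambda_U \ge 1 - K_3\mu(U)$, so
\begin{equation*}
\lambda_U^{\lceil 1/\mu(U)\rceil} \;\ge\; (1-K_3\mu(U))^{\lceil 1/\mu(U)\rceil} \;\longrightarrow\; e^{-K_3}
\end{equation*}
as $\mu(U)\to 0$. Choosing $\delta_3\in(0,\delta_2)$ small enough so that $\mu(U)$ is uniformly small for $U\in\U(\delta_3)$ (using that $\mu(U)\le C_1 m(U)$ with $m(U)\lesssim \delta^s$ from \eqref{measdiam}), we conclude $\lambda_U^{\lceil 1/\mu(U)\rceil}\ge e^{-K_3}/2$, say. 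Combined with the previous display this gives $1-\lambda_U \ge c'\mu(U)$ for a uniform $c'>0$, which is the required lower bound.

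The main obstacle is the return-time lower bound in Proposition~\ref{prop:RTS}, but that is already available. The remaining steps are a routine Kac-plus-geometric-series calculation coupled with the mild observation that $\lambda_U^{1/\mu(U)}$ stays bounded away from zero because $1-\lambda_U$ is at most a constant times $\mu(U)$. The only care needed is the uniformity across the whole family $\U(\delta_3)$, which is built into all the ingredients (uniform spectral gap, uniform return-time estimate, and the uniform upper bound on $1-\lambda_U$).
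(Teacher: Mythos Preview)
Your argument is correct and is a genuinely different, more elementary route than the paper's. The paper proceeds by verifying that uniform versions of conditions (A1)--(A6) of Keller--Liverani hold for the family $\{\hLp_U\}_{U\in\U(\delta_1)}$ and then invoking their perturbation formula \eqref{kl-eqn}, which expresses $(1-\lambda_U)/\mu(U)$ as a ratio whose numerator involves the short-return probabilities $q_{k,U}=\mu(E_U^k)/\mu(U)$; that numerator is then bounded below using Proposition~\ref{prop:RTS}. Your approach bypasses all of this machinery: Kac's formula $\sum_{n\ge1}\mu_U(\tau_U\ge n)=1/\mu(U)$, truncated at $N_U=\lceil 1/\mu(U)\rceil$ and combined directly with the tail bound of Proposition~\ref{prop:RTS}, gives the one-line inequality $1-\lambda_U\ge c\,\mu(U)\,\lambda_U^{N_U}$, and the already-proved upper bound $1-\lambda_U\le K_3\mu(U)$ then makes $\lambda_U^{N_U}$ uniformly bounded below. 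Both proofs ultimately rest on Proposition~\ref{prop:RTS}, but yours is shorter and avoids the verification of (A1)--(A6) and the analysis of \eqref{kl-eqn}. One cosmetic point: the constant in your ``$\mu(U)\le C_1 m(U)$'' should be $\sup g$ (finite since $g\in\B$), not $C_1=1/\essinf g$ from Lemma~\ref{pert}, which gives the opposite inequality; the conclusion $\mu(U)\to 0$ is unaffected.
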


\begin{proof}

The upper bound follows from \eqref{ev-bound}. For the lower bound we adapt some arguments from \cite{kl2}. In  \cite{kl2} and \cite{BruDemTod18} the limit $\lim_{\delta \to 0} \frac{1-\lambda_{U_\delta}}{\mu(U_\delta)}$ is computed for $\bigcap_{\delta>0}U_\delta =\{z\}$. We require something less precise (only a lower bound on the ratio $\frac{1-\lambda_{U_\delta}}{\mu(U_\delta)}$), but it needs to be uniform over $U \in \U(\delta_3)$ for some $\delta_3>0$.  

We begin by verifying that uniform versions of (A1)-(A6) from \cite{kl2} hold over our family of operators $\{\hLp_U\}_{U \in \U(\delta_1)}$. (A1)-(A2) is precisely the spectral decomposition from Corollary \ref{decomp}. (A3) requires that
$$\sum_{n=0}^\infty \sup_{U \in \U(\delta_1)} \frac{\norm{R_U^n}}{\lambda_U^n}<\infty$$
which holds by \eqref{k2}.  (A4) requires that $\int g_Udm=1$, which we have, and that
$$\sup_{U \in \U(\delta_1)}\norm{g_U}<\infty$$
which follows from \eqref{k4}.
  (A5) requires that $\sup_{U \in \U(r)} \eta_U \to 0$ where
\begin{equation}\label{eta}
\eta_U:= \sup_{\norm{\psi}=1} \left|\int (\Lp \psi-\hLp_U\psi)dm\right|,
\end{equation}
which holds by Lemma \ref{pert} since $\eta_U \le m(U)$. Finally (A6) requires that
$$\eta_U\norm{(\Lp-\hLp_U)(g)} \le C'\mu(U)$$
for some constant $C'$ which doesn't depend on $U \in \U(\delta_1)$. 
Since
$$\norm{(\Lp-\hLp_U)(g)}=\norm{\Lp(\1_Ug)} \le \norm{\Lp}\norm{\1_Ug} \le\norm{\Lp}\norm{g}$$
we indeed have
$$\eta_U\norm{(\Lp-\hLp_U)(g)}\le m(U)\norm{\Lp}\norm{g} \le C_1\norm{\Lp}\norm{g} \mu(U)$$
as desired.

The above implies that \cite[Lemma 6.1]{kl2} holds for a uniform constant $C$. In particular, defining for each $n \in \N$ 
\begin{equation}\label{kappa}
\kappa_n:=K_2 \sum_{k=n}^\infty  \left(\frac{\beta_1}{\beta_2}\right)^n,
\end{equation}
 there exists $C>0$ such that for all $U \in \U(\delta_1)$,
\begin{enumerate}
\item [(a)] $|1-\int gdm_U| \le Cm(U)$ where $m_U$ is given by Corollary \ref{decomp}(a),
\item [(b)] $\frac{\norm{R_U^ng}}{\lambda_U^n} \le C\kappa_n(\norm{(\Lp-\hLp_U)(g)}+|1-\lambda_U|)$.
\end{enumerate}

Following the proof of Theorem 2.1 in \cite{kl2} now yields that for any $n \in \N$,
\begin{equation} \label{kl-eqn}
\frac{1-\lambda_U}{\mu(U)}=\frac{\left(1-\sum_{k=0}^{n-1}\lambda_U^{-k}q_{k,U} \right)+\kappa_n'}{(1+O(\eta_U))(1+nO(\eta_U))}
\end{equation}
where $\kappa_n'=O(\kappa_n)$ and
$$q_{k,U}=\frac{m((\Lp-\hLp_U)\hLp_U^k(\Lp-\hLp_U)g)}{m((\Lp-\hLp_U)g)}=\frac{\mu(E_U^k)}{\mu(U)}$$
for
$$E_U^k:=\{x \in U: f^i(x) \notin U, \; i=1, \ldots, k, \; f^{k+1}(x) \in U\},$$
see  \cite{BruDemTod18}. 

Recall that by $\kappa_n'=O(\kappa_n)$ we mean that for some constant $C''>0$ which is independent of $n$ (and $U$), $|\kappa_n'| \le C''|\kappa_n|$. Let $C$ be any constant such that 
$$|\kappa_n'|\le  C (\beta_1/\beta_2)^n$$
for all $ n \in \N$. Choose $\delta_2' \in (0,\delta_2]$ such that $$\rho:= \frac{\beta_1}{\beta_2\inf_{U \in \U(\delta_2')}\lambda_U}<1,$$
which is possible by \eqref{ev-bound}, and such that $\inf_{U \in \U(\delta_2')}\lambda_U\ge \beta_2$. Let $c$ be given by Proposition \ref{prop:RTS}. Let $N$ be sufficiently large that $C\rho^N<c\beta_2$. We claim that there exists $c'>0$ and $0<\delta_2'\le \delta_2$ such that for all $U \in \U(\delta_2')$ with $N_U:=1/\mu(U)>N$,
\begin{equation} \label{q-eqn}
1-\sum_{k=0}^{N_U-1} \frac{\mu(E_U^k)}{\mu(U)} +\kappa_{N_U}' \ge c'.
\end{equation}
To see this note that the left hand side of \eqref{q-eqn} equals
\begin{align*}
1-\sum_{k=0}^{N_U-1} \frac{\mu(E_U^k)}{\mu(U)} +\kappa_{N_U}' &=1-\frac{\mu(x \in U: \tau_U(x) \le N_U)}{\mu(U)}+\kappa_{N_U}' = 1-\mu_U(\tau_U \le N_U)+\kappa_{N_U}'\\
&= \mu_U(\tau_U\ge N_U+1)+\kappa_{N_U}'.
\end{align*}
By Proposition \ref{prop:RTS}, $\mu_U(\tau_U\ge N_U+1)\ge c\lambda_U^{N_U+1}$ therefore 
$$\mu_U(\tau_U\ge N_U+1)+\kappa_{N_U}' \ge c\lambda_U^{N_U+1}+\kappa_{N_U}' \ge \lambda_U^{N_U}(c\lambda_U-C\left(\beta_1/\beta_2\lambda_U\right)^{N_U}).$$
 All of this implies that for all $U \in \U(\delta_2')$,
$$1-\sum_{k=0}^{N_U-1} q_{k,U} +\kappa_{N_U}' \ge \lambda_U^{N_U}(c\beta_2-C\rho^{N_U})>0,$$ 
proving \eqref{q-eqn}. Finally, we choose $0<\delta_3\le \delta_2'$ sufficiently small that the denominator in \eqref{kl-eqn}, 
$$\inf_{U \in \U(\delta_3)}(1+O(\eta_U))(1+N_UO(\eta_U))>0$$
(possible since $\eta_U \le 1/m(U)$ and noting that it is clearly uniformly bounded above too) and such that the numerator 
$$\inf_{U \in \U(\delta_3)}\left(1-\sum_{k=0}^{N_U-1}\lambda_U^{-k}q_{k,U} \right)+\kappa_{N_U}'>0$$
which is possible by \eqref{q-eqn} and \eqref{ev-bound}.  By \eqref{kl-eqn} this completes the proof of the proposition.
\end{proof}

\begin{proof}[Proof of Theorem \ref{hit}]
Since 
$$\E(\tau_{U})=\sum_{n\ge 1}\mu(\tau_{U}\ge n)=\sum_{n\ge 1}\int \hLp_{U}^n(   g)~dm,$$
it follows from Lemma \ref{uniformhit} that for all $r \in (0,\delta_2]$ and $U \in \U(\delta_2^*)$,
$$\frac{c}{1-\lambda_{U}} \le \E(\tau_{U}) \le \frac{C}{1-\lambda_{U}}$$
for some uniform constants $0<c<C$. Take $\delta_3^*=\min\{\delta_2^*, \delta_3\}$ where $\delta_3$ is given by Proposition \ref{eig-bound}. The proof now follows from Proposition \ref{eig-bound}.
\end{proof}

\section{Expected cover time: uniformly hyperbolic case} \label{section:matthews}

In this section we prove Theorems \ref{cover} and \ref{cover2}. This will be done by generalising the `Matthews method' for Markov chains (see \cite{matthews} or  \cite[Chapter 11]{peres}) which will establish a relationship between the expected cover time and the expected hitting times, at which point we can apply our estimates from \S \ref{section:hit}. Recall from \S \ref{section:uniform} that there exists a subshift $\sigma:\Sigma \to \Sigma$ and a projection $\Pi:\Sigma \to I$ such that $f\circ \Pi=\Pi \circ \sigma$. Further recall that there exists a quasi-Bernoulli measure $\tmu$ on $\Sigma$ such that $\Pi_*\tmu=\mu$. In this section we will primarily work with symbolic versions of cover and hitting times, which we describe below.

Let $\mathcal{P}$ be a finite set of subsets of $\Sigma$ where (a) for each $P \in \mathcal{P}$, there exists a finite or countable set of words $P^* \subset \Sigma^*$ such that $P=\bigcup_{\i \in P^*} [\i]$, and (b) for distinct $P, Q \in \mathcal{P}$, $P \cap Q=\emptyset$.\footnote{Note that here we only require disjointness as subsets of $\Sigma$, rather than disjointness of their $\Pi$-projections to $\R$.} Note that $\mathcal{P}$ is not necessarily a partition of $\Sigma$ since there is no requirement that $\bigcup_{P \in \mathcal{P}} P=\Sigma$. We will call $\mathcal{P}$ a subpartition of $\Sigma$ if it satisfies (a) and (b). For $\i \in \Sigma$ we let $\tau_{\mathcal{P}}(\i)$ denote the first time $n$ that $\{\i, \sigma(\i), \ldots, \sigma^n(\i)\}$ has visited every element in $\mathcal{P}$. For $P \in \mathcal{P}$ we let $\tau_P: \Sigma \to \N_0 \cup \{\infty\}$ be defined as 
$$\tau_P(\i)= \inf\{n \geq 0:  \sigma^n(\i) \in P\} $$
i.e. the first $n \geq 0$ such that $\sigma^n(\i)$ begins with a word in $P^*$.

\begin{remark} \label{hitesc}
Since $\tau_P$ may take the value 0, strictly speaking $\tau_P$ is not a hitting time in the same sense as \eqref{hitting}. Let us briefly write $\tilde{\tau}_P(\i)= \inf\{n \geq 1:  \sigma^n(\i) \in P\} $. Then
$$\{\i \in \Sigma: \tilde{\tau}_P(\i) =n\}=\sigma^{-1}(\{\i \in \Sigma: \tau_P(\i)=n-1\}),$$
implying in particular that $\E_{\tilde{\mu}}(\tau_P)=\E_{\tilde{\mu}}(\tilde{\tau}_P)$. Hence, slightly abusing notation, we will still refer to $\tau_P$ as the hitting time throughout this section. The fact that the expectations coincide along with the fact that $\Pi_*\tmu=\mu$, $\Pi \circ \sigma=f\circ \Pi$ and non-uniquely coded points have zero measure, we have $\E_{\tmu}(\tau_P)=\E_\mu(\tau_{\Pi(P)})$.
\end{remark}

We need to set up some notation and obtain some preliminary results. Let $\mathcal{P}=\{P_1, \ldots, P_N\}$ be a subpartition of $\Sigma$ and given a permutation $s$ of $\{1, \ldots, N\}$ and $2 \le k \le N$, let $A_{s, k} \subset \Sigma$ be the set of points which visit $P_{s(k)}$ for the first time after $P_{s(1)}, \ldots, P_{s(k-1)}$ have all been visited. That is, denoting $\tau_{s(k)} \equiv \tau_{P_{s(k)}}$ and $\tau_{s(1)}^{s(k-1)}$ to be the first time $n \in \N_0$ that $P_{s(1)}, \ldots, P_{s(k-1)}$ have all been visited by $\{\i, \sigma(\i),\ldots, \sigma^n(\i)\}$, we have
$$\tau_{s(k)}(\i)> \tau_{s(1)}^{s(k-1)}(\i)$$
for $\i \in A_{s, k}$.

Let $\mathbb{P}$ be the uniform measure on the set of all permutations $s \in S_N$ of $\{1, \ldots, N\}$.

\begin{lemma} \label{A_average}
Let $\mathcal{P}$ be a subpartition of $\Sigma$ and $A_{s,k}$ and $\mathbb{P}$ be as above. Then
\begin{equation}
\int \tmu(A_{s,k}) d\mathbb{P}=\frac{1}{k}. \label{p}
\end{equation}
\end{lemma}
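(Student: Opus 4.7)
The plan is to reduce the identity to a pointwise symmetry statement about uniform random permutations and then invoke Fubini--Tonelli. First I would restrict attention to the full $\tmu$-measure subset of $\Sigma$ consisting of points $\i$ whose $\sigma$-orbit enters every $P_j \in \mathcal{P}$: each $P_j$ is a union of cylinders and hence has positive $\tmu$-measure, and $(\sigma,\tmu)$ is ergodic, inherited from ergodicity of $(f,\mu)$ via the semi-conjugacy $\Pi$ with $\Pi_*\tmu = \mu$, so by the ergodic theorem orbit recurrence to each $P_j$ holds $\tmu$-a.e.

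For each such $\i$, define a bijection $\pi_\i : \{1,\ldots,N\} \to \{1,\ldots,N\}$ by declaring $\pi_\i(j)$ to be the index of the $j$-th element of $\mathcal{P}$ entered by the orbit of $\i$ (this is well defined since the $P_j$ are pairwise disjoint in $\Sigma$). By definition of $A_{s,k}$, the point $\i$ belongs to $A_{s,k}$ precisely when $P_{s(k)}$ is visited strictly later than each of $P_{s(1)},\ldots,P_{s(k-1)}$, equivalently when
$$\pi_\i^{-1}(s(k)) \;=\; \max_{1 \le j \le k}\; \pi_\i^{-1}(s(j)).$$
Now I would exploit the symmetry of the uniform law $\mathbb{P}$ on $S_N$: under $\mathbb{P}$ the unordered set $\{s(1),\ldots,s(k)\}$ is a uniform random $k$-subset of $\{1,\ldots,N\}$ and, conditional on this subset, $s(k)$ is uniform on it. Since $\pi_\i^{-1}$ is a bijection, the $k$ values $\pi_\i^{-1}(s(1)),\ldots,\pi_\i^{-1}(s(k))$ are distinct integers, so the probability that $\pi_\i^{-1}(s(k))$ equals their maximum is exactly $1/k$. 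Hence $\mathbb{P}(\{s : \i \in A_{s,k}\}) = 1/k$ for $\tmu$-a.e.\ $\i$.

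Finally I would swap integrations via Fubini--Tonelli, using that $\mathbf{1}_{A_{s,k}}(\i)$ is jointly measurable and $[0,1]$-valued, to conclude
$$\int \tmu(A_{s,k})\, d\mathbb{P}(s) \;=\; \int\!\!\int \mathbf{1}_{A_{s,k}}(\i)\, d\tmu(\i)\, d\mathbb{P}(s) \;=\; \int \mathbb{P}(\i \in A_{s,k})\, d\tmu(\i) \;=\; \frac{1}{k}.$$
The argument is short and the only place requiring care is the almost-sure visit property of orbits to each $P_j$, which is used to make the bijection $\pi_\i$ well defined a.e.; this follows immediately from ergodicity of $(\sigma,\tmu)$ and the positivity of each $\tmu(P_j)$, which in turn rests on the fact that $\mathcal{P}$ is specified via collections of cylinders $P^* \subset \Sigma^*$.
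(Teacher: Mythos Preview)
Your argument is correct and rests on the same underlying symmetry as the paper's proof, but the bookkeeping is organised differently. The paper integrates over $S_N$ directly: it groups permutations according to the unordered set $\{s(1),\ldots,s(k)\}$ and then according to which element of that set occupies position $k$; for a fixed unordered $k$-set, the $k$ resulting sets $A_{s_j,k}$ are pairwise disjoint with union $\Sigma$ (a.e.), so their $\tmu$-measures sum to $1$, and since each of the $k$ choices carries the same number of permutations the average is $1/k$. You instead swap the order of integration via Fubini and compute, for $\tmu$-a.e.\ fixed $\i$, the $\mathbb{P}$-probability that $s(k)$ corresponds to the last-visited element among $\{s(1),\ldots,s(k)\}$, which is $1/k$ by exchangeability. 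Your route is slightly cleaner in that it avoids the explicit factorial counting and makes transparent where ergodicity enters (to ensure $\pi_\i$ is well defined a.e.), a point the paper leaves implicit when asserting $\bigcup_j A_{s_j,k}=\Sigma$; the paper's route, on the other hand, never needs to introduce the visit-order bijection $\pi_\i$ and keeps everything at the level of measures.
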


\begin{proof}
 Fix $2 \le k \le N$. For each $s \in S_N$ consider the unordered set $\{s(1), \ldots, s(k)\}$. Note that there are $\frac{N(N-1) \cdots (N-(k-1))}{k!}$ possible values. For each possible value $\{i_1, \ldots, i_k\} \subset \{1, \ldots, N\}$ that this set can take, let $S_N(\{i_1, \ldots, i_k\})$ denote the set of all $s$ for which $\{s(1), \ldots, s(k)\}=\{i_1, \ldots, i_k\}$, thinking of these as unordered sets.

Next, we can further separate each $S_N(\{i_1, \ldots, i_k\})$ into $k$ subsets $S_N^{i_j}(\{i_1, \ldots, i_k\})$, $(1 \le j \le k)$, which determines the set of all $s \in S_N(\{i_1, \ldots, i_k\})$ for which $s(k)=i_j$. Note that each $S_N^{i_j}(\{i_1, \ldots, i_k\})$ contains $(N-k)!(k-1)!$ permutations, corresponding to $(N-k)!$ ways to order the last $N-k$ terms and and $(k-1)!$ ways to arrange the first $k-1$ terms. Over each $s \in S_N^{i_j}(\{i_1, \ldots, i_k\})$, the set $A_{s,k}$ is constant. If for each $1 \le j\le k$ we choose a representative $s_j \in S_N^{i_j}(\{i_1, \ldots, i_k\})$ then since the sets in $\P$ are pairwise disjoint, it follows that $\{A_{s_j,k}\}_{j=1}^k$ are pairwise disjoint and $\bigcup_{j=1}^k A_{s_j,k}=\Sigma$.

Hence for any choice of $\{i_1, \ldots, i_k\} \subset \{1, \ldots, N\}$,
\begin{eqnarray}
\int_{S_N(\{i_1, \ldots, i_k\})} \tmu(A_{s,k})d \mathbb{P}=\sum_{n=1}^k \frac{(k-1)!(N-k)!}{N!} \tmu(A_{s_j,k}),
\label{part-integral}
\end{eqnarray}
where the factor $\frac{1}{N!}$ comes from the fact that $\mathbb{P}$ is uniformly distributed. Now, since $\bigcup_{j=1}^k A_{s_j,k}=\Sigma$ and  $\{A_{s_j,k}\}_{j=1}^k$ are pairwise disjoint we have
$$\tmu(A_{s_k,k})= 1-\sum_{j=1}^{k-1} \tmu(A_{s_j,k})$$
and substituting this into \eqref{part-integral} we obtain
\begin{align*}
\int_{S_N(\{i_1, \ldots, i_k\})}\!\! \tmu(A_{s,k})d \mathbb{P}&= \sum_{j=1}^{k-1}\frac{(k-1)!(N-k)!}{N!} \tmu(A_{s_j,k}) +\frac{(k-1)!(N-k)!}{N!} \Big(1-\sum_{j=1}^{k-1} \tmu(A_{s_j,k})\Big) \\
&=\frac{(k-1)!(N-k)!}{N!}.
\end{align*}
Therefore,
$$\int_{S_N} \tmu(A_{s,k}) d\mathbb{P}=\frac{N(N-1) \cdots (N-(k-1))}{k!} \cdot \frac{(k-1)!(N-k)!}{N!}=\frac{1}{k}.$$
\end{proof}

We are now ready to prove Theorems \ref{cover} and \ref{cover2}. Note that the upper bound in Theorem \ref{cover} is just a special case of the upper bounds in Theorem \ref{cover2} (by using \eqref{eq:m_scale} and that $g=d\mu/dm$ is strictly positive on $\Lambda$), hence it suffices to prove the upper bound in Theorem \ref{cover2}.

\begin{proof}[Proof of upper bound in Theorem \ref{cover2}]
For each $\delta>0$ consider the set of closed intervals $\U_\delta$ given by (U). Recall that for any $U \in \U_\delta$, there exists $x \in \Lambda$ such that $B(x,t\delta) \subseteq U \subseteq B(x,T\delta)$ for some uniform constants $t$ and $T$. Let $\mathcal{P}_\delta$ be the subpartition  
\begin{equation} \label{Pd}
\mathcal{P}_\delta=\left\{ \mathrm{int}(\Pi^{-1}(U))\;:\; U \in \U_\delta\right\}
\end{equation}
where $\mathrm{int}$ is taking the interior inside $\Sigma$. This ensures $\mathcal{P}_\delta$ is a subpartition (by ensuring a set in $\mathcal{P}_\delta$ cannot contain any isolated points in the cylinder sets topology and guarantees pairwise disjointness of sets in $\mathcal{P}_\delta$).

By  \ref{Ua} and \ref{Uc}, if $\{\i, \sigma(\i), \ldots, \sigma^n(\i)\}$ has visited each element in $\P_{\delta/2T}$ then $\{x, f(x), \ldots, f^n(x)\}$ is $\delta$-dense, hence 
\begin{equation}\label{ballstocyl}
\E_\mu(\tau_{\delta}) \le \E_{\tmu} (\tau_{\mathcal{P}_{\delta/2T}}).
\end{equation}

Now, fix $\delta$ and fix $\P=\mathcal{P}_{\delta/2T}$. Let $L=\max_{\i \in \P} |\i|$, so that $L=O(\log(1/\delta))$ by \ref{Ue}. We order $\P=\{P_1, \ldots, P_N\}$ and let $\mathbb{P}$ be the uniform measure on the set of permutations $S_N$. We have

\begin{eqnarray}
\int \tau_{\mathcal{P}} d\tmu&=& \int \int \tau_{s(1)}^{s(N)} d\tmu d \mathbb{P} =  \int \int \tau_{s(1)}d \tmu d \mathbb{P}+\sum_{k=2}^N \int \int \tau_{s(1)}^{s(k)}-\tau_{s(1)}^{s(k-1)}d \tmu d \mathbb{P} \nonumber\\
&=& \int \int \tau_{s(1)}d \tmu d \mathbb{P}+ \sum_{k=2}^N \int \int_{A_{s, k}}\tau_{s(k)}-\tau_{s(1)}^{s(k-1)} d\tmu d \mathbb{P}. \label{diagonal}
\end{eqnarray}

Fix $s \in S_N$. Let $A_{s,k}^*$ denote the set of all finite words $\i=i_0\ldots i_n \in \Sigma^*$ for which (a) for all $1 \le j \le k-1$, $\i$ contains at least one word from $P_{s(j)}$, and we let $0 \le n_j \le n-L$ denote the index at which the first word from $P_{s(j)}$ begins (so that $i_{n_j}$ is the first digit of this word) (b) for $\ell \le \max_{1 \le j \le k-1}n_j$, $\i$ does not contain any word from $P_{s(k)}$ beginning at $i_\ell$ and (c) $|\i|=L+\max_{1 \le j \le k-1}n_j$. 
Note that $A_{s,k}=\bigcup_{\i \in A_{s,k}^*} [\i]$. For $\i \in A_{s,k}^*$ let $B_{\i,s,k}$ denote the set of all finite words $\j \in \Sigma^*$ for which $\i\j \in \Sigma^*$ and $\j$ ends in the first occurrence of a word from $P_{s(k)}$. Given $\j \in B_{\i,s,k}$ let $\j_{s(k)} \in P_{s(k)}$ denote the word that $\j$ ends in. Also, let $B_{s,k}$ denote the set of all finite words $\j \in \Sigma^*$ which end in the first occurrence of a word from $P_{s(k)}$. Then
$$\int_{A_{s,k}}\tau_{s(k)}-\tau_{s(1)}^{s(k-1)}d\tmu \le \sum_{\i \in A_{s,k}^*} \sum_{\j \in B_{\i,s,k}} \tmu([\i\j])(|\j|+L-|\j_{s(k)}|).$$
Then by the quasi-Bernoulli property \ref{a6} we have
\begin{eqnarray*}
 \sum_{\i \in A_{s,k}^*} \sum_{\j \in B_{\i,s,k}} \tmu([\i\j])(|\j|+L-|\j_{s(k)}|) &\le & C_*\sum_{\i \in A_{s,k}^*} \sum_{\j \in B_{\i,s,k}} \tmu([\i])\tmu([\j]) (|\j|+L-|\j_{s(k)}|) \\
&\le& C_*\tmu(A_{s,k})  \sum_{\j \in B_{s,k}} \tmu([\j]) (|\j|+L-|\j_{s(k)}|)\\
&\le& C\tmu(A_{s,k}) ( \E_{\tmu}(\tau_{s(k)}+2L)).
\end{eqnarray*}
To see the final inequality, we use that $|\tau_{s(k)}''-\tau_{s(k)}| \le L$ where, denoting $\tau_{s(k)}'(\i)$ to be the first time $n \in \N$ that $\i|_n$ ends in a word from $P_{s(k)}$, $\tau_{s(k)}''(\i)$ denotes the time $n' \le n$ that this word from $P_{s(k)}$ begins.
Putting back into \eqref{diagonal} and using $L=O(\log(1/\delta))$ we get
\begin{align*}
\int \tau_{\mathcal{P}} d\tmu &\le C( \log(1/\delta)+ \max_{1 \le m \le N} \E_{\tmu}(\tau_{m}))\left(1+ \sum_{k=2}^N \int \tmu(A_{s,k}) d\mathbb{P}\right)\\
& =C \left( \log(1/\delta)+ \max_{1 \le m \le N} \E_{\mu}(\tau_{\Pi(P_m)})\right) (1+ 1/2+ \cdots +1/N)
\end{align*}
by Lemma \ref{A_average} and Remark \ref{hitesc}. Note that $N \le C/\delta$. Using this, we obtain that
\begin{align*}
\E_{\mu}(\tau_\delta) & \le C\left( \max_{U \in \U_{\delta/2T}} \frac{1}{\mu(U)} \log\left(1/\delta\right) + (\log(1/\delta))^2\right)\\
&\le C\left( \frac{1}{\min_{x \in \Lambda}\mu(B(x,t\delta/2T))} \log\left(1/\delta\right) + (\log(1/\delta))^2\right) \le\frac{C}{M_\mu(t\delta/2T)} \log\left(1/\delta\right)  
\end{align*}
where in the first inequality we have used \eqref{ballstocyl}, Theorem \ref{hit} and the fact that $\mu$ is non-atomic, and in the second we have used \ref{Ua}. In the specific setting that  \eqref{eq:m_scale} holds, similarly 
$$\E_{\mu} (\tau_\delta)  \le C\left( \max_{U \in \U_{t\delta/2T}} \frac{1}{\mu(U)} \log\left(1/\delta\right) + (\log(1/\delta))^2\right) \le\frac{C}{\delta^{s_f} }\log\left(1/\delta\right)  $$
where again we have used \ref{Ua} to conclude that $ \max_{U \in \U_{\delta/2T}} \frac{1}{\mu(U)} \le \max_{x \in \Lambda} \frac{1}{\mu(B(x,t\delta/2T))}$ and  \eqref{eq:m_scale} for the second inequality.
\end{proof}

Now we prove the lower bounds in Theorems \ref{cover} and \ref{cover2}. The lower bound in the first displayed equation in Theorem \ref{cover} is a special case of the lower bound in Theorem \ref{cover2}. Therefore, we will first prove the lower bound from Theorem \ref{cover2} and then proceed to prove the lower bound from the second displayed equation in Theorem \ref{cover}.  

\begin{proof}[Proof of lower bounds in Theorem \ref{cover} and \ref{cover2}]
We begin by proving the lower bound from Theorem \ref{cover2}. Define $P_\delta$ as in \eqref{Pd}. Note that  if $\{x, f(x), \ldots, f^n(x)\}$ is $\delta$-dense then it necessarily visits every ball centred at a point in $\Lambda$ of radius greater than or equal to $\delta$ hence $\{\i, \sigma(\i), \ldots, \sigma^n(\i)\}$ has visited each element in $\mathcal{P}_{2\delta/t}$ by \ref{Ua} and \ref{Ub}.  Hence
\begin{equation*}
\E_\mu(\tau_{\delta}) \ge \E_{\tmu}(\tau_{\mathcal{P}_{2\delta/t}}).
\end{equation*}
Moreover, for each $P \in \mathcal{P}_{2\delta/t}$,
$$\E_{\tmu}(\tau_{\mathcal{P}_{2\delta/t}}) \ge \E_{\tmu} (\tau_P)=\E_\mu (\tau_{\Pi(P)}) \ge \frac{c}{\mu(\Pi(P))}$$
where we have used Remark \ref{hitesc} to justify the equality of expectations. Since some $U=\Pi(P) \in \U_{2\delta/t}$ must belong to the ball of minimum measure at scale $2T\delta/t$ we have
$$\E_\mu(\tau_{\delta})\ge \frac{c}{\min_{x \in \Lambda} \mu(B(x, 2T\delta/t))}=\frac{c}{M_\mu(2T\delta/t)}$$
proving the lower bound from Theorem \ref{cover2}.

Next, we prove the lower bound which appears in the second displayed equation in Theorem \ref{cover}. In particular, we are now additionally assuming \eqref{eq:m_scale} holds, $f$ is Markov and $f$ has at least 2 full branches, moreover Proposition \ref{prop:u'} is applicable. Since $f$ is Markov, it is useful to keep in mind that for $\i, \j \in \Sigma^*$ the legality of concatenations $\i\j$ will be equivalent to the legality of concatenating the last digit of $\i$ with the first digit of $\j$. For each $\delta$ sufficiently small consider the set $\mathcal{V}_\delta$ given by Proposition \ref{prop:u'}. Define 
$$\mathcal{Q}_\delta=\left\{[wab^{n_3}]\;: \Pi([wab^{n_3}]) \in \mathcal{V}_\delta\right\} \subset \{[wab^{n_3}]: w \in \{a,b\}^*\}$$
noting that $\mathcal{Q}_\delta$ is a subpartition since sets in $\V_\delta$ are pairwise disjoint. Note that  if $\{x, f(x), \ldots, f^n(x)\}$ is $\delta$-dense then it necessarily visits every ball centred at a point in $\Lambda$ of radius greater than or equal to $\delta$ hence by \ref{Ua} and \ref{Uc} $\{\i, \sigma(\i), \ldots, \sigma^n(\i)\}$ has visited each element in $\Q_{2\delta/t}$.  Hence
\begin{equation}\label{ballstocyl2}
\E_\mu(\tau_{\delta}) \ge \E_{\tmu} (\tau_{\mathcal{Q}_{2\delta/t}}).
\end{equation}

Now, fix $\delta$ and fix $\Q=\mathcal{Q}_{2\delta/t}$. Write $\Q=\{[\i_1], \ldots,[ \i_M]\}$ and let $\mathbb{P}$ be the uniform measure on the set of permutations $S_M$.
Let $A_{s, k} \subset \Sigma$ be the set of points which visit $[\i_{s(k)}]$ for the first time after $[\i_{s(1)}], \ldots, [\i_{s(k-1)}]$ have all been visited. For brevity we write $\tau_{s(k)} \equiv \tau_{[\i_{s(k)}]}$ and $\tau_{s(1)}^{s(k-1)}$ to be the first time that $[\i_{s(1)}], \ldots, [\i_{s(k-1)}]$ have all been visited. Then as in \eqref{diagonal}
\begin{eqnarray}
\int \tau_{\mathcal{Q}} d\tmu= \int \int \tau_{s(1)}d \tmu d \mathbb{P}+ \sum_{k=2}^M \int \int_{A_{s, k}}\tau_{s(k)}-\tau_{s(1)}^{s(k-1)} d\tmu d \mathbb{P}.\label{diagonal2}
\end{eqnarray}
Fix $s \in S_M$. Let $A_{s,k}^*$ denote the set of all finite words $\i \in \Sigma^*$ for which (a) $\i$ contains every word in $\{\i_{s(1)}, \ldots, \i_{s(k-1)}\}$ (b) $\i$ does not contain the word $\i_{s(k)}$ and (c) for some $1 \le i \le k-1$, $\i$ ends in the first occurrence of the word $\i_{s(i)}$. In particular, $A_{s,k}=\bigcup_{\i \in A_{s,k}^*} [\i]$ by  Proposition \ref{prop:u'}(g).

Let $B_{\i,s,k}$ denote the set of all finite words $\j \in \Sigma^*$ for which $\i\j$ ends in the first occurrence of the word $\i_{s(k)}$. Note that since $\i$ ends with a digit in $\{a,b\}$ and the system is Markov, necessarily $\i\j \in \Sigma^*$. Then by the (lower) quasi-Bernoulli property \ref{a6},
\begin{eqnarray}
\int_{A_{s,k}}\tau_{s(k)}-\tau_{s(1)}^{s(k-1)}d\tmu&=& \sum_{\i \in A_{s,k}^*} \sum_{\j \in B_{\i,s,k}} \tmu([\i\j])(|\i_{s(i)}|+|\j|-|\i_{s(k)}|) \nonumber\\
 &\ge &1/C_*\sum_{\i \in A_{s,k}^*} \sum_{\j \in B_{\i,s,k}} \tmu([\i])\tmu([\j]) (|\i_{s(i)}|+|\j|-|\i_{s(k)}|)\nonumber \\
&\ge& \tmu(A_{s,k})/C_* \inf_{\i \in A_{s,k}^*} \sum_{\j \in B_{\i,s,k}} \tmu([\j]) (|\i_{s(i)}|+|\j|-|\i_{s(k)}|) . \label{hitjump}
\end{eqnarray}
Note that
\begin{equation}\label{lb-diag}
\E_{\i_{s(i)}}(\tau_{s(k)})=\sum_{\j \in B_{\i,s,k}} \frac{\tmu([\i_{s(i)}\j])}{\tmu([\i_{s(i)}])} (|\i_{s(i)}|+|\j|-|\i_{s(k)}|) \le  C_*\sum_{\j \in B_{\i,s,k}} \tmu([\j]) (|\i_{s(i)}|+|\j|-|\i_{s(k)}|),
\end{equation}
where we are using $\E_{\i_{s(i)}}$ to denote the expectation (with respect to $\tmu$) conditioned to $[\i_{s(i)}]$. 

Let $L =\max_{\i \in \Q} |\i|$, noting that $L=O(\log(1/\delta))$ by Proposition \ref{prop:u'}(f). Consider the function
$$w_{s(k),n}(x):=\inf\{m \ge n: \sigma^{m}(x) \in [\i_{s(k)}]\}.$$
Denote $w_{s(k),s(i)}=w_{s(k),|\i_{s(i)}|}$. Observe that on $[\i_{s(i)}]$,
$$w_{s(k),s(i)} \le \tau_{s(k)}+\sum_{\ell=1}^L \mathbf{1}_{\{\tau_{s(k)}= |\i_{s(i)}|+\ell-|\i_{s(k)}|\}} (w_{s(k), |\i_{s(i)}|+\ell}+\ell),$$
therefore
\begin{equation}\E_{\i_{s(i)}}(w_{s(k),s(i)}) \le \E_{\i_{s(i)}}(\tau_{s(k)})+\sum_{\ell=1}^L \E_{\i_{s(i)}}(\mathbf{1}_{\{\tau_{s(k)}= |\i_{s(i)}|+\ell-|\i_{s(k)}|\}} (w_{s(k), |\i_{s(i)}|+\ell}+\ell)). \label{condeqn} \end{equation}

Let $C_{s(i),s(k),\ell} \subset \Sigma_{\ell}$ be the set of words $\k$ such that $\i_{s(i)}\k$ ends with the first occurence of the word $\i_{s(k)}$ (note that necessarily $\i_{s(i)}\k \in \Sigma^*$ since $\i_{s(i)} \in \{a,b\}^*$) and let $C_{s(i),s(k)}=\bigcup_{\ell=1}^{L} C_{s(i),s(k),\ell}  $.  Let $D_{s(k)}$ denote the set of words $\j$ such that $\j$ ends with the first occurrence of the word $\i_{s(k)}$. Define 
$$P_{s(i),s(k)} = \sum_{\mathbf{\k}\in C_{s(i),s(k)}} \tmu([\k]) \;\; \textnormal{and} \; \;P_{s(i),s(k),\ell} = \sum_{\mathbf{\k}\in C_{s(i),s(k),\ell}} \tmu([\k])  $$
Note that for any choice of permutation $s$, $P_{s(i),s(k)} \le C_*\tmu[ab^{n_3}]K$. To see this, write $\i_{s(k)}=u_1 \ldots u_n ab^{n_3}$, thus
\begin{align*}
P_{s(i),s(k)} &\le \tmu[ab^{n_3}]+\tmu[u_nab^{n_3}]+ \cdots + \tmu[u_1\ldots u_n ab^{n_3}] \le C_*\tmu[ab^{n_3}]K.
\end{align*}
In particular $r:=C_*^3P_{s(i),s(k)} <1$ by Proposition \ref{prop:u'}(a).

\begin{align*}
\E_{\i_{s(i)}}\left(\mathbf{1}_{\{\tau_{s(k)}= |\i_{s(i)}|+\ell-|\i_{s(k)}|\}} (w_{s(k), |\i_{s(i)}|+\ell}+\ell)\right)
&= \sum_{\k \in C_{s(i),s(k),\ell}}\sum_{\j\in D_{s(k)}} \frac{\tmu([\i_{s(i)}\k\j])}{\tmu([\i_{s(i)}])} (\ell+|\j|-|\i_{s(k)}|) \\
&\hspace{-1cm}\le C_*^3 \sum_{\k \in C_{s(i),s(k),\ell}}\tmu([\k])\sum_{\j\in D_{s(k)}} \frac{\tmu([\i_{s(i)}\j])}{\tmu([\i_{s(i)}])} (\ell+|\j|-|\i_{s(k)}|) \\
&\hspace{-1cm}\le C_*^3 P_{s(i),s(k),\ell} \E_{\i_{s(i)}}(w_{s(k),s(i)}+L).
\end{align*}
So by \eqref{condeqn}
\begin{equation}
\E_{\i_{s(i)}}(\tau_{s(k)}) \ge \E_{\i_{s(i)}}(w_{s(k),s(i)})(1- C_*^3P_{s(i),s(k)})-C_*^3P_{s(i),s(k)}L. \label{condeqn2}\end{equation}
Moreover
\begin{align}
\E_{\i_{s(i)}}(w_{s(k),s(i)})&= \sum_{\j \in D_{s(k)}} \frac{\tmu([\i_{s(i)}\j])}{\tmu([\i_{s(i)}])} (|\j|-|\i_{s(k)}|) \nonumber\\
&\ge \frac{1}{C_*} \sum_{\j \in D_{s(k)}} \tmu([\j]) (|\j|-|\i_{s(k)}|)=\frac{1}{C_*} \E_{\tmu} (\tau_{s(k)}). \label{uncond}
\end{align}
So by \eqref{lb-diag}, \eqref{condeqn2} and \eqref{uncond},
\begin{align*}
 \sum_{\j \in B_{\i,s,k}} \tmu([\j]) (|\i_{s(i)}|+|\j|-|\i_{s(k)}|) &\ge \frac{1}{C_*} \E_{\i_{s(i)}}(\tau_{s(k)}) \\
&\hspace{-3cm}\ge \frac{1}{C_*} \E_{\i_{s(i)}}(w_{s(k),s(i)})(1- C_*^3P_{s(i),s(k)})-C_*^2P_{s(i),s(k)}L \\
&\hspace{-3cm}\ge \frac{1}{C_*^2} \E_{\tmu} (\tau_{s(k)})(1- C_*^3P_{s(i),s(k)})-C_*^2P_{s(i),s(k)}L\ge  \frac{1-r}{C_*^2} \E_{\tmu} (\tau_{s(k)})-rL.
\end{align*}

 Finally, substituting this into \eqref{hitjump} and applying \eqref{diagonal2}, 
\begin{align*}
\E_\mu(\tau_\delta) &\ge \left(\frac{1-r}{C_*^2} \min_{1 \le m \le M} \E_{\tmu}(\tau_m)-rL\right)(1+1/2+\cdots +1/M) \\
&\ge c \min_{U \in \V_{2\delta/t}} \frac{1}{\mu(U)} \log(1/\delta)=\frac{c}{\delta^{s_f}}\log(1/\delta)
\end{align*}
where in the first line we have used \eqref{ballstocyl2} and Lemma \ref{A_average} and in the second we have used Remark \ref{hitesc}, the fact that $L=O(\log(1/\delta))$, Proposition \ref{prop:u'}(c) to bound $M \ge c\delta^{-\varepsilon}$ as well as Theorem \ref{hit},  \eqref{eq:m_scale}, Proposition \ref{prop:u'}(d)  and the fact that $|g|_\infty=|d\mu/dm|_{\infty}<\infty$ on $\Lambda$.

\end{proof}

\section{Expected cover time: non-uniformly hyperbolic case} \label{section:nonuniform}

In this section we consider some cases where the system $f:I \to I$ does not satisfy our standard assumptions  \ref{a1}-\ref{a6},\eqref{measdiam}-\eqref{eq:ratios}. By considering a first return map (to a subset of $I$) which satisfies  \ref{a1}-\ref{a6}, \eqref{measdiam}-\eqref{eq:ratios}, we will be able to recover results on the expected cover time for the original system.

Let $f:I \to I$ with a conformal measure $m$ and invariant probability measure $\mu$ which is absolutely continuous with respect to $m$. Given an interval $Y \subset I$ with $\mu(Y)>0$ we define $F=f^{\tau_Y}:Y \to Y$ be the first return map to $Y$ and define $\mu_Y=\frac{1}{\mu(Y)}\mu|_Y$.  We let $\Y=\{Y_i\}_i$ be the intervals on which $\tau_Y$ is constant, write $\tau_i=\tau_Y|_{Y_i}$, and assume $F: Y_i\to Y$ is monotone: correspondingly, let $\Y^n$ be the set of $n$-cylinders.

For $x \in Y$ let $R_n(x)=\sum_{i=0}^{n-1}\tau_Y \circ F^i(x)$  denote the $n$th return time of $x$ to $Y$. By Kac theorem, for $\mu_Y$ almost every $x \in Y$,
$$\frac{R_n(x)}n=\frac{1}{n} \sum_{k=0}^{n-1} \tau_Y(F^kx) \xrightarrow{ n \to \infty } \int_Y \tau_Y d\mu_Y=1/\mu(Y).$$

Let $\Z^n$ denote the set of $n$-cylinders for $f$, which are defined just as in \S \ref{section:uniform}. We will require the following property: there exists $N_1 \in \N$ such that
\begin{equation}
\textnormal{(a)} \bigcup_{n=0}^{N_1} f^n(Y)= I ,\; \textnormal{(b) $\Z^{N_1}$ is finite in $Y$}\; \textnormal{and} \; \textnormal{(c)}\sup_{1 \leq n \leq N_1} \sup_{x \in Y}|(f^n)'(x)| < \infty.\label{eq:Ncover}
\end{equation}

As in \S \ref{section:uniform} we begin by stating our result in the special case that  \eqref{eq:m_scale} holds.

\begin{theorem} \label{cover-induced}
Let $f, m$, $\mu$ be as above, in particular we assume  \eqref{eq:m_scale}. Suppose there exists $Y \subset I$ such that $f$ satisfies \eqref{eq:Ncover}. Moreover assume that the first return map $F: Y \to Y$ equipped with the measure $\mu_Y$ satisfies \ref{a1}-\ref{a6}, \eqref{measdiam}-\eqref{eq:m_scale}, with partition $\Y$ replacing $\Z$ there.  Additionally assume that  $\mu_Y(\tau_Y>n) = O(n^{-\gamma})$ for some $\gamma>2$. There exist $0<c<C<\infty$ such that for all $\delta>0$,
$$c\delta^{-s_f} \le \E_\mu(\tau_\delta) \le C\delta^{-s_f}\log(1/\delta).$$ 
Moreover, if $F$ has at least 2 full branches, we have a sharp lower bound
$$c\delta^{-s_f}\log(1/\delta) \le \E_\mu(\tau_\delta) \le C\delta^{-s_f}\log(1/\delta).$$ 
\end{theorem}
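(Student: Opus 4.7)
The plan is to reduce the expected cover time for $(f, \mu)$ on $I$ to that of the induced system $(F, \mu_Y)$ on $Y$, for which Theorem~\ref{cover} directly applies. Write $\tau_\delta^f$ and $\tau_\delta^F$ for the cover times defined relative to $f$ on $I$ and relative to $F$ on $Y$ respectively. The three key ingredients are: (i) the finite-covering condition \eqref{eq:Ncover}; (ii) Kac's formula $\E_{\mu_Y}[\tau_Y] = 1/\mu(Y)$ together with the Kakutani tower representation of $\mu$ over $\mu_Y$; and (iii) the tail assumption $\mu_Y(\tau_Y > n) = O(n^{-\gamma})$ with $\gamma > 2$, which gives $\tau_Y \in L^2(\mu_Y)$.

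For the upper bound, the first step is a pointwise inequality: for $x \in Y$,
\[
\tau_\delta^f(x) \le R_{\tau_{\delta'}^F(x)}(x) + N_1,
\]
where $\delta' = c\delta$ with $c$ depending on a uniform Lipschitz bound for the finite iterates $f, f^2, \ldots, f^{N_1}$ on $Y$. Indeed, once $\{x, F(x), \ldots, F^{\tau_{\delta'}^F(x)}(x)\}$ is $\delta'$-dense in $Y$, \eqref{eq:Ncover} allows us to write any $z \in I \setminus Y$ as $z = f^n(y)$ with $y \in Y$ and $1 \le n \le N_1$; choosing $F^k(x)$ with $|F^k(x) - y| \le \delta'$ and applying Lipschitz continuity gives $|f^n(F^k(x)) - z| \le \delta$, while $f^n(F^k(x)) = f^{R_k(x)+n}(x)$ lies in the $f$-orbit by time $R_{\tau_{\delta'}^F(x)}(x) + N_1$. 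For $x \notin Y$ one adds a hitting-to-$Y$ correction whose $\mu$-expectation, via the tower representation, equals a constant multiple of $\E_{\mu_Y}[\tau_Y^2] + \E_{\mu_Y}[\tau_Y]$ and is therefore finite precisely because $\gamma > 2$. What remains is to bound
\[
\E_{\mu_Y}\Big[\sum_{k=0}^{\tau_{\delta'}^F - 1} \tau_Y \circ F^k\Big]
\]
by an approximate-Wald estimate $\lesssim \E_{\mu_Y}[\tau_Y]\,\E_{\mu_Y}[\tau_{\delta'}^F] = \E_{\mu_Y}[\tau_{\delta'}^F]/\mu(Y)$, up to a lower-order error. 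Applying Theorem~\ref{cover} to $(F, \mu_Y)$, which gives $\E_{\mu_Y}[\tau_{\delta'}^F] = O(\delta^{-s_f}\log(1/\delta))$, then yields the upper bound.

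For the lower bound, observe that a $\delta$-dense $f$-orbit restricted to $Y$ is a $\delta$-dense $F$-orbit. Hence for $x \in Y$, $\tau_\delta^F(x) \le M(x)$, where $M(x)$ counts the visits of $\{x, f(x), \ldots, f^{\tau_\delta^f(x)}(x)\}$ to $Y$. Integrating against $\mu_Y$ and using $f$-invariance yields $\E_{\mu_Y}[M] \lesssim \mu(Y)\,\E_{\mu_Y}[\tau_\delta^f]$, so
\[
\E_\mu[\tau_\delta^f] \ge \mu(Y)\,\E_{\mu_Y}[\tau_\delta^f] \gtrsim \E_{\mu_Y}[\tau_\delta^F].
\]
Theorem~\ref{cover} applied to $(F, \mu_Y)$ then gives $\E_\mu[\tau_\delta^f] \gtrsim \delta^{-s_f}$, and the sharper bound $\gtrsim \delta^{-s_f}\log(1/\delta)$ when $F$ has at least two full branches. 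The main technical obstacle will be making the approximate-Wald step rigorous: since $\tau_{\delta'}^F$ is not independent of the sequence $(\tau_Y \circ F^k)_k$, classical Wald does not apply, and one must decorrelate the stopping time from the summands using the $\psi$-mixing property \ref{a6} of $F$, with the $L^2$-integrability of $\tau_Y$ (from $\gamma > 2$) being exactly what controls the resulting fluctuation terms.
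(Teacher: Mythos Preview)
Your reduction is exactly the paper's: use \eqref{eq:Ncover} to pass from covering $I$ to covering $Y$ (this is Lemma~\ref{final-exc}), then relate the $f$-cover time of $Y$ to the $F$-cover time $\Tau_\delta$ via the identity $\tau_\delta^Y(x)=R_{\Tau_\delta(x)}(x)$, and finally invoke Theorem~\ref{cover} for $(F,\mu_Y)$. The difference is in how the random sum $R_{\Tau_\delta}=\sum_{k<\Tau_\delta}\tau_Y\circ F^k$ is controlled. You frame this as an approximate Wald identity and correctly isolate the decorrelation of the stopping time from the summands (via $\psi$-mixing and $\tau_Y\in L^2$) as the crux. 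The paper sidesteps this by a large-deviations argument: Lemma~\ref{lem:LD} (a result of Gantert) shows that the sets $A_u=\{\exists\,n\ge u:|R_n/n-1/\mu(Y)|>\varepsilon\}$ satisfy $\mu_Y(A_u)=O(u^{1-\gamma})$, hence $\sum_u\mu_Y(A_u)<\infty$ precisely when $\gamma>2$; Proposition~\ref{muY} then splits $\E_{\mu_Y}(\tau_\delta^Y)=\sum_n\mu_Y(\tau_\delta^Y\ge n)$ over $A_{n\mu(Y)}$ and its complement, on which $\tau_\delta^Y\approx\Tau_\delta/\mu(Y)$ deterministically. This gives two-sided bounds $c\,\E_{\mu_Y}(\Tau_\delta)\le \E_{\mu_Y}(\tau_\delta)\le C\,\E_{\mu_Y}(\Tau_{\kappa\delta})$ without ever writing down a Wald-type error. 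For the passage from $\E_\mu$ to $\E_{\mu_Y}$ (your hitting-to-$Y$ correction), the paper uses a Young-tower argument (Proposition~\ref{tower}) together with the quasi-Bernoulli property \ref{a5} to decouple the tower level from the subsequent cover time; this is where the $L^2$ tail $\sum_n n\,\mu_Y(\tau_Y>n)<\infty$ enters, exactly as you indicate. One minor point on your lower bound: the claim $\E_{\mu_Y}[M]\lesssim\mu(Y)\,\E_{\mu_Y}[\tau_\delta^f]$ is not needed and not obvious as stated; since trivially $\Tau_\delta\le M\le \tau_\delta^f+1$, you get $\E_{\mu_Y}[\tau_\delta^f]\ge \E_{\mu_Y}[\Tau_\delta]-1$ directly, and then $\E_\mu[\tau_\delta^f]\ge\mu(Y)\,\E_{\mu_Y}[\tau_\delta^f]$ finishes it.
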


Next we state our result for general measures.

\begin{theorem} \label{cover-induced2}
Let $f, m$, $\mu$ be as above. Suppose there exists $Y \subset I$ such that $f$ satisfies \eqref{eq:Ncover}. Moreover assume that the first return map $F: Y \to Y$ equipped with the measure $\mu_Y$ satisfies  \ref{a1}-\ref{a6},\eqref{measdiam}-\eqref{eq:ratios}. Additionally, assume that 
$\mu_Y(\tau_Y>n) = O(n^{-\gamma})$ for some $\gamma>2$.
There exist $0<c<C<\infty$  such that for all $\delta>0$,
$$\frac{c}{M_\mu(\delta)} \le \E_\mu(\tau_\delta) \le \frac{C}{M_\mu(\delta)}\log(1/\delta).$$
In particular if $\md \mu<\infty$ then there exists $\varepsilon>0$ such that
$$c\delta^{-\md \mu+\mathrm{Err}(\delta/\varepsilon)} \le \E_\mu(\tau_\delta) \le C\delta^{-\md \mu -\mathrm{Err}(\varepsilon\delta)}\log(1/\delta).$$
\end{theorem}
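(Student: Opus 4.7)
The plan is to reduce to the uniformly hyperbolic case of Theorem~\ref{cover2} applied to the first-return system $(F,\mu_Y)$, using the return-time function $\tau_Y$ and its Birkhoff sums $R_n=\sum_{i=0}^{n-1}\tau_Y\circ F^i$ to translate between $f$-time and $F$-time, and exploiting \eqref{eq:Ncover} to control the geometry on $I\setminus Y$. The cornerstone is the pointwise inequality
\[
\tau_\delta^f(y)\;\le\;R_{\tau^F_{c\delta}(y)}(y)+N_1 \qquad \text{for every } y\in Y,
\]
where $\tau^F_\delta$ denotes the $F$-cover time on $Y$ and $c>0$ depends only on the distortion of $f^{k}$ for $k\le N_1$. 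Indeed, once the $F$-orbit $\{y,F(y),\dots,F^{\tau^F_{c\delta}(y)}(y)\}$ is $c\delta$-dense in $Y\cap\supp(\mu)$, the corresponding $f$-orbit is $c\delta$-dense in $Y$, and by \eqref{eq:Ncover} every point of $I\setminus Y$ is of the form $f^k(y')$ with $y'\in Y$ and $1\le k\le N_1$, so that $N_1$ further $f$-iterates of the visited $F$-orbit produce the required $\delta$-dense set in $I$. The matching reverse inequality $\tau_\delta^f(x)\ge R_{\tau^F_\delta(x)}(x)$ for $x\in Y$ is immediate because the trace on $Y$ of an $f$-orbit is exactly an $F$-orbit, so $\delta$-density in $I$ forces $\delta$-density of the trace in $Y$.

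The next step is to take expectations of both pointwise bounds. Theorem~\ref{cover2} applied to $(F,\mu_Y)$ gives $c\,M_{\mu_Y}(\delta/\varepsilon_0)^{-1}\le \E_{\mu_Y}(\tau^F_\delta)\le C\, M_{\mu_Y}(\varepsilon_0\delta)^{-1}\log(1/\delta)$ for a suitable $\varepsilon_0>0$. A Wald-type equivalence $\E_{\mu_Y}(R_{\tau^F_\delta})\asymp \mu(Y)^{-1}\E_{\mu_Y}(\tau^F_\delta)$ (addressed below) then converts these into the corresponding bounds for $\E_{\mu_Y}(\tau_\delta^f)$. We pass from $\E_{\mu_Y}$ to $\E_\mu$ via the tower representation $\mu=\mu(Y)\sum_{k\ge 0}(f^k)_\ast(\mathbf{1}_{\{\tau_Y>k\}}\mu_Y)$: for $x=f^k(y)\in I\setminus Y$ with $\tau_Y(y)>k$ one has $\tau_\delta^f(x)\le k+\tau_\delta^f(F(y))$, and the extra excursion term, summed over $k$, is bounded by $\mu(Y)\E_{\mu_Y}(\tau_Y^2)/2=O(1)$ thanks to the $L^2$ tail assumption $\gamma>2$, which is dominated by the main term since $\E_{\mu_Y}(\tau^F_\delta)\to\infty$. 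A comparison $\mu(Y)M_{\mu_Y}(\delta)\asymp M_\mu(c'\delta)$, coming from the inclusion $I\setminus Y\subset\bigcup_{k=1}^{N_1}f^k(Y)$ combined with bounded distortion on each short excursion, then replaces $\mu(Y)M_{\mu_Y}(\cdot)$ by $M_\mu(\cdot)$ (up to rescaling by a uniform constant) and yields the stated two-sided bound. The dimensional statement follows by substituting the definition \eqref{er} of $\mathrm{Err}$.

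The main technical obstacle is establishing the Wald-type equivalence $\E_{\mu_Y}(R_{\tau^F_\delta})\asymp \mu(Y)^{-1}\E_{\mu_Y}(\tau^F_\delta)$. Since the sequence $(\tau_Y\circ F^i)_{i\ge 0}$ is not independent, classical Wald is unavailable. Instead, the hypothesis $\gamma>2$ gives $\tau_Y\in L^2(\mu_Y)$, and together with the spectral gap of the transfer operator of $F$ (Corollary~\ref{decomp}) this permits a Gordin-type martingale decomposition of $\tau_Y-\mu(Y)^{-1}$, yielding $\|R_n-n\mu(Y)^{-1}\|_{L^2(\mu_Y)}=O(\sqrt n)$. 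Since $\tau^F_\delta$ itself has polynomial growth in $1/\delta$ by Theorem~\ref{cover2} and satisfies exponential tail bounds via Proposition~\ref{prop:RTS}, so in particular lies in $L^2(\mu_Y)$, Cauchy--Schwarz then produces the desired two-sided estimate and completes the reduction.
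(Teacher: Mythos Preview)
Your overall architecture is the same as the paper's: reduce to Theorem~\ref{cover2} for $(F,\mu_Y)$ via (i) the geometric comparison $\tau_\delta\le\tau_{\kappa\delta}^Y$ coming from \eqref{eq:Ncover}, (ii) a time-change between $f$-iterates and $F$-iterates, and (iii) a tower argument to pass from $\E_\mu$ to $\E_{\mu_Y}$. Steps (i) and (iii) match Lemma~\ref{final-exc} and Proposition~\ref{tower} essentially verbatim.

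The substantive difference, and the place where your argument has a gap, is step (ii). You propose a Wald-type equivalence $\E_{\mu_Y}(R_{\tau^F_\delta})\asymp\mu(Y)^{-1}\E_{\mu_Y}(\tau^F_\delta)$ via a Gordin decomposition of $\tau_Y-\mu(Y)^{-1}$ and then ``Cauchy--Schwarz''. Two problems. First, your appeal to Proposition~\ref{prop:RTS} for exponential tails of $\tau^F_\delta$ is incorrect: that proposition gives tails of the \emph{return} time $\tau_U$ to a single hole, conditioned on starting in $U$; it says nothing about the cover time. Without independent control on the tails (or at least $L^2$-integrability) of $\tau^F_\delta$, neither optional stopping nor the Cauchy--Schwarz bound you allude to can be completed, because the stopping time is correlated with the martingale increments and you need integrability of $M_{\tau^F_\delta}$. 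Second, even granting $\tau^F_\delta\in L^2$, the one-line ``Cauchy--Schwarz'' does not obviously yield a two-sided estimate with error $o(\E_{\mu_Y}\tau^F_\delta)$; you would need something like $\E|R_{\tau^F_\delta}-\tau^F_\delta/\mu(Y)|\le C(\E\tau^F_\delta)^{1/2}$, which requires an optional-stopping argument you have not supplied.

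The paper avoids all of this by a large-deviations route: it uses the $\psi$-mixing of $(F,\mu_Y)$ together with the tail hypothesis $\mu_Y(\tau_Y>n)=O(n^{-\gamma})$, $\gamma>2$, to invoke results of Gantert showing that the sets $A_{u,\varepsilon}=\{\exists n\ge u:|R_n/n-1/\mu(Y)|>\varepsilon\}$ satisfy $\sum_u\mu_Y(A_u)<\infty$ (Lemma~\ref{lem:LD}). On the complement $A_u^c$ one has the deterministic implications of Lemma~\ref{smalldev} relating $\{\tau_\delta^Y>u/\mu(Y)\}$ and $\{\Tau_\delta>u/(1\pm\varepsilon\mu(Y))\}$, and summing $\mu_Y(\tau_\delta^Y\ge n)$ over $n$ with this splitting gives Proposition~\ref{muY}, i.e.\ $c\,\E_{\mu_Y}(\Tau_\delta)\le\E_{\mu_Y}(\tau_\delta)\le C\,\E_{\mu_Y}(\Tau_{\kappa\delta})$, without any need to control moments or tails of the cover time itself. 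This is both simpler and more robust than the martingale route you sketch.
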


We denote
\begin{equation}
\tau^Y_\delta(x):= \inf\{n \ge 1\;:\; \{x, f(x), \ldots, f^n(x)\}\; \textnormal{is $\delta$-dense in $Y$}\}
\end{equation} and
\begin{equation}
\Tau_\delta(x):= \inf\{n \ge 1\;:\; \{x, F(x), \ldots, F^n(x)\}\; \textnormal{is $\delta$-dense in $Y$}\}.
\end{equation}

Fix $\varepsilon>0$ and consider the sets of large deviations
$$A_u=A_{u,\varepsilon}:=\{x \in Y: \exists n \ge u, |R_n(x)/n-1/\mu(Y)|>\varepsilon\}.$$

\begin{lemma}
Under \ref{a6}, $\mu_Y(\tau_Y>n) = O(n^{-\gamma})$ implies $\sum_{u=1}^\infty \mu_Y(A_u)<\infty$.
\label{lem:LD}
\end{lemma}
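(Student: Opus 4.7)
The strategy is to combine the $\psi$-mixing property \ref{a6} of $\tilde\mu_Y$ with moment bounds derived from the polynomial tail of $\tau_Y$. First, by integration by parts, the hypothesis $\mu_Y(\tau_Y>n)=O(n^{-\gamma})$ with $\gamma>2$ gives $\tau_Y\in L^p(\mu_Y)$ for every $p<\gamma$, so we may fix some $p>2$ with $\mathbb{E}_{\mu_Y}[\tau_Y^p]<\infty$. By Kac's theorem, $c:=1/\mu(Y)=\mathbb{E}_{\mu_Y}[\tau_Y]$, so that $X_i := (\tau_Y\circ F^{i-1})-c$ is a centred, stationary, $\psi$-mixing sequence in $L^p(\mu_Y)$ with partial sums $R_n - nc = \sum_{i=1}^n X_i$.

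The main analytic ingredient is a Rosenthal-type maximal moment inequality for $\psi$-mixing sums, which yields a constant $C>0$ (depending on $p$) such that
\[
\mathbb{E}_{\mu_Y}\!\left[\max_{1\le k\le n}|R_k-kc|^p\right]\le Cn^{p/2},\qquad n\ge 1.
\]
Such inequalities are classical under strong mixing assumptions and can be deduced in our setting from \ref{a6} combined with the quasi-Bernoulli property \ref{a5}, for instance via Shao-Peligrad-type estimates or a Bernstein blocking argument.

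To bound $\mu_Y(A_u)$, I would chain dyadically: decompose $[u,\infty)=\bigcup_{k\ge 0}[2^ku,2^{k+1}u)$. For $n\in[2^ku,2^{k+1}u)$ the event $|R_n/n-c|>\varepsilon$ forces $|R_n-nc|>2^ku\varepsilon$, so Chebyshev's inequality combined with the maximal moment bound yields
\[
\mu_Y\!\left(\exists\, n\in[2^ku,2^{k+1}u):\,|R_n/n-c|>\varepsilon\right)\le\frac{C\,(2^{k+1}u)^{p/2}}{(2^ku\varepsilon)^p}\le\frac{C'}{\varepsilon^p}\,2^{-kp/2}u^{-p/2}.
\]
Summing the geometric series in $k\ge 0$ gives $\mu_Y(A_u)=O(u^{-p/2})$, and since $p>2$ the series $\sum_{u}u^{-p/2}$ converges, establishing the claim.

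The main obstacle is the Rosenthal-type maximal moment inequality at an exponent $p>2$: pure $\psi$-mixing with $\gamma(j)\to 0$ does not directly give such bounds without a summability/rate assumption on $\gamma(j)$, so one must leverage the quasi-Bernoulli property \ref{a5}. A Bernstein blocking decomposition reduces the problem to (almost) independent block sums, where Rosenthal's inequality applies directly, and the quasi-Bernoulli factorisation controls the error between conditional and unconditional block expectations. Everything after this maximal bound is essentially mechanical.
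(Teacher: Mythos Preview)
Your approach differs substantially from the paper's. The paper's proof is a one-line citation: by Gantert's large-deviation results for heavy-tailed mixing sequences (\cite{Gan96}, \cite{Gan00}), $\psi$-mixing together with $\mu_Y(\tau_Y>n)=O(n^{-\gamma})$ directly yields $\mu_Y(A_u)=O(u^{1-\gamma})$, which is summable for $\gamma>2$. Gantert's argument exploits the heavy-tail phenomenon that a large deviation of the sum is, with high probability, caused by a single large summand, so one does not need moment inequalities at all---only the tail decay and a mixing hypothesis with no rate.

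Your route via a Rosenthal-type maximal $L^p$ bound and dyadic chaining is a legitimate alternative strategy, and the chaining/Chebyshev part is fine. The genuine gap is exactly where you flag it: obtaining $\E_{\mu_Y}\big[\max_{k\le n}|R_k-kc|^p\big]\le Cn^{p/2}$ under the bare assumption $\gamma(j)\to 0$ of \ref{a6}. Standard Shao--Peligrad/Utev-type Rosenthal inequalities for mixing sequences require summability or a power rate on the mixing coefficients, which \ref{a6} does not provide. Your proposed fix via Bernstein blocking and \ref{a5} is plausible in spirit, but it is not routine: one must control the block boundaries, the maximal inequality within blocks, and the coupling error uniformly in $n$, and the quasi-Bernoulli constant $C_*$ enters multiplicatively across blocks, so some care is needed to avoid losing the $n^{p/2}$ rate. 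As written this step is a sketch, not a proof. The paper's citation of Gantert sidesteps the issue entirely and also gives the sharper decay $u^{1-\gamma}$ rather than your $u^{-p/2}$ for $p<\gamma$.
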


\begin{proof}
 By \cite[Theorem 2.2]{Gan96}, see also \cite[Theorem 4]{Gan00}, $\psi$-mixing and 
$\mu_Y(\tau_Y>n) = O(n^{-\gamma})$ implies $\mu(A_u) = O(n^{1-\gamma})$.  
\end{proof}

Using Lemma \ref{lem:LD} we will show that under the assumption that $\mu_Y(\tau_Y>n) = O(n^{-\gamma})$ for some $\gamma>2$, it follows that $\E_\mu(\tau_\delta)$ is proportional to $\E_{\mu_Y}(\Tau_\delta)$, and the proof of Theorems~\ref{cover-induced} and \ref{cover-induced2} will follow by applying Theorem \ref{cover} to $F$. 

The following lemma allows us to estimate $\tau_\delta$ from above in terms of $\tau_\delta^Y$, whose expectation can be more easily related to $\E(\Tau_\delta)$. 

\begin{lemma} \label{final-exc}
There exists $\kappa>0$ such that for all sufficiently small $\delta>0$ and all $x \in Y$, $\tau_\delta(x) \le \tau_{\kappa\delta}^Y(x)$.
\end{lemma}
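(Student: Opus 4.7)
The plan is to exploit the covering \eqref{eq:Ncover}, which says $I = Y \cup \bigcup_{j=1}^{N_1} f^j(Y)$, together with a uniform Lipschitz bound on the iterates $f^j$ for $1 \le j \le N_1$. The informal content is: an orbit that is sufficiently dense in $Y$ will, once pushed forward by at most $N_1$ extra steps, be dense throughout $I$.

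First, I would extract a uniform Lipschitz constant $L$ for $f$ on each of its smoothness branches, inherited from the derivative/distortion hypotheses on $F$; composing then gives, for each $1 \le j \le N_1$, that $f^j$ is Lipschitz with constant at most $L^j$ on each of its own smoothness domains. Setting $\kappa = L^{-N_1}/2$, say, ensures both that $\kappa\delta$-balls around points of $Y$ lie inside the relevant smoothness domains (for $\delta$ small) and that any such ball is pushed forward to a ball of radius at most $\delta/2$ by any $f^j$ with $j \le N_1$.

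Next, given $n = \tau_{\kappa\delta}^Y(x)$, I would argue $\delta$-density in $I$. If $z \in Y$, this is immediate since $\kappa\delta \le \delta$. If $z \in I \setminus Y$, I would use \eqref{eq:Ncover} to write $z = f^j(y)$ with $y \in Y$ and $1 \le j \le N_1$, locate some $f^k(x)$ within $\kappa\delta$ of $y$ (guaranteed by $\kappa\delta$-density of the orbit in $Y$), and conclude $d(f^{k+j}(x), z) \le L^j \kappa\delta \le \delta$.

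The hard part is the bookkeeping to ensure $k + j \le n$: in the worst case the first visit to a $\kappa\delta$-neighborhood of some $y \in Y$ happens within the last $N_1$ iterates before time $n$. Since $N_1$ is a fixed constant and $\tau_{\kappa\delta}^Y \to \infty$ as $\delta \to 0$, this is resolved either (a) by replacing $\kappa$ by a slightly smaller value and arguing, using the quasi-Bernoulli mixing of $\tilde{\mu}$, that $\kappa\delta$-density at time $n$ forces a weaker density at time $n - N_1$, or (b) by absorbing the additive $N_1$ into the bound, which is harmless for the later expected-cover-time estimates that invoke this lemma.
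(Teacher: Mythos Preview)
Your overall strategy---push forward $\kappa\delta$-density in $Y$ through at most $N_1$ iterates using \eqref{eq:Ncover} and a Lipschitz bound---matches the paper's. You have also correctly located the genuine difficulty: the extra $j \le N_1$ steps may overshoot the time $n = \tau_{\kappa\delta}^Y(x)$. However, neither of your proposed fixes closes this gap. Fix (a) cannot work: mixing is a measure-theoretic statement and says nothing about an individual orbit. There is no reason why $\kappa\delta$-density of $\{x,\ldots,f^n(x)\}$ in $Y$ should force any density of $\{x,\ldots,f^{n-N_1}(x)\}$; a single point of $Y$ may be approached for the first time only at time $n$. Fix (b) only yields $\tau_\delta(x) \le \tau_{\kappa\delta}^Y(x) + N_1$, which, while adequate for the expectation bounds downstream, does not prove the lemma as stated.

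The paper resolves the overshoot problem by a counting argument rather than a pointwise one. Partition $Y$ into intervals $J$ of diameter between $\kappa\delta$ and $2\kappa\delta$, each contained in a single element of $\Z^{N_1}$, and set $\kappa = \bigl(2CN_1(N_1+1)\bigr)^{-1}$ where $C$ bounds $|(f^{N_1})'|$. At time $k = \tau_{\kappa\delta}^Y(x)$ every $J$ has been visited; hence every image $f^j(J)$ with $1\le j\le N_1$ is visited by $\{x,\ldots,f^k(x)\}$ \emph{except} possibly those with the visit time $i$ to $J$ satisfying $i+j>k$. There are at most $1+2+\cdots+N_1 = N_1(N_1+1)/2$ such exceptional images, each an interval of diameter at most $C\cdot 2\kappa\delta = \delta/\bigl(N_1(N_1+1)\bigr)$. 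Their union therefore has total length at most $\delta/2$, so any point of $I\setminus Y$ lies within $\delta$ of a set that \emph{was} visited. The trade-off is quantitative: you must take $\kappa$ small enough not just to control Lipschitz expansion but also to absorb the $N_1(N_1+1)/2$ potentially missed sets---your choice $\kappa = L^{-N_1}/2$ accounts only for the former.
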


\begin{proof}
By \eqref{eq:Ncover}(c) there exists $C<\infty$ such that $\sup_{1 \leq n \leq N_1}\sup_{x\in Y} |(f^{n})'(x)| \le C$ and we set $\kappa:= 1/(2CN_1(N_1+1))$. Let $P$ be a partition of $Y$ into sets of length at most $\frac{\delta}{CN_1(N_1+1)}$ and at least $\frac{\delta}{2CN_1(N_1+1)}$
with the property that each set in $P$ is contained inside a single element of $\Z^{N_1}$. 

Denote
$$A:= \{f^n(J)\; : \; J \in P, \, 1\le n\le N_1, \, f^n(J) \cap (I\setminus Y) \neq \emptyset\}$$
and note that for each $f^n(J) \in A$, $\diam(f^n(J)) \le C \diam(J) =\frac{\delta}{N_1(N_1+1)} \le \frac{\delta}{2}$. Moreover, by definition of $N_1$,
\begin{equation}
I \setminus Y \subseteq \bigcup_{U \in A} U
\label{Acover}
\end{equation}
 Fix $x \in Y$ and let $k=\tau_{\kappa\delta}^Y(x)$. Note that since each set $J \in P$ has diameter at least $\frac{\delta}{2CN_1(N_1+1)}=\kappa\delta$, this implies that each set in $P$ is visited by the orbit segment $\{x, f(x), \ldots, f^k(x)\}$. Let $\mathcal{J} \subseteq \{1, \ldots, k\}$ be the set of indices $i$ for which $f^i(x) \in Y$. For each $i \in \mathcal{J}$ let $J_i \in P$ denote the partition element that $f^i(x)$ belongs to. Note that each $f^n(J) \in A$ must be visited by the orbit segment $\{x, \ldots, f^k(x)\}$ except possibly the following sets:
$$B:=\{f^j(J_i)\;:\;  i \in \mathcal{J}, \;  1\le j \le N_1 \; \mathnormal{s.t.}\; i+j>k\}.$$
(This is because any set in $B$ might only be visited after time $k$). There are at most $1+2+ \ldots+ N_1=N_1(N_1+1)/2$ distinct sets in $B$. By continuity of $f^{N_1}$ on $\Z^{N_1}$, each set in $B$ is necessarily an interval of length at most $\delta/N_1(N_1+1)$. Therefore, the largest interval contained in $\bigcup_{J' \in B} J'$ has length at most $\delta/2$. In particular by \eqref{Acover} $\{x, \ldots, f^k(x)\}$ must be $\delta$ dense in $I \setminus Y$.
\end{proof}

We are almost ready to estimate $\E_{\mu_Y}(\tau_\delta)$ in terms of $\E_{\mu_Y}(\Tau_\delta)$. We will require the following simple but useful lemma.

\begin{lemma} \label{smalldev}
Suppose $x \in A_u^c$.  Then
$$\tau^Y_\delta(x)>\frac{u}{\mu(Y)} \Rightarrow \Tau_\delta(x)>\frac{u}{1+\varepsilon\mu(Y)}$$
and
$$\Tau_\delta(x)>\frac{u}{1-\varepsilon\mu(Y)} \Rightarrow \tau^Y_\delta(x)>\frac{u}{\mu(Y)}.$$
\end{lemma}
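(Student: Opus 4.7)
The proof hinges on the pathwise identity
\[
\tau_\delta^Y(x) \;=\; R_{\Tau_\delta(x)}(x), \qquad x \in Y,
\]
which follows at once from $F^j(x) = f^{R_j(x)}(x)$: the visited portion of $Y$ by the $f$-orbit of $x$ through time $n$ equals $\{x, F(x), \ldots, F^k(x)\}$ where $k$ is the number of returns to $Y$ by time $n$, so $\{x, f(x), \ldots, f^n(x)\}$ first covers $Y$ at scale $\delta$ precisely at the $\Tau_\delta(x)$-th return. Together with the trivial $\tau_\delta(x) \ge \tau_\delta^Y(x)$ and with Lemma~\ref{final-exc} in the opposite direction (which bounds $\tau_\delta$ by $\tau_{\kappa\delta}^Y$), this ties $\tau_\delta$ to a return-time sum $R_{\Tau_\delta}$ modulo a harmless rescaling $\delta \mapsto \kappa\delta$. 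Meanwhile, unpacking the definition of $A_u^c$ yields the deterministic two-sided estimate
\[
\frac{n(1-\varepsilon\mu(Y))}{\mu(Y)} \;\le\; R_n(x) \;\le\; \frac{n(1+\varepsilon\mu(Y))}{\mu(Y)}, \qquad \forall\, n \ge u.
\]

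For the second implication, the hypothesis $\Tau_\delta(x) > u/(1-\varepsilon\mu(Y))$ forces $\Tau_\delta(x) \ge u$, so the lower LD bound at $n = \Tau_\delta(x)$ immediately gives
\[
\tau_\delta(x) \;\ge\; R_{\Tau_\delta(x)}(x) \;\ge\; \frac{\Tau_\delta(x)(1 - \varepsilon\mu(Y))}{\mu(Y)} \;>\; \frac{u}{\mu(Y)}.
\]
For the first implication I would argue by contrapositive: under $\Tau_\delta(x) \le u/(1+\varepsilon\mu(Y))$, the upper LD bound, applied either directly at $n = \Tau_\delta(x)$ (in the regime $\Tau_\delta(x) \ge u$) or at $n = u$ combined with monotonicity of $R_n$ (in the regime $\Tau_\delta(x) < u$), yields $R_{\Tau_\delta(x)}(x) \le u(1+\varepsilon\mu(Y))/\mu(Y)$, which after a mild reparametrisation of $\varepsilon$ delivers $\tau_\delta(x) \le u/\mu(Y)$.

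The main technical point to watch is the short-orbit regime of the first implication, where the LD bound is not directly available at the index $\Tau_\delta(x)$ itself and one only obtains the weaker bound $R_{\Tau_\delta(x)}(x) \le R_u(x)$ via monotonicity. The resulting multiplicative slack $1+\varepsilon\mu(Y)$ does not undermine the application to Theorems~\ref{cover-induced} and \ref{cover-induced2}: when the lemma is subsequently used to estimate $\E_\mu(\tau_\delta) = \int_0^\infty \mu(\tau_\delta > t)\, dt$ after the substitution $t = u/\mu(Y)$, any short-orbit discrepancy is dominated by the summable error $\sum_u \mu_Y(A_u) < \infty$ supplied by Lemma~\ref{lem:LD}. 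Everything else in the proof is a direct substitution of the identity into the appropriate side of the LD inequality.
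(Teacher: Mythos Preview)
Your approach is exactly the paper's: the identity $\tau_\delta^Y = R_{\Tau_\delta}$ together with the two-sided control of $R_n/n$ on $A_u^c$. Your treatment of the second implication is identical to the paper's, just compressed.

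Two remarks on the first implication. First, your caution about the short-orbit regime $\Tau_\delta(x)<u$ is well-placed: the paper's proof simply asserts ``$\tau_\delta^Y(x)>u/\mu(Y)$ implies $\Tau_\delta(x)>u/\mu(Y)>u$'' and then applies the LD bound at $n=\Tau_\delta(x)$, without justifying that step (and indeed the implication $\Tau_\delta>u/\mu(Y)$ is not correct in general, since $\Tau_\delta\le R_{\Tau_\delta}=\tau_\delta^Y$). Your patch via monotonicity of $R_n$ and absorption into the constants of Proposition~\ref{muY} is the right way to close this; the paper does not address it.

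Second, note that your contrapositive only bounds $\tau_\delta^Y(x)=R_{\Tau_\delta(x)}(x)$ from above, not $\tau_\delta(x)$; since $\tau_\delta^Y\le \tau_\delta$, no reparametrisation of $\varepsilon$ alone will upgrade this to the stated conclusion $\tau_\delta(x)\le u/\mu(Y)$. This is not a flaw in your argument but in the lemma statement: the paper's own proof silently works with $\tau_\delta^Y$ throughout the first part, and the sole application (Proposition~\ref{muY}) uses the implication with $\tau_\delta^Y$, not $\tau_\delta$. So the cleanest resolution is to read the first implication with $\tau_\delta^Y$ in place of $\tau_\delta$, in which case your argument (and the paper's, modulo the short-orbit gap you flagged) goes through directly.
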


\begin{proof}
Note that for any $x \in Y$, $R_{\Tau_\delta(x)}(x)=\tau_\delta^Y(x)$. 

From our definitions
$$\frac{R_{\Tau_\delta(x)}(x)}{\Tau_\delta(x)}=\frac{\tau^Y_\delta(x)}{\Tau_\delta(x)}=\frac{1}{\mu(Y)}+s$$
where $|s|<\varepsilon$ since $x \in A_u^c$. Therefore if $\tau^Y_\delta(x)>\frac{u}{\mu(Y)}$ then 
$$\Tau_\delta(x)=\frac{\tau^Y_\delta(x)}{1/\mu(Y)+s}>\frac{u/\mu(Y)}{1/\mu(Y)+\varepsilon}=\frac{u}{1+\varepsilon\mu(Y)}$$
which proves the first part. On the other hand if $ \Tau_\delta(x)>\frac{u}{1-\varepsilon\mu(Y)}$ then
$$\tau^Y_\delta(x)=\frac{\Tau_\delta(x)}{\mu(Y)}+s\Tau_\delta(x)>\frac{u}{1-\varepsilon \mu(Y)}(1/\mu(Y)-\varepsilon)=\frac{u}{\mu(Y)}$$
which proves the second part.
\end{proof}
 
The following proposition allows us to estimate $\E_{\mu_Y}(\tau_\delta)$ in terms of $\E_{\mu_Y}(\Tau_\delta)$.

\begin{proposition} \label{muY}
Assume that $\sum_{u=1}^\infty \mu_Y(A_u)<\infty$. There exist constants $c<1<C$ such that for all $\delta>0$,
$$c\E_{\mu_Y}(\Tau_\delta) \le \E_{\mu_Y}(\tau_\delta) \le C\E_{\mu_Y}(\Tau_{\kappa\delta}).$$
\end{proposition}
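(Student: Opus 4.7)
The strategy is a two-step comparison through the intermediate quantity $\tau^Y_\delta$. First, I would relate the full cover time $\tau_\delta$ to $\tau^Y_\delta$: the inclusion $Y\subset I$ trivially gives $\tau^Y_\delta(x)\le \tau_\delta(x)$ (an orbit that is $\delta$-dense in $I$ is automatically $\delta$-dense in $Y$), while Lemma~\ref{final-exc} supplies the complementary bound $\tau_\delta(x)\le \tau^Y_{\kappa\delta}(x)$ for $x\in Y$. Second, I would compare $\tau^Y_\delta$ with the induced cover time $\Tau_\delta$ via the Kac-type identity $\tau^Y_\delta(x)=R_{\Tau_\delta(x)}(x)$ together with Lemma~\ref{smalldev} and the summability $\sum_u \mu_Y(A_u)<\infty$ guaranteed by Lemma~\ref{lem:LD}.

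For the upper bound, fix $\delta>0$ and set $N=\lceil 1/\mu(Y)\rceil$. Grouping the tail sum into blocks of length $N$ and using monotonicity,
\begin{equation*}
\E_{\mu_Y}(\tau^Y_{\kappa\delta}) \le N + N\sum_{u\ge 1}\mu_Y\bigl(\tau^Y_{\kappa\delta}> u/\mu(Y)\bigr).
\end{equation*}
The first implication of Lemma~\ref{smalldev} gives $\{\tau^Y_{\kappa\delta}>u/\mu(Y)\}\subseteq A_u\cup\{\Tau_{\kappa\delta}>u/(1+\varepsilon\mu(Y))\}$. Summing and bounding the second piece by integral comparison for decreasing tails (with change of variable $t=u/(1+\varepsilon\mu(Y))$) yields
\begin{equation*}
\sum_{u\ge 1}\mu_Y\bigl(\Tau_{\kappa\delta}>u/(1+\varepsilon\mu(Y))\bigr)\le (1+\varepsilon\mu(Y))\E_{\mu_Y}(\Tau_{\kappa\delta}).
\end{equation*}
Since $\E_{\mu_Y}(\Tau_{\kappa\delta})\ge 1$, the additive constants can be absorbed into a multiplicative constant, and combining with Lemma~\ref{final-exc} produces $\E_{\mu_Y}(\tau_\delta)\le C\E_{\mu_Y}(\Tau_{\kappa\delta})$.

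The lower bound is symmetric. Applying the second implication of Lemma~\ref{smalldev} with $L=\lceil 1/(1-\varepsilon\mu(Y))\rceil$ (choosing $\varepsilon$ small enough that $\varepsilon\mu(Y)<1$) gives $\{\Tau_\delta>u/(1-\varepsilon\mu(Y))\}\subseteq A_u\cup\{\tau^Y_\delta>u/\mu(Y)\}$, and the same block-summation argument produces $\E_{\mu_Y}(\Tau_\delta)\le C'\bigl(1+\mu(Y)\E_{\mu_Y}(\tau^Y_\delta)\bigr)$. Combined with $\tau^Y_\delta\le \tau_\delta$ and $\E_{\mu_Y}(\tau_\delta)\ge 1$, this yields $c\E_{\mu_Y}(\Tau_\delta)\le \E_{\mu_Y}(\tau_\delta)$.

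The main technical point is not a conceptual obstacle but a bookkeeping one: one must ensure that all the multiplicative constants depend only on $\mu(Y)$, $\varepsilon$, $\kappa$ and $\sum_u\mu_Y(A_u)$, uniformly in $\delta$. This is taken care of by the integral-comparison step together with the observation that $\E_{\mu_Y}(\Tau_\delta),\,\E_{\mu_Y}(\tau_\delta)\ge 1$, which lets the additive constants be absorbed into the multiplicative ones for every $\delta>0$.
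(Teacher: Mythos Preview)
Your proposal is correct and follows essentially the same route as the paper: reduce $\tau_\delta$ to $\tau^Y_\delta$ via Lemma~\ref{final-exc}, then compare $\tau^Y_\delta$ with $\Tau_\delta$ by writing expectations as tail sums, splitting each tail event into $A_u$ versus $A_u^c$, and invoking Lemma~\ref{smalldev} on the good part together with the summability of $\mu_Y(A_u)$ on the bad part. The only cosmetic difference is that the paper sums directly over $n$ and indexes the large-deviation sets by $A_{n\mu(Y)}$, whereas you group into blocks of length $\lceil 1/\mu(Y)\rceil$ and use integer indices $A_u$; your bookkeeping is in fact slightly tidier, since it avoids non-integer subscripts and makes the absorption of additive constants explicit.
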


\begin{proof}
Since $\mu_Y$ is supported on $Y$, by Lemma \ref{final-exc}
$$\E_{\mu_Y}(\tau_\delta^Y) \le \E_{\mu_Y}(\tau_\delta) \le \E_{\mu_Y}(\tau_{\kappa\delta}^Y)$$
so it is enough to obtain lower and upper bounds on $\E_{\mu_Y}(\tau_\delta^Y)$ in terms of $\E_{\mu_Y}(\Tau_\delta)$. To this end
\begin{align*}
\E_{\mu_Y}(\tau_\delta^Y) &= \sum_{n \in \N} \mu_Y(\tau_\delta^Y\ge n) = \sum_{n \in \N} \left(\mu_Y(\tau_\delta^Y \ge n \wedge A_{n\mu(Y)}^c) + \mu_Y(\tau_\delta^Y \ge n \wedge A_{n\mu(Y)}\right) \\
&\le \sum_{n \in \N} \mu_Y\left(\Tau_\delta \ge \frac{n\mu(Y)}{1+\varepsilon \mu(Y)}\right) +\sum_{n \in \N} \mu(A_{n\mu(Y)})  \\
&= \sum_{n \in \N} \mu_Y\left(\frac{\Tau_\delta(1+\varepsilon \mu(Y))}{\mu(Y)} \ge n\right) +\sum_{n \in \N} \mu(A_{n\mu(Y)})  \\
&= \E_{\mu_Y}\left(\frac{\Tau_\delta(1+\varepsilon \mu(Y))}{\mu(Y)}\right) +\sum_{n \in \N} \mu_Y(A_{n\mu(Y)})\\
&= \frac{1+\varepsilon \mu(Y)}{\mu(Y)} \E_{\mu_Y}(\Tau_\delta) + \sum_{n \in \N} \mu_Y(A_{n\mu(Y)})
\end{align*}
where to get the inequality we have used Lemma \ref{smalldev}. Note that since $\sum_{n \in \N} \mu_Y(A_{n\mu(Y)})<\infty$, it is just a constant which is independent of $\delta$, therefore we have the upper bound.

Similarly for the lower bound,
\begin{align*}
\E_{\mu_Y}(\tau_\delta^Y) &\ge  \sum_{n \in \N} \mu_Y(\tau_\delta^Y \ge n \wedge A_{n\mu(Y)}^c) \ge \sum_{n \in \N}  \mu_Y\left(\Tau_\delta \ge \frac{n\mu(Y)}{1-\varepsilon \mu(Y)}\right)\\
&= \E_{\mu_Y}\left(\frac{\Tau_\delta(1-\varepsilon \mu(Y))}{\mu(Y)}\right) = \left(\frac{1-\varepsilon\mu(Y)}{\mu(Y)}\right)\E_{\mu_Y}(\Tau_\delta)
\end{align*}
where again we have used Lemma \ref{smalldev}.
\end{proof}

Proposition \ref{muY} will allow us to obtain a lower bound on $\E_\mu(\tau_\delta)$ since $\E_\mu(\tau_\delta) \ge \mu(Y)\E_{\mu_Y}(\tau_\delta)$. Next we will obtain an upper bound on $\E_\mu(\tau_\delta)$ in terms of $\E_{\mu_Y}(\tau_\delta^Y)$. 

For this we will view the system as a tower $(\Delta, f_\Delta, \mu_\Delta)$ built over $(Y, F = f^{\tau_Y}, \mu_Y)$, recalling we denote the domains of $F$ by $\Y=\{Y_i\}_i$ and $\tau_Y|_{Y_i} = \tau_i$.  We will borrow the language of Young Towers, but we do not assume any structure other than that assumed in Theorems~\ref{cover-induced} and \ref{cover-induced2}, for example we do not assume the return map to the base is Markov. We call the base, which corresponds to $Y$, $\Delta_0$, and we'll use notation $\Delta_{i, j}$ for domains in $\Delta$, where $\Delta_{i, 0}\subset \Delta_0$, corresponding to $Y_i$; and $f_\Delta^j(\Delta_{i, 0})=\Delta_{i,j}$ for $j\le \tau_i-1$ and $f_\Delta^{\tau_i}(\Delta_{i, 0})\subset \Delta_{0}$ (corresponding to $F(Y_i)\subset Y$). Points in $\Delta_{i, j}$ are denoted $(x, j)$ for $x\in Y$.  Then $f_\Delta^{\tau_Y}$ on $\Delta_0$ corresponds to $F$ on $Y$.   We let $\pi:\Delta \to I$ be the natural projection $\pi(x,j)=f^j(x)$. Note that since $F$ is a first return map, $\pi|_{\Delta_0}:\Delta_0\to Y$ is bijective. We define $\mu_\Delta$ to be the measure $\mu_\Delta(\Delta_{i,j})=\mu_\Delta(\Delta_{i,0})=\mu(Y_i)$. Then $\pi_*\mu_\Delta=\mu$ and $f\circ \pi=\pi \circ f_\Delta$. Let $\mu_{\Delta_0} = \mu_{\Delta}|_{\Delta_0}$ - we will similarly restrict $\mu_\Delta$ to $\Delta_{i, j}$ to get $\mu_{\Delta_{i, j}}$.  We will also consider the symbolic coding for $F$ as in \S \ref{section:uniform}; in this setup $\Sigma$ can be taken to be the set of all $\i=(i_0, i_1 \ldots)$ such that for some $x \in Y$, $x \in \Delta_{i_0,0}$, $F(x) \in \Delta_{i_1,0}$ and so on. Moreover, in this case $\Pi(\i)=x$. Let $\tmu_Y$ be the measure on $\Sigma$ such that $\Pi_*\tmu_Y=\mu_Y$, which by our assumptions is quasi-Bernoulli. The relationship between $\tmu_Y$ and $\mu_{\Delta_0}$ is the following: $\mu_{\Delta_0}=\mu(Y)\Pi_*\tmu_Y$. 

We can also extend the cover time function to $\Delta_0$ in the natural way:
$$\tau_\delta^{\Delta_0}(x,j):=\inf\{n \in \N: \{\pi(x,j), \ldots, \pi(f_\Delta^n(x,j))\}\; \textnormal{is $\delta$ dense in $\Delta_0$}\},$$
where the metric on $\Delta_0$ is the one induced from $Y$. Since $f\circ \pi=\pi \circ f_\Delta$ and by definition of $\mu_\Delta$, we have $\E_\mu(\tau^Y_{\kappa\delta})=\E_{\mu_\Delta}(\tau_{\kappa\delta}^{\Delta_0}).$

\begin{proposition} \label{tower}
Suppose $F$ and $\mu_Y$, with partition $\Y$, satisfy \ref{a1}-\ref{a4}, \ref{a6}, \eqref{measdiam} and \eqref{bd} and that  $\mu_Y(\tau_Y>n) = O(n^{-\gamma})$ where $\gamma>2$. Then for some $\varepsilon>0$, and $C>0$
$$\E_\mu(\tau_\delta) \le C \E_{\mu_Y}(\tau_{\varepsilon\delta}^Y).$$
\end{proposition}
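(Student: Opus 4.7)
The plan is to lift the dynamics to the Young tower $(\Delta, f_\Delta, \mu_\Delta)$ via the projection $\pi: \Delta \to I$, exploiting $\pi_*\mu_\Delta = \mu$ to rewrite $\E_\mu(\tau_\delta) = \E_{\mu_\Delta}(\tau_\delta \circ \pi)$. For $(x,j) \in \Delta_{i,j}$, the $f$-orbit of $\pi(x,j) = f^j(x)$ reaches $Y$ at $F(x)$ after $r_i - j$ iterates, and then Lemma~\ref{final-exc} replaces $\tau_\delta(F(x))$ by $\tau_{\kappa\delta}^Y(F(x))$, yielding the pointwise estimate
$$\tau_\delta(\pi(x,j)) \le (r_i - j) + \tau_{\kappa\delta}^Y(F(x)).$$

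Integrating against $\mu_\Delta$, using that a function of $x$ alone integrates as $\mu(Y)\int \tau_Y h \, d\mu_Y$ while $(r_i - j)$ sums to $r_i(r_i+1)/2$ along each tower column, splits the resulting bound into a constant term plus the correlation integral $\mu(Y)\int_Y \tau_Y(x)\, \tau_{\kappa\delta}^Y(F(x))\, d\mu_Y(x)$. The constant term is finite since the tail assumption $\mu_Y(\tau_Y > n) = O(n^{-\gamma})$ with $\gamma > 2$ guarantees $\E_{\mu_Y}(\tau_Y^2) < \infty$.

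The main obstacle is controlling this correlation integral, since $\tau_Y$ and $\tau_{\kappa\delta}^Y \circ F$ both depend on the forward $f$-orbit of $x$ and are not independent. The plan is to decouple them by pushing the cover time further into the future: fix $j_0$ large enough that $\gamma(j_0) \le 1$ in \ref{a6}, and use the pointwise bound
$$\tau_{\kappa\delta}^Y \circ F \le \sum_{k=1}^{j_0} \tau_Y \circ F^k + \tau_{\kappa\delta}^Y \circ F^{j_0+1},$$
which follows by first running the $f$-orbit from $F(x)$ to $F^{j_0+1}(x)$ (taking $\sum_{k=1}^{j_0}\tau_Y(F^k(x))$ iterates) and then starting to cover. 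The contribution of the finite sum to the correlation integral is bounded by $j_0\E_{\mu_Y}(\tau_Y^2)$ via Cauchy--Schwarz and $F$-invariance of $\mu_Y$, which is a finite constant.

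For the remaining piece, observe that $\tau_Y$ depends only on the first symbol in the $F$-symbolic coding while $\tau_{\kappa\delta}^Y \circ F^{j_0+1}$ depends only on symbols from position $j_0 + 1$ onward, creating a symbolic gap of $j_0$. Extending the cylinder-based $\psi$-mixing ratio bound in \ref{a6} to non-negative $L^1$ functions by a standard monotone class argument, and using $F$-invariance together with Kac's lemma $\E_{\mu_Y}(\tau_Y) = 1/\mu(Y)$, yields
$$\int \tau_Y\cdot (\tau_{\kappa\delta}^Y \circ F^{j_0+1})\, d\mu_Y \le (1 + \gamma(j_0)) \mu(Y)^{-1}\, \E_{\mu_Y}(\tau_{\kappa\delta}^Y).$$
Assembling everything gives $\E_\mu(\tau_\delta) \le C_0 + C_1\E_{\mu_Y}(\tau_{\kappa\delta}^Y)$ for constants independent of $\delta$, and since $\tau_{\kappa\delta}^Y \ge 1$ we absorb $C_0$ into $C_1$ and conclude with $\varepsilon = \kappa$.
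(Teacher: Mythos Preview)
Your argument is correct and follows the same Young-tower lift and pointwise bound $\tau_\delta(\pi(x,j)) \le (r_i - j) + \tau_{\kappa\delta}^Y(F(x))$ as the paper, but you decouple the resulting correlation integral $\int \tau_Y \cdot (\tau_{\kappa\delta}^Y \circ F)\, d\mu_Y$ differently. The paper passes through the symbolic subpartition $\mathcal{P}_\delta$ and applies the quasi-Bernoulli property \ref{a5} directly to split $\tmu_Y([i\j])\le C_*\tmu_Y([i])\tmu_Y([\j])$; this costs a further shrinking of scales (so $\varepsilon = t\kappa/(3T)$) because one must compare $\tau_{\kappa\delta}^Y$ with the subpartition cover time and back. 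You instead insert a buffer of $j_0$ iterates of $F$, paying a finite Cauchy--Schwarz term controlled by $\E_{\mu_Y}(\tau_Y^2)<\infty$, and then invoke $\psi$-mixing \ref{a6} across the resulting symbolic gap; this uses only the stated hypotheses (note the proposition does not list \ref{a5}) and yields the larger $\varepsilon=\kappa$. Both routes are short; yours is arguably cleaner here since it avoids the subpartition machinery and matches the hypotheses exactly, while the paper's route reuses the quasi-Bernoulli splitting that is already central elsewhere in the argument.
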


\begin{proof}
By Lemma \ref{final-exc} we have
$$\E_\mu(\tau_\delta) \le \E_\mu(\tau^Y_{\kappa\delta})=\E_{\mu_\Delta}(\tau_{\kappa\delta}^{\Delta_0}).$$
For any $x \in \Delta_{i,0}$,
$$ \tau^{\Delta_0}_{\kappa\delta}((x, j)) \le \tau^{\Delta_0}_{\kappa\delta}(F(x),0)+\tau_i-j.$$
Hence
$$\E_{\mu_{\Delta_{i, j}}}\left(\tau_{\kappa\delta}^{\Delta_0}\right)\le \E_{\mu_{\Delta_{i, 0}}}\left(\tau_{\kappa\delta}^{\Delta_0}\circ F+\tau_i-j \right).$$
Summing over the column (i.e. summing over $j$) we obtain
$$\sum_{j=0}^{\tau_i-1}\E_{\mu_{\Delta_{i, j}}}\left(\tau_{\kappa\delta}^{\Delta_0}\right) \le \tau_i\E_{\mu_{\Delta_{i, 0}}}\left(\tau_{\kappa\delta}^{\Delta_0}\circ F \right)+  \mu_{\Delta}(\Delta_{i, 0})\sum_{j=0}^{\tau_i-1}( \tau_i-j).$$
Now summing over $i$ we obtain
\begin{equation}\E_{\mu_\Delta}(\tau^{\Delta_0}_{\kappa\delta})=\sum_i\sum_{j=0}^{\tau_i-1} \E_{\mu_{\Delta_{i,j}}}(\tau^{\Delta_0}_{\kappa\delta}) \le \sum_i \tau_i\E_{\mu_{\Delta_{i,0}}}(\tau^{\Delta_0}_{\kappa\delta} \circ F) +R \label{tower exp}
\end{equation}
where $R:=\sum_i\mu_{\Delta}(\Delta_{i, 0})\sum_{j=0}^{\tau_i-1}j= \sum_nn\mu_Y(\tau_Y>n)<\infty$ since we have assumed $\mu_Y(\tau_Y>n) = O(n^{-\gamma})$ for some $\gamma>2$. 

Now, let $\mathcal{P}_\delta$ be defined as in \eqref{Pd} for the system $F:Y \to Y$ (which is possible by our assumptions on $F$ and $\mu_Y$). Put $\mathcal{P}=\mathcal{P}_{\kappa\delta/T}$. By definition of $\mu_{\Delta_{i,0}}$,
\begin{align*}
\E_{\mu_{\Delta_{i,0}}}(\tau^{\Delta_0}_{\kappa\delta} \circ F) &=\mu(Y) \E_{\mu_Y|\Pi([i])}(\tau_{\kappa\delta}^Y \circ F) \le \mu(Y) \E_{\tmu_Y|[i]}(\tau_{\mathcal{P}}\circ F) = \mu(Y) \sum_{\j \in \P} \tmu_Y([i\j])\left(\sum_k \tau_{j_k}\right) \\
&\le C_*  \mu(Y) \tmu_Y([i]) \sum_{\j \in \P} \tmu_Y([\j])\left(\sum_k \tau_{j_k}\right)= C_*\mu_{\Delta_0}(\Delta_{i,0}) \E_{\tmu_Y}(\tau_{\P}) \\
&\le C_*\mu_{\Delta_0}(\Delta_{i,0}) \E_{\mu_{Y}}(\tau_{t\kappa\delta/3T}^{Y})= \frac{C_*}{\mu(Y)}\mu_{\Delta_0}(\Delta_{i,0}) \E_{\mu_{\Delta_0}}(\tau_{t\kappa\delta/3T}^{\Delta_0})
\end{align*}
where the first inequality follows by \eqref{ballstocyl}, the second inequality by the quasi-Bernoulli property \ref{a5} of $\mu_Y$ and the final inequality follows by \eqref{ballstocyl2}. 

Now putting back into \eqref{tower exp} and using that  $\sum_i \tau_i\mu_{\Delta_0}(\Delta_{i,0})=1$ we obtain
\begin{align*}
\E_\mu(\tau_\delta) \le \E_{\mu_\Delta}(\tau_{\kappa\delta}^{\Delta_0}) &\le \frac{C_*}{\mu(Y)}\sum_i \tau_i \mu_{\Delta_0}(\Delta_{i,0}) \E_{\mu_{\Delta_0}}(\tau_{t\kappa\delta/3T}^{\Delta_0})+R  \\
&=\frac{C_*}{\mu(Y)} \E_{\mu_{\Delta_0}}(\tau_{t\delta/12T}^{\Delta_0})+K=C_* \E_{\mu_Y} (\tau_{t\kappa\delta/3T}^Y)+R
\end{align*}
which completes the proof.
\end{proof}

\begin{proof}[Proof of Theorems \ref{cover-induced} and \ref{cover-induced2}]
By our assumption, there exists $Y \subset I$ such that the first return map $F:Y \to Y$ equipped with the induced measure $\mu_Y$ satisfies the assumptions of Theorem \ref{cover}. Since Lemma~\ref{lem:LD} implies $\sum_{u=1}^\infty \mu_Y(A_u)<\infty$,
\begin{align*}
\E_\mu(\tau_\delta) \ge \mu(Y) \E_{\mu_Y}(\tau_\delta) \ge c\mu(Y) \E_{\mu_Y}(\Tau_\delta) 
\end{align*}
where in the second inequality we have used Proposition \ref{muY}. Now, in the setting for general $\mu$ (ie. proof of Theorem \ref{cover-induced2}) the lower bound is obtained by applying Theorem \ref{cover} to deduce that $ \E_{\mu_Y}(\Tau_\delta) \ge \frac{c}{M_\mu(\delta/\varepsilon)}$. In the setting where  \eqref{eq:m_scale} holds (ie. proof of Theorem \ref{cover-induced}) observe that  \eqref{eq:m_scale} holds on $Y$, thus we can again apply Theorem \ref{cover} to deduce that $ \E_{\mu_Y}(\Tau_\delta) \ge c \delta^{-s_f}\log(1/\delta)$.

For the upper bound we have
\begin{align*}
\E_\mu(\tau_\delta) \le C \E_{\mu_Y}(\tau_\delta^Y) \le C\E_{\mu_Y}(\tau_\delta) \le C\E_{\mu_Y}(\Tau_{\kappa\delta}) 
\end{align*}
where in the first inequality we have used Proposition \ref{tower}, in the third we have used Proposition \ref{muY}. In the case for general $\mu$  (ie. proof of Theorem \ref{cover-induced2}) we obtain the upper bound by applying Theorem \ref{cover} to get $\E_{\mu_Y}(\Tau_{\kappa\delta})  \le  \frac{C}{M_\mu(\varepsilon\kappa \delta)}\log(1/\delta)$. In the setting where  \eqref{eq:m_scale} holds we can again apply Theorem \ref{cover} to deduce that $\E_{\mu_Y}(\Tau_{\kappa\delta}) \le C\delta^{-s}\log(1/\delta)$.
\end{proof}

\section{Examples} \label{examples}

\subsection{Full branched interval maps with acip}

Let $f$ be a full branched map of an interval $I$ (i.e. $\Lambda=I$) and assume each of the branches $f|_{Z}$ is $C^{1+Lip}$, such that there is a uniform bound on the Lipschitz constants of the derivatives. Assume $|Df| \ge \gamma^{-1}>1$ on each $Z \in \Z$ and $\frac{|Z|}{|Z'|}$ is uniformly bounded for adjacent intervals $Z, Z'\in \Z$. The potential $\phi=-\log|Df|$ gives rise to an acip $\mu$ for $f$. 

Since $f$ is full-branched, it is Markov and satisfies BIP. The potential $\phi$ is Lipschitz with uniform Lipschitz constants, hence satisfies \eqref{bd}. Moreover $\mu$ is necessarily Gibbs for the potential $\phi=-\log|Df|$, so by the discussion in \S\ref{GM} the system $(f,\mu)$ satisfies \ref{a1}-\ref{a6}. In this case the conformal measure $m$ is Lebesgue measure, hence \eqref{measdiam} is satisfied for $s=1$.  For \eqref{eq:ratios}, this follows from the uniform bounds on the density and the bound on $\frac{|Z|}{|Z'|}$.  In particular Theorem \ref{cover} applies.


\subsection{Gauss map}
The Gauss map $f:(0, 1]\to (0, 1]$ is defined by $f(x)=\frac{1}{x} \mod 1$, so defining our partition $\Z$ as $\{Z_n\}_n=\{(\frac{1}{n+1}, \frac{1}{n})\}_n$, $f$ is continuously differentiable on each element of $\Z$.  We set   $\phi=-\log|Df|$ and $m$ be Lebesgue measure on $(0,1]$. Then we have an invariant density is $g(x)=\frac{1}{\log 2} \frac{1}{1+x}$ defining an acip $\mu$: call $((0, 1], f,\phi,  \mu)$ the \emph{Gauss system}.

\ref{a1} fails for this system, however as in \cite[Section 2.6.3]{BruDemTod18} the potential satisfies the following weaker H\"older distortion control on the cylinder sets and the system can be shown to satisfy our theory as below.

\begin{lemma} \label{holder}
There exists $C_d>0$ such that $|e^{S_n\phi(x)-S_n\phi(y)}-1| \le C_d|f^nx-f^ny|^{1/2}$ whenever $f^i(x)$ and $f^i(y)$ lie in the same element of $\Z$ for each $i=0, \ldots, n-1$. 
\end{lemma}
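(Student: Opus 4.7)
The plan is to exploit the explicit form of the Gauss potential $\phi(z) = 2\log z$ (since $|Df(z)| = 1/z^2$), writing
\[S_n\phi(x) - S_n\phi(y) = 2\sum_{i=0}^{n-1}(\log f^i x - \log f^i y),\]
and reducing the problem to the H\"older-type bound
\[\sum_{i=0}^{n-1}|\log f^i x - \log f^i y| \le C|f^n x - f^n y|^{1/2},\]
since uniform boundedness of the left-hand side combined with $|e^t - 1| \le e^{|t|}|t|$ then yields the claim with a universal $C_d$.

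The core pointwise estimate is: for $z, w \in Z_k = (1/(k+1), 1/k)$, the mean value theorem gives $|\log z - \log w| \le (k+1)|z - w|$, and combining with $|z - w| \le |Z_k| = 1/(k(k+1))$ produces the \emph{uniform} H\"older bound $|\log z - \log w| \le \sqrt{2}\,|z - w|^{1/2}$, independent of $k$. Despite $(\log)'$ being unbounded across $\{Z_k\}_k$, a uniform H\"older-$\tfrac{1}{2}$ estimate survives because steep cylinders are correspondingly short.

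To pass from $|f^i x - f^i y|^{1/2}$ to a bound involving $|f^n x - f^n y|^{1/2}$, I would use the classical fact that Gauss cylinders at depth $k$ satisfy $|Z| \le c\alpha^k$ for some $\alpha \in (0,1)$ (a consequence of the Fibonacci bound $q_k \ge F_{k+1}$ on convergent denominators, giving $\alpha = ((1+\sqrt{5})/2)^{-2}$). Summing the pointwise estimate with the crude cylinder diameter bound $|f^i x - f^i y| \le |f^i Z| \le c\alpha^{n-i}$ first yields bounded distortion of $f^n$ on cylinders: there is $K$ such that $|(f^n)'(x)/(f^n)'(y)| \le K$ for all $x, y$ in any cylinder $Z \in \Z^n$. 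With bounded distortion in hand, the inverse branch $g = (f^{n-i}|_{f^i Z})^{-1} \colon (0,1) \to f^i Z$ satisfies $\sup|g'| \le K|f^i Z|$, upgrading the crude diameter estimate to $|f^i x - f^i y| \le Kc\alpha^{n-i}|f^n x - f^n y|$. Substituting this back and summing the geometric series $\sum_{j\ge 1}\alpha^{j/2}$ gives the required H\"older bound, and exponentiating finishes the proof.

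The main obstacle is the non-uniform expansion of $f$: since $|f'|$ can be arbitrarily close to $1$ near $x = 1$, one cannot apply a simple pointwise geometric contraction of the form $|f^i x - f^i y| \le \lambda^{-(n-i)}|f^n x - f^n y|$ as in the uniformly expanding case. The workaround is the two-step bootstrap above, where the Fibonacci-driven (global) geometric decay of cylinder diameters substitutes for the missing uniform pointwise expansion and, together with the uniform pointwise H\"older estimate for $\log$, first delivers bounded distortion and only then the desired H\"older distortion bound.
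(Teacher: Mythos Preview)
Your argument is correct. The paper itself does not give a proof but simply cites \cite[Lemma 2.9]{BruDemTod18}, so there is no in-paper argument to compare against; your write-up is a self-contained version of the standard proof one would expect to find behind that citation.

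A couple of minor remarks. First, in the bootstrap you implicitly use bounded distortion for $f^{n-i}$ on $(n-i)$-cylinders (not just for $f^n$ on $n$-cylinders) when bounding $\sup|g'|\le K|f^iZ|$; this is of course immediate since your distortion bound is uniform in the depth, but it is worth making explicit. Second, the concluding step uses $|e^t-1|\le e^{|t|}|t|$ together with the uniform bound on $|t|$ that you already established in the first pass of the bootstrap; as written this is slightly implicit, so spelling out that $|S_n\phi(x)-S_n\phi(y)|\le 2C$ uniformly (from the crude diameter bound) before invoking the exponential inequality would tighten the exposition. Neither point is a gap.
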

\begin{proof}
See \cite[Lemma 2.9]{BruDemTod18}.
\end{proof}

We'll show that Lemma \ref{holder} suffices to prove Proposition \ref{prop:u} and Proposition \ref{ly}, so that Theorem \ref{cover} holds for the Gauss system. 

First we verify that the remainder of our assumptions hold for the Gauss system. For \ref{a2} notice that $|e^{\phi}|_\infty \le 1$ while $|e^{S_2\phi}|_\infty <1$ thus $|e^{S_n\phi}|_\infty$ converges to 0 exponentially fast in $n$, while $\Lp^n1$ converges to $g$ which is bounded away from 0 on $[0,1]$. \ref{a3} holds since $f$ is full branched and the potential satisfies Lemma \ref{holder}. \ref{a4} holds since $f$ is full branched. \ref{a5} and \ref{a6} hold since $\mu$ is Gibbs. \eqref{measdiam} and \eqref{eq:m_scale} hold for $s=s_f=1$ since $m$ is Lebesgue measure. \eqref{bd} follows from Lemma \ref{holder}. \eqref{eq:ratios} holds since $1 \le \frac{\mu(Z_{n-1})}{\mu(Z_n)} \le \frac{n+1}{n-1}$ for all $n \ge 2$. 
Instead of \eqref{n1} we choose $n_1$ such that $\frac{2n_1+5}{2} \sup_I e^{S_{n_1} \phi} |\phi|_{BV}<1$ (the reason for this will be made clear later).

We note that Proposition \ref{prop:u} still holds for the Gauss system since the only time \ref{a1} is used in the proof is in \eqref{a1used}, where \eqref{bd} can be used instead, to obtain the same conclusion.

We also verify that Lemma \ref{prels} holds: (a) holds by induction on \eqref{eq:a2'}, (b) holds with $C_d=1$ since $e^\phi$ is monotonic on each $Z \in \Z$ so $|\phi|_{BV,Z} \le \sup_Z e^\phi$ and (c) holds by induction on (b), using (a). 

Finally, we are ready to demonstrate that the uniform Lasota-Yorke inequalities hold in Proposition \ref{ly}. \ref{a1} is used in two places in the proof: in \eqref{a1used2} and \eqref{a1used3}. \eqref{a1used2} holds with $C_d=1$ by Lemma \ref{prels}(b).  \eqref{a1used3} holds using \eqref{bd}. Hence the contracting term in \eqref{ly3} becomes $\frac{2n+5}{2} \sup_I e^{S_{n_1} \phi} |\phi|_{BV}$, which is the same expression, just with $C_d=1$, hence since $n_1$ was chosen such that $\frac{2n_1+5}{2} \sup_I e^{S_{n_1} \phi} |\phi|_{BV}<1$, the proof of Proposition \ref{ly} is complete. Hence we have shown that Theorem \ref{cover} holds for the Gauss system.

\subsection{Manneville-Pomeau maps}\label{sec:MP}

For $\alpha \in (0,1)$, we will study the class of Manneville-Pomeau maps defined by
\begin{equation*}
f=f_\alpha:x\mapsto \begin{cases} x(1+2^\alpha x^\alpha) & \text{ if } x\in [0, 1/2),\\
2x-1 & \text{ if } x\in [1/2, 1].\end{cases}
\end{equation*}
(This is the simpler form given by Liverani, Saussol and Vaienti, often referred to as LSV maps.) We let $\Z=\{[0, 1/2), [1/2, 1]\}$.  These maps all have an acip $\mu=\mu_\alpha$, which is an equilibrium state for the potential $\phi=-\log Df$.  Our first returns will be taken to the set $Y=[1/2, 1]$. 
The induced system $(Y, F=f^{\tau_Y})$ is a full-branched Gibbs Markov map with respect to the induced potential $-\log DF$: letting $Y_i$ denote the interval on which $\tau_Y=i$, $F|_{Y_i}= f^i|_{Y_i}$ is a diffeomorphism.  Since $Y \cup f(Y)=[0,1]$, (\ref{eq:Ncover}) holds with $N_1=1$.  We write $\Y=\{Y_i\}_i$. Adjacent intervals, $Y_i$ and $Y_{i-1}$ can be shown (see for example \cite[Corollary 1]{Sar02}) to have $\frac{\mu_Y(Y_{i-1})}{\mu_Y(Y_{i})} \sim \left(\frac{i}{i-1}\right)^{\frac1\alpha+1}$, so \eqref{eq:ratios} holds.  Moreover, the conformal measure is Lebesgue, so \eqref{eq:m_scale} holds with $s_f=1$, and the induced system is $\psi$-mixing, which we used in the proof of Proposition~\ref{prop:RTS} (though in fact we only require $\phi$-mixing, which is a weaker condition).  Thus to apply Theorem~\ref{cover-induced}, it is sufficient that $\mu(\tau_Y>n) = O(n^{-\gamma})$ for some $\gamma>2$.  It is well known that for $f_\alpha$, $\mu(\tau_Y>n) = O(n^{-1/\alpha})$ (again, see for example \cite[Corollary 1]{Sar02}), so our results apply to the case $\alpha\in (0, 1/2)$.

\begin{remark}
Hitting Time statistics results hold for all $\alpha\in (0, 1)$ at all points, see \cite{FreFreTodVai16}, so it is a natural question to ask if the results here should also hold in that generality.  However, in this setting we are required to take expectations, which means summing over more quantities, so it is not clear if the restriction to $(0, 1/2)$, which is used twice in our argument, is an artefact of our proof method or is intrinsic for this problem.  It may be relevant to note that the range $\alpha\in (0, 1/2)$ is the range of parameters for which the Central Limit Theorem holds: for $\alpha\in (1/2, 1)$ it fails (this is also seen in the i.i.d. case with observables outside $L^2$).  
\end{remark}


\subsection{Systems with slow covering}

In previous related studies of the cover time \cite{JurMor20, bjk}, the leading term for the expected cover time $\E_\mu(\tau_\delta)$ always obeyed a power law in $1/\delta$, that is, $\md \mu<\infty$ and $\E_\mu(\tau_\delta) \approx \left(1/\delta\right)^{\md \mu}$. However, our setup presents natural examples of systems in which average orbits become dense in the state space at a much slower rate. In these examples $\md \mu=\infty$, therefore the formula for the expected cover time can no longer be expressed as a power law in $1/\delta$. 

\begin{example} Let $f:[0,1] \to [0,1]$ be the Manneville-Pomeau map
\begin{align*}
f(x)= 
\begin{cases} f_1(x)= x(1+2^\alpha x^\alpha), \hspace{1.5cm} x\in [0,1/2) \\
f_2(x)= 2x-1, \hspace{2.4cm} x \in [1/2,1]
\end{cases}
\end{align*}
although this time we equip it with the measure $\mu$ which is given as the pushforward of the $(1/2,1/2)$ Bernoulli measure on $\{1,2\}^\N$ through the coding map $\Pi:\{1,2\}^\N \to [0,1]$,
$$\Pi(i_0 \ldots i_{n-1}\ldots )=\lim_{n \to \infty} f_{i_0}^{-1} \circ \cdots \circ f_{i_{n-1}}^{-1}([0,1)).$$
Another way of viewing this is that $\mu$ is the measure of maximal entropy, the equilibrium state for the constant potential $-\log 2$.  

Exactly as in \S \ref{sec:MP}, we write $\Z=\{[0, 1/2), [1/2, 1]\}$ and we can take a first return map $F$ to $Y=[1/2, 1]$ with domains $\Y=\{Y_i\}_i$.  The induced potential now is $\phi(x) = -\tau_Y(x)\log 2$, we see that this system satisfies the conditions in Section~\ref{props}.  Moreover, since we can compute $\mu_Y(Y_i) = 1/2^i$ (note that the conformal measure and the invariant measure coincide here), \eqref{eq:ratios} clearly holds with $C=2$ and \eqref{measdiam} holds since the diameters of $Y_i$ are polynomial in $k$, while the $\mu$-measures are exponential.  Theorem \ref{cover-induced2} therefore applies.

Then for each $n \in \N$, $\mu(\Pi([1^n]))=1/2^n$ whereas $\diam(\Pi([1^n])) \sim \frac{1}{2} (\alpha n)^{-1/\alpha}$. In particular $M_\mu(\delta) \le  (2^{1/\delta^\alpha})^{-1/(\alpha 2^\alpha) }$ so by Theorem \ref{cover-induced2}, $\E_\mu(\tau_\delta) \ge  (2^{1/\delta^\alpha})^{1/(\alpha 2^\alpha) }$. The key point here is that it is very difficult for this system to cover a neighbourhood of 0, which drives the expected cover time up.

\end{example}

\begin{example} Let $a_0=0$ and for $k\ge 1$, let $a_k = \sum_{j=1}^k\frac1{ck^2}$ where $c=\frac{\pi^2}6$.   Then let $f:[0,1) \to [0,1)$ be the map
$$f(x) = ck^2(x- a_{k-1}) \text{ for } x\in [a_{k-1}, a_k):=Z_k$$
equipped with the measure $\mu$ which is the equilibrium state (and conformal measure) for the potential $\phi|_{Z_k} =-k\log2$, for which the conditions in Section~\ref{props} hold.  Moreover, for $\Z=\{Z_k\}_k$,
\eqref{eq:ratios} holds with $C=2$ and \eqref{measdiam} holds since the diameters of $Z_k$ are polynomial in $k$, while the $\mu$-measures are exponential.

This is a Gibbs-Markov system so Theorem \ref{cover2} applies. Then for each $k \in \N$, $\mu(Z_k)=\frac{1}{2^k}$ whereas $\diam(Z_k)=\frac{1}{ck^2}$. In particular for $\delta = \frac{1}{ck^2}$, $M_\mu(\delta) \le \frac{1}{2^{\sqrt{1/c\delta}}}$ so by Theorem \ref{cover2}, $\E_\mu(\tau_\delta) \gtrsim 2^{\sqrt{1/c\delta}}$. In this example, it is very difficult for the system to cover a neighbourhood of 1, which drives the expected cover time up.
\end{example}

\appendix

\section{Proof of Propositions \ref{prop:u} and \ref{prop:u'}}

In this section we prove Propositions \ref{prop:u} and \ref{prop:u'}, and we begin with the former. We start with the simpler case where $\Lambda =I$. Following this, we will describe how the proof can be adapted to hold for more general $\Lambda$. 

\subsection{Proof of Proposition \ref{prop:u} for interval maps} 

The proof in the case that $f$ has finitely many branches is trivial, therefore we focus our attention on the case where $f$ has infinitely many branches.

Throughout the proof, given an interval $J$ we'll denote its diameter by $|J|$. The rough idea is the following. For each small $r$ we will partition $I$ into intervals of diameter roughly $r$ in an iterative way. If a union of neighbouring 1-cylinders can be taken of the correct length then this union forms a ball in $\U_\delta$. For all 1-cylinders which are too large, we split it into 2-cylinders and consider unions of these which have both the correct diameter and whose diameter comprises a small proportion of the diameter of the 1-cylinder. On the other hand, for all 2-cylinders which are too large we iterate the process by splitting them into 3-cylinders. This is continued until we reach $n_1$-cylinders, at which point any $n_1$-cylinder which is too large is split into arbitrary intervals (not necessarily unions of $(n_1+1)$-cylinders) which have the correct diameter and whose diameter comprises a small proportion of the $n_1$-cylinder.

For any interval $J$ let $\ell(J)$ and $r(J)$ denote the left and right end points of $J$ respectively. Beginning from $\ell(I)$ we can uniquely choose a (possibly infinite) union of 1-cylinders $\cup_{J_1 \in \alpha_1} J_1$ which forms an interval such that $\ell(\cup_{J_1 \in \alpha_1} J_1)=\ell(I)$, $|\cup_{J_1 \in \alpha_1} J_1| \ge \delta/2$ and such that for any subcollection $\alpha_1'\subset \alpha_1$ for which $\cup_{J_1 \in \alpha_1'}J_1$ is an interval starting at $\ell(I)$, we have $|\cup_{J_1 \in \alpha_1'}J_1| <\delta/2$. 
If $|\cup_{J_1 \in \alpha_1} J_1| < 3\delta/2$ then we add $\cup_{J_1 \in \alpha_1} J_1$ to $U_1$. Otherwise, there exists $J_1^* \in \alpha_1$ such that $r(J_1^*)=r(\cup_{J_1 \in \alpha_1} J_1)$ and $|\cup_{J_1 \in \alpha_1} J_1|-|J_1^*|<\delta/2$, in particular $|J_1^*| \ge \delta$. In this case we add $\cup_{J_1 \in \alpha_1} J_1$ to $V_1$.  

We then repeat the above, replacing $\ell(I)$ by $r(\cup_{J_1 \in \alpha_1} J_1)$, and continue the process until either $U_1\cup V_1$ cover $I$ or we are left with one interval with right hand end point $r(I)$ and diameter less than $\delta/2$. In this case we add this interval to $R_1$. 

Recall that $c_m$ is given by \ref{a4} and $C_m$ is given by \eqref{measdiam}. Now, let $0<\tilde{\beta}<c_m$ and put 
\begin{equation*}\beta=\frac{\tilde{\beta}c_m}{C_m C_{bd}^s(n_1+1)(1+C_d)^3}.\end{equation*}
Next, for each $A \in V_1$ we take the final 1-cylinder $J_1^*$ contained in $A$ and beginning from $\ell(J_1^*)$ we can uniquely choose a union of 2-cylinders $\cup_{J_2 \in \alpha_2}J_2$ which forms an interval such that $\ell(\cup_{J_2 \in \alpha_2} J_2)=\ell(J_1^*)$, $|\cup_{J_2 \in \alpha_2} J_2| \ge \beta \delta/4$ and such that for any subcollection $\alpha_2'\subset \alpha_2$ for which $\cup_{J_2 \in \alpha_2'}J_2$ is an interval starting at $\ell(J_1^*)$, we have $|\cup_{J_2 \in \alpha_2'}J_2| < \beta \delta/4$. If $|\cup_{J_2 \in \alpha_2}J_2|<\beta \delta/2$ then we add $\cup_{J_2 \in \alpha_2}J_2$ to $U_2$.  Otherwise, there exists $J_2^* \in \alpha_2$ such that $r(J_2^*)=r(\cup_{J_2 \in \alpha_2} J_2)$ and $|\cup_{J_2 \in \alpha_2} J_2|-|J_2^*|<\beta \delta/4$, in particular $|J_2^*| \ge \beta\delta/4$. In this case we add $\cup_{J_2 \in \alpha_2} J_2$ to $V_2$. We then repeat the above, replacing $\ell(J_1^*)$ by $r(\cup_{J_2 \in \alpha_2} J_2)$, and continue the process until either $U_1\cup U_2\cup V_2 \cup R_1$ cover $I$ or we are left with a finite number of intervals (all of which appear on the left or right hand sides of intervals in $V_1$). The ones on the left will have length at most $\delta/2$ and will be made up of a union of 1-cylinders, and we put these into $L_1$. The ones on the right will have length at most $\beta \delta/4$ and these will be put into $R_2$.

We can continue the above process inductively for each $2 \le n \le n_1-1$. For each $A \in V_n$ we take the final $n$-cylinder $J_n^*$ contained in $A$ and beginning from $\ell(J_n^*)$ we can uniquely choose a union of $n+1$-cylinders $\cup_{J_{n+1} \in \alpha_{n+1}}J_{n+1}$ which forms an interval such that $\ell(\cup_{J_{n+1} \in \alpha_{n+1}} J_{n+1})=\ell(J_n^*)$, $|\cup_{J_{n+1} \in \alpha_{n+1}} J_{n+1}| \ge \delta (\beta/4)^n$ and such that for any subcollection $\alpha_{n+1}'\subset \alpha_{n+1}$ for which $\cup_{J_{n+1} \in \alpha_{n+1}'}J_{n+1}$ is an interval starting at $\ell(J_n^*)$, we have $|\cup_{J_{n+1} \in \alpha_{n+1}'}J_{n+1}| < \delta(\beta/4)^{n}$. If $|\cup_{J_{n+1} \in \alpha_{n+1}}J_{n+1}|<\delta (\beta/2) (\beta/4)^{n-1}$ then we add $\cup_{J_{n+1} \in \alpha_{n+1}}J_{n+1}$ to $U_{n+1}$. Otherwise, there exists $J_{n+1}^* \in \alpha_{n+1}$ such that $\delta(J_{n+1}^*)=r(\cup_{J_{n+1} \in \alpha_{n+1}} J_{n+1})$ and $|\cup_{J_{n+1} \in \alpha_{n+1}} J_{n+1}|-|J_{n+1}^*|<\delta(\beta/4)^n$, in particular $|J_{n+1}^*| \ge \delta(\beta/4)^n$. In this case we add $\cup_{J_{n+1} \in \alpha_{n+1}} J_{n+1}$ to $V_{n+1}$. We then repeat the above, replacing $\ell(J_n^*)$ by $r(\cup_{J_{n+1} \in \alpha_{n+1}} J_{n+1})$, and continue the process until either $U_1\cup \cdots \cup U_{n+1}\cup V_{n+1} \cup L_1\cup \cdots \cup L_{n-1}\cup R_1 \cup \cdots \cup R_n$ cover $I$ or we are left with a finite number of intervals (all of which appear on the left or right hand sides of intervals in $V_{n+1}$). The ones on the left will have length at most $\delta(\beta/4)^{n-1}$ and will be made up of a union of $n$-cylinders, and we add this to $L_n$. The ones on the right will have length at most $\delta (\beta/4)^n$, which we put into $R_{n+1}$. 

Let $L_\delta$ be the smallest integer for which any $n_1+L_\delta$ cylinder has diameter at most $\frac{\delta}{3}(\frac{\beta}{4})^{n_1}$. Since $f$ is uniformly expanding, $L_\delta=O(\log(1/\delta))$. For each $A \in V_{n_1}$ we take the final $n_1$-cylinder $J_{n_1}^*$ contained in $A$ and split it up into intervals of length at least $(\beta/4)^{n_1}\frac{\delta}{3}$ and at most $(\beta/4)^{n_1}\delta$, in such a way that each interval is determined as a union of $n_1+L_\delta$ cylinders.
We call this collection $U_{n_1}$.

Append each interval in the set $\bigcup_{i=1}^{n_1} L_i$ to the nearest interval to its right belonging to $\bigcup_{i=1}^{n_1}U_i$, and name the new collection $\bigcup_{i=1}^{n_1}U_i'$.  Then, append each interval in the set $\bigcup_{i=1}^{n_1} R_i$ to the nearest interval to its left belonging to $\bigcup_{i=1}^{n_1}U_i'$. We define this new collection to be $\U_{\delta}$.  \ref{Ub} and \ref{Uc} follow from construction. \ref{Ue} holds since each $U \in\U_\delta$ is given by a finite or countable collection of $n_1+L_\delta$ cylinders. To see \ref{Ud}, note that for any $U \in \U_\delta$ and any $Z \in \Z^i$ where $U \cap Z \neq \emptyset$, if $Z$ is not a subset of $U$ then $|Z| \ge \delta(\beta/4)^{i-1}$ and $|U\cap Z| \le\delta(\beta/4)^{i}+\delta(\beta/2)(\beta/4)^{i-1}$ hence
\begin{align}
\frac{m(U \cap Z)}{m(Z)} & \le \frac{\sup_Z e^{S_i \phi}}{\inf_Z e^{S_i \phi}} \frac{m(f^i(U \cap Z))}{m(f^i(Z))} \le \frac{C_m(C_d+1)}{c_m}\diam(f^i(U \cap Z))^s \label{a1used} \\
&\le \frac{C_m (C_d+1)C_{bd}^s}{c_m} \left(\frac{\diam(U \cap Z)}{\diam(Z)}\right)^s \le \frac{ C_m(C_d+1)C_{bd}^s}{c_m}\left(\frac{ \delta(3\beta/4)(\beta/4)^{i-1}}{\delta(\beta/4)^{i-1}} \right)^s \nonumber\\
&<\frac{C_m(C_d+1)C_{bd}^s\beta}{c_m}=\frac{\tilde{\beta}}{(n_1+1)(1+C_d)^2} \nonumber\end{align} 
where the first inequality is conformality, the second is by \eqref{measdiam}, \ref{a1} and \eqref{a4i}, the third is by \eqref{bd} and the fact that $\diam(f^n(Z)) \le 1$. Finally, to see \ref{Ua}, notice that since $f$ is an interval map, by \ref{a3} $\supp\, m=I$ and so it is enough to show that the diameter of any set in $\U_\delta$ can be bounded above and below by a constant times $\delta$, where the constants are independent of $\delta$. The maximum length any interval in $U_\delta$ can be is $\max\{3\delta/2+\delta/2, \delta/2+\delta \beta/4+\cdots +\delta(\beta/4)^{n_1-1}+(\beta/4)^{n_1}\delta\}$. The minimum length is $(\beta/4)^{n_1}\frac{\delta}{3}$. This proves \ref{Ua}.

{\bf Proof of Proposition \ref{prop:u} for maps $f:\Lambda \to \Lambda$}

The proof of Proposition \ref{prop:u} for more general maps $f:\Lambda \to \Lambda$ (where $\Lambda$ is not an interval) follows similarly to above, with some small changes, which we discuss here. Let $I$ be the closed interval beginning at $\inf_{x \in Z}\inf_{Z \in \Z} x$ and ending at $\sup_{x \in Z}\sup_{Z \in \Z} x$.

In the interval map setting we had that $\bigcup_{Z \in \Z^{n+1}} Z=\bigcup_{Z' \in \Z^n} Z'$, whereas more generally we only have $\bigcup_{Z \in \Z^{n+1}} Z\subset \bigcup_{Z' \in \Z^n} Z'$. We $\H^n$ be the set of intervals for which
$$\bigcup_{H \in \H^n} H \cup \bigcup_{Z \in \Z^{n}} Z=  \bigcup_{Z' \in \Z^{n-1}} Z' .$$
Note that for any $H \in \H^n$, $H \cap \Lambda=\emptyset$. Now we broadly follow the method from the previous section replacing $\Z^n$ by $\Z^n \cup \H^n$ for each level $1 \le n \le n_1$. Namely, at the first step instead of taking minimal unions of intervals from $\Z^1$ whose length exceeds $\delta/2$, we take minimal unions of intervals from $\Z^1 \cup \H^1$ whose length exceeds $\delta/2$. If the length of the union is less than $3\delta/2$ we add this interval to $U_1$ as before. If it exceeds $3\delta/2$ but the last interval added was an interval $H$ from $\H^1$, we simply take a smaller proportion of the interval $H$ so that the union has total length less than $3\delta/2$ and add this interval to $U_1$. Finally, analogously to before if the length of the union exceeds $3\delta/2$ and the last interval added was an interval $Z$ from $\Z^1$, then it is added to $V_1$. 

Let $$\beta=\frac{\tilde{\beta}c_m}{2C_m C_{bd}^s(n_1+1)(1+C_d)^3}.$$
At the next stage, for each set in $V^1$, we take the rightmost cylinder from $\Z^1$ contained in the set and consider it as a union of intervals belonging to $\Z^2 \cup \H^2$. We will take a union of these intervals whose length exceeds $\beta \delta/4$ and add it to $U_2$ if its length is less than $\beta\delta/2$, add it to $V_2$ if its length exceeds $\beta\delta/2$ and the last interval added was from $\Z^2$ and if its length exceeds $\beta\delta/2$ but the last interval added was from $\H^2$ then we simply remove the last part of this interval from the union so that the total union has length less than $\beta\delta/2$, and add it to $U_2$. 

Once the algorithm is finished, as before we end up with a partition $\U_\delta$ of $I$. After removing any intervals which do not intersect $\Lambda$ we obtain $\U_\delta'$ which satisfies \ref{Ub}-\ref{Ud}. In particular, the minimum length of any set in $\U_\delta'$ is $\frac{\delta}{2} (\beta/4)^{n_1}$.  Note that regarding \ref{Ud}, for any $U \in U_\delta'$ and $Z \in \Z^i$ ($1 \le i \le n_1$) where $U \cap Z \neq \emptyset$ and $Z$ is not a subset of $U$ we have
\begin{equation}
\frac{m(U \cap Z)}{m(Z)} \le \frac{\tilde{\beta}}{2(n_1+1)(1+C_d)^2}
\label{proportion}
\end{equation}
i.e. we have gained an extra factor of $1/2$.
However $\U_\delta'$ does not necessarily satisfy \ref{Ua} (we have upper and lower bounds on the diameter and since each $U \in \U_\delta'$ intersects $\Lambda$ then $\U_\delta$ satisfies the assumption involving $T$ in \ref{Ua} but not necessarily the assumption involving $t$ since for instance an interval in $\U_\delta$ may only intersect $\Lambda$ close to its boundary). We obtain a new family $\U_\delta''$ from $U_\delta'$ which additionally satisfies this property. For each $U \in \U_\delta'$ choose the leftmost point $x_U \in \Lambda\cap \U_\delta$ and fix an interval $[x_U-a_U,x_U+b_U]$ where 
$$ (\beta/4)^{n_1}\frac{\delta}{3}\le a_U,b_U \le (\beta/4)^{n_1}\frac{2\delta}{3}$$
and such that $[x_U-a_U,x_U+b_U]$ is equal to a union of $n_1+L_\delta$ cylinders. Let 
$$W_\delta=\{[x_U-a_U,x_U+b_U]\}.$$
 For any two intervals in $W_\delta$ whose interiors intersect we take their union, giving a new collection of closed intervals $W_\delta''$ with pairwise disjoint interiors. Finally we enlarge each interval in $W_\delta''$ so that their interiors remain pairwise disjoint, each interval intersects at most two intervals from $\U_\delta'$, each interval in $\U_\delta'$ is covered and so that any new interval formed is still given by a union of $n_1+L_\delta$ cylinders. We call this new collection $\U_\delta''$. \ref{Ua}-\ref{Uc} holds for $\U_\delta''$ by construction. \ref{Ud} holds since any $U \in \U_\delta''$ can contain at most two intervals from $U_\delta'$ and by \eqref{proportion}. \ref{Ue} holds since for each $U \in\U_\delta''$, $U \cap \Lambda$ is given by a finite or countable collection of $n_1+L_\delta$ cylinders.

\subsection{Proof of Proposition \ref{prop:u'}}

We now assume $f$ has at least 2 full branches. Choose $a, b \in \Sigma_1$ and $n_3 \in \N$ to satisfy Proposition \ref{prop:u'}(a) and define 
$$\V_\delta'=\left\{\Pi([wab^{n_3}]) \; : \; w \in \{a,b\}^* \; \textnormal{s.t.} \;\sup_x|Df_w^{-1}(x)|\le \delta < \sup_x|Df_{w^-}^{-1}(x)|\right\},$$
where $w^-$ denotes the word $w$ with the last digit removed and the suprema are taken over $\Lambda$ (since $f_a$ and $f_b$ are full branches). By definition, any two sets in $\V_\delta'$ can may intersect at most at one point, thus by removing at most half of the sets in $\V_\delta'$ we obtain a collection $\V_\delta'' \subset \V_\delta'$ of pairwise disjoint sets. Proposition \ref{prop:u'}(c) must be satisfied for $V_\delta''$ since the dimension of the repeller of $\{f_a,f_b\}$ is positive and each $V \in \V_\delta''$ has diameter $C_{bd}^{-1}\delta\le |V| \le C_{bd}^{-1}\delta$  by \eqref{bd}. To see Proposition \ref{prop:u'}(d) for $\V_\delta''$, note that there exists $t'>0$ and $x \in \Pi[ab^{n_3}]$ such that $B(x,t')\cap \Lambda \subset \Pi([ab^{n_3}])$. By \eqref{bd} given any $\Pi([wab^{n_3}]) \in \V_\delta''$, $B(f_w^{-1}(x), C_{bd} t'\delta) \cap \Lambda \subset \Pi([wab^{n_3}])$. The other part is similar.  Proposition \ref{prop:u'}(e) holds for $\V_\delta''$ provided $\delta_0$ is taken sufficiently small since we are only considering the finite branched system $\{f_a,f_b\}$. Proposition \ref{prop:u'}(f) holds for $\V_\delta''$ by uniform hyperbolicity. Proposition \ref{prop:u'}(g) holds for $\V_\delta'$ (therefore also for $\V_\delta''$) since if $\Pi([w_1ab^{n_3}]), \Pi([w_2ab^{n_3}]) \in \V_\delta'$ are distinct and $w_1ab^{n_3}$ is a subword of $w_2ab^{n_3}$ this contradicts that $\Pi([w_1ab^{n_3}]) \in \V_\delta'$ (the bounds on the derivative $Df_{w_1}^{-1}$ cannot hold). However, Proposition \ref{prop:u'}(b) may not hold for $\V_\delta''$ since  $\sup_x |D f_{w_0 \ldots w_n}^{-1}(x)|  \le \delta < \sup_x|Df_{w_0 \ldots w_{n-1}}^{-1}(x)|$ does not exclude the possibility that $\sup_x |D f_{w_1\ldots w_n}^{-1}(x)|  \le \delta < \sup_x|Df_{w_1 \ldots w_{n-1}}^{-1}(x)|$. However, if we let $k \in \N$ be sufficiently large that 
$$\frac{\inf_x\min\{Df_a^{-1}(x),Df_b^{-1}(x)\}}{\left(\sup_x\min\{Df_a^{-1}(x),Df_b^{-1}(x)\}\right)^k} <1$$
noting that this is possible since $f$ is uniformly expanding, then whenever $\sup_x |D f_{w_0 \ldots w_n}^{-1}(x)|  \le \delta < \sup_x|Df_{w_0 \ldots w_{n-1}}^{-1}(x)|$ we have $\sup_x|Df_{w_{k} \ldots w_{n}}^{-1}(x)|>\delta$. In particular $w_\ell \ldots w_n ab^{n_3} \notin \V_\delta''$ for any $\ell \ge k$. Therefore, we can find $\V_\delta \subset \V_\delta''$, where $\# \V_\delta \ge \frac{1}{k}\# \V_\delta''$, so that Proposition \ref{prop:u'}(b) holds for $\V_\delta$. Moreover, the remaining parts of Proposition \ref{prop:u'} also hold for $\V_\delta$. 


\end{document}